\journal{Journal of \LaTeX\ Templates}
\newtheorem{thm}{Theorem}[section]
\newtheorem{lem}[thm]{Lemma}
\newtheorem{exm}[thm]{Example}
\theoremstyle{definition}
\newtheorem{dfn}[thm]{Definition}
\theoremstyle{remark}
\begin{document}
\begin{frontmatter}
\title{Stability and synchronization of a fractional BAM neural network system of high-order   type}
%\tnotetext[mytitlenote]{Fully documented templates are available in the elsarticle package on \href{http://www.ctan.org/tex-archive/macros/latex/contrib/elsarticle}{CTAN}.}
%% Group authors per affiliation:
%% or include affiliations in footnotes:
\author[mymainaddress]{Sakina Othmani \corref{mycorrespondingauthor}}
\ead{sothmani@usthb.dz}

\author[mysecondaryaddress]{Nasser-eddine Tatar}
\cortext[mycorrespondingauthor]{Corresponding author}
\ead{tatarn@kfupm.edu.sa}

\address[mymainaddress]{Laboratory of SDG, Faculty of Mathematics, University of Science and Technology Houari Boumedienne, PB 32, El-Alia 16111, Bab Ezzouar, Algiers, Algeria}
\address[mysecondaryaddress]{King Fahd University of Petroleum and Minerals,
Department of Mathematics and Statistics,
Dhahran, 31261 Saudi Arabia}

\begin{abstract}
In this paper, stability and synchronization of a Caputo fractional BAM neural network system of high-order type and neutral delays are examined.  A mixture of properties of fractional calculus, Laplace transform, and analytical techniques is used to derive Mittag-Leffler stability and synchronization for two classes of activation functions. A fractional version of Halanay inequality is utilized to deal with the fractional character of the system and some suitable evaluations and handling  to cope with the higher order feature. Another feature is the treatment of unbounded activation functions.   Explicit  examples to validate the theoretical outcomes are shown at the end.
\end{abstract}

\begin{keyword}
Mittag-Leffler
stability \sep full synchronization \sep
higher-order \sep bidirectional associative memory \sep Caputo fractional derivative \sep  distributed delay \sep delay of neutral type
\MSC[2010] 92B20 \sep 93D20 \sep 26A33
\end{keyword}

\end{frontmatter}

%\linenumbers
\section{Introduction}
%Recently, artificial neural networks have been receiving great interest in science and engineering due to its operation in   predicting, classifying, and recognizing  patterns. Specifically, in biology, medicine, economics, and  oil and gas industry [1-4]. These networks are a branch of Artificial Intelligence. Their standard structure is summarized in three types of neural layers: an input layer, an output layer, and   some  hidden layers. The core unit of a neural  network is   artificial neurons connected by  artificial synapses to all the neurons in the neighboring layers. However, neurons in a single layer do not join.
%The connection weights related to these neurons can be adjusted in the learning stage. Moreover, the transfer functions may activate the neurons by the weights summation.

% The common recurrent neural networks encompass Hopfield neural networks,  Cohen-Grossberg neural networks,  bidirectional associative memory neural networks, Cellular neural networks and others.
%Kosko firstly extended the single-layer self-associated Hebbian correlator defining a novel class of networks, named  bidirectional associative memory (BAM) neural networks [5-7]. Its architecture is summarized in two interconnected hidden neural layers, while no connection exists between neurons in a single layer.
The synchronization process  involves the coherence between coupled systems over time.
 It can be in different forms: full synchronization, anti-synchronization, delayed synchronization, generalized synchronization, phase synchronization, projective synchronization or  finite time synchronization.  This  process is very effective in many  fields of technology such as  computer science \cite{A15,A16,A17,A18}.
 %\cite{A15, A18, A16,A17}.

%
%The synchronization word is derived from the Greek language that signifies the consistency of the different processes over time.
%Peroca and Carroll firstly proposed the chaotic synchronization [15]. This  performs effectively in the fields of engineering and mathematics [16,17].  It can be in different forms: full synchronization, anti-synchronization, delayed synchronization, generalized synchronization, phase synchronization, projective synchronization, finite time synchronization (see [18] and the references therein).

%Several types of synchronization exist: full synchronization, anti-synchronization, delay synchronization, generalized
%synchronization, phase synchronization,
%projective synchronization, synchronization in
%finite time (see [18] and references therein).

One of the most efficient neural networks system is bidirectional associative memory (BAM) neural networks system  introduced by Kosko in 1987.  They take the form of recurrent neural networks and widen the single-layer self-associated Hebbian correlator.
%They are structured with an input layer, an output layer and two hidden layers. Artificial neurons in each layer can be linked by artificial synapses to all neurons in the surrounding layers, although there is no binding between neurons in the same layer.
These networks are effectively used in  signal and image processing, pattern recognition, and optimization problems. With the aim of  describing  and modelling  the dynamics of complex neural reactions, the incorporation of information about the past state derivative in systems is necessary. This kind of delay is referred to as neutral delay \cite{A12,A13,A14}. One of the negative effects of delays on systems are  oscillations, divergences, chaos and bifurcations \cite{A11}.
On the other hand, owing to the higher approximation property, faster convergence rate,  greater
storage capacity and greater fault tolerance,  higher-order neural networks are more advantageous compared to the lower-order neural networks \cite{A8,A9,A10}.
% \cite{A8,A9,A10}.
%The combination of all these characteristics in the same model leads to significant challenges.
 Moreover, fractional derivatives  reflect the reliance of states on their past history, and therefore fractional models are capable of portraying adequately many complex  phenomena and processes \cite{AA3,AA2,AA1}.
 %\cite{AA1, AA2,AA3}.

In this paper, we consider the following fractional higher-order  BAM neural network with  distributed delays as well as   delays of neutral type
\begin{equation}\label{hfn1}
\left\{
\begin{array}{l}
D_{C}^{\delta}\Big[ x_{p}(t)-cx_{p}(t-\mu )\Big] =  -a_{p}x_{p}(t)
+\sum\limits_{q,s=1}^{n_{2}} d_{qps}\int_{0}^{\infty } k_{qps}(s)g_{q}( y_{q}(t-s)) ds \\
\times \int_{0}^{\infty
}h_{qps}(s) g_{s}( y_{s}(t-s)) ds + I_{p} ,\; t>0,\; p=1,...,n_{1}, \\
D_{C}^{\delta}\Big[ y_{q}(t)- \bar{c} y_{q}(t-\mu)\Big] =   -\bar{a}_{q} y_{q}(t)
+\sum\limits_{p, r =1}^{n_{1}} \bar{d}_{pqr}\int\nolimits_{0}^{%
\infty } \bar{k}_{pqr}(s) \bar{g}_{p}( x_{p}(t-s)) ds  \\
 \times \int\nolimits_{0}^{\infty
}\bar{h}_{pqr}(s)\bar{g}_{r}( x_{r}(t-s)) ds+ J_{q},\; t>0,\; q=1,...,n_{2}, \\
x_{p}(t)=\phi_{p}(t),\; t\leq 0,\;p=1,...,n_{1},  \\
y_{q}(t)=\varphi _{q}(t),\; t\leq 0,\;q=1,...,n_{2},
\end{array}
\right.
\end{equation}%
where $n_{1}+n_{2}$ is the number of neurons, $x_{p}$ and $y_{q}$
correspond to the state of the $i$-th neuron and the $j$-th neuron  at time $t$;  $a_{p}>0$ and $\bar{a}_{q}>0$ denote the dissipation
coefficients; $c>0$ and $\bar{c}>0$ represent the coefficients of the temporal derivation of the lagged states;  $d_{qps}$ and $\bar{d}_{pqr}$ account for  the second order parameters; $g_{q}$ and $\bar{g}_{p}$ are   the activation functions;
 $\mu$
 corresponds to the neutral delays; $k_{qps}, h_{qps}, \bar{k}_{pqr}$ and  $\bar{h}_{pqr}$
represent the distributed delay kernels; $I_{p}$ and $J_{q}$ refer to
external inputs; finally, $\phi_{p}$ and $\varphi _{q}$ are   the history functions of
the $p$-th and the $q$-th  state.

Numerous investigations have been carried out on fractional neural network systems.
%In order to create neural networks, the determination of the asymptotic behaviour of solutions is a matter of great interest. This is why, %examining the stability of fractional neural networks has recently emerged as an intriguing area of research.
In \cite{A21,A19,A20}, stability and synchronization of  the following fractional Hopfield NN system were examined  %\cite{A19,A20,A21}
\begin{equation*}
\left\{
\begin{array}{l}
_{0}D^{\gamma}_{t} w_{k}(t)= -\alpha_{k} w_{k}(t)+ \sum\limits_{l=1}^{p} \beta_{kl}h_{l}( w_{l}(t))+\chi_{k}, \; k=1, 2, ..., p,\\
w_{k}(0)=w_{k0}.
\end{array}
\right.
\end{equation*}
Both Mittag-Leffler stability of the equilibrium  and   synchronization  via linear feedback controls were proved, utilizing an extended second method of Lyapunov for Lipschitz continuous activation functions in \cite{A19}. In contrast, for discontinuous activation functions,   asymptotic stability results were investigated  in \cite{A20}, whilst Mittag-Leffler synchronization was discussed in \cite{A21} through linear feedback  controls. The authors in \cite{A21,A20} used  Fillipov theory, Laplace transform technique and fractional differential inequalities.
Furthermore,
fractional-order bidirectional associative memory (BAM) neural networks  with delays were treated in \cite{A25,A22,AA30,A30,A24,A27,A26,A28,A23,A29}. %\cite{A22,A23,A24,A25,A26,A27,A28,A29,A30}. %The authors in [22-26]  treated the well-posedness, stability, and synchronization
In \cite{A22}, sufficient conditions were established ensuring  the  asymptotic stability for  a fractional BAM NN with leakage delays. Yang et al.  \cite{A23} discussed the asymptotic stability of a fractional BAM NN with discrete delays via a fractional inequality for Lipschitz continuous activation functions.  The following fractional BAM NN with delays depending of time  was studied in \cite{A24}
\begin{equation*}
\left\{
\begin{array}{l}
{}^{c}_{0}D^{\gamma}_{t} z_{k}(t)= -\alpha_{k} z_{k}(t)+ \sum\limits_{l=1}^{q} \beta_{kl} h_{l}( w_{l}(t))+ \sum\limits_{l=1}^{q} \delta_{kl} h_{l}(w_{l}(t- \mu_{kl}(t)))+\chi_{k}, \; k=1, 2, ..., p,\\
{}^{c}_{0}D^{\gamma}_{t} w_{l}(t)= -\bar{\alpha}_{l} w_{l}(t)+ \sum\limits_{k=1}^{p} \bar{\beta}_{lk}\bar{h}_{k}(z_{k}(t))+\sum\limits_{k=1}^{p} \bar{\beta}_{lk} \bar{h}_{k}(z_{k}(t- \nu_{lk}(t))+\bar{\chi}_{l}, \; l=1, 2, ..., q,
\end{array}
\right.
\end{equation*}
where the asymptotic stability of the stationary state was achieved  through a fractional inequality and Lyapunov functionals.  In \cite{A25}, Cao and Bai proved the stability for a fractional BAM NN with distributed delays using the  M-matrix theory, Laplace transform and some inequalities such as   Gronwall inequality. Besides, the stability for fractional BAM NN with delays and impulses were studied in \cite{A27,A26}, considering Lipschitz continuous activation functions and linear impulsive operators.  Moreover, in \cite{A28}, a new generalized Gronwall inequality was proved to examine the stability of Mittag-Leffler kind for  a fractional BAM NN model, whilst,  Mittag-Leffler synchronization results through delayed controls were shown in \cite{A29}.

In this paper, we examine the  Mittag-Leffler stability and synchronization for both bounded and unbounded activation functions.  Apart from H\"{o}lder continuous functions, no works have been reported on unbounded activation functions so far.
For this aim we shall utilize, in addition to these techniques, a new Halanay inequality of fractional order with neutral delay and distributed delay for a wide family of delay kernels defined by an integral condition. The unbounded case is more problematic as it requires more skills. In particular, it requires appropriate manipulations and adequate estimations.

%As well, new arguments are given based on  properties of fractional calculus, Lyapunov functionals, Laplace transform, and analytical techniques.

%The higher-order term cannot be linear after using the Lipschitz condition on activation functions for the unbounded case, which makes it hard to %process. Moreover, neutral delays that describe the history of the derived state are complicated to handle.

%All these features in the same model have created several challenges. Moreover, for such model, the unbounded  case of activation functions have %not been treated yet in the literature.

This paper is arranged as follows: In Section 2, we present some notation, definitions, and lemmas. Mittag-Leffler stability of the equilibrium is  discussed for the bounded and unbounded  cases in Section 3 and  Section 4, respectively. The synchronization result  is examined in Section 5. Finally, numerical illustrations are provided  to confirm the findings in Section 6.

\section{Preliminaries}
This section contains  some specific  assumptions, definitions of  fractional derivatives and lemmas.
%Through this paper, we assume that
 For the sake of brevity, we shall omit the ranges of indexes. In all our assumptions and statements, it is understood that the indexes
 $p, r$ and $q, s$ range from $1$ to $n_{1}$ and from $1$ to $n_{2}$, respectively.

\textbf{(A1)} The delay kernel functions $k_{qps}, h_{qps}, \bar{k}_{pqr}$ and $\bar{h}_{pqr}$ are
piecewise continuous and  non-negative such that
\begin{equation*}
\begin{array}{c}
% \nonumber to remove numbering (before each equation)
  \hat{k}_{qps}=\int\nolimits_{0}^{\infty } k_{qps} ( s ) ds < \infty ,\;
\hat{h}_{qps}=\int\nolimits_{0}^{\infty }h_{qps}( s ) ds < \infty ,\\
  \hat{\bar{k}}_{pqr}=\int\nolimits_{0}^{\infty } k_{pqr} ( s ) ds < \infty ,\;
\hat{\bar{h}}_{pqr}=\int\nolimits_{0}^{\infty }\bar{h}_{pqr}( s ) ds < \infty.
\end{array}
\end{equation*}

\textbf{(A2)} The functions $ g_{q}$  and $\bar{g}_{p}$  satisfy for some constants $G, \bar{G}>0$
\begin{equation*}
  | g_{q}(x) | \leq G , \;
 | \bar{g}_{p}(x) | \leq \bar{G} ,\; x \in \mathbb{R}.
\end{equation*}%

\textbf{(A3) }The functions $ g_{q}$ and $\bar{g}_{p}$ are Lipschitz continuous
on $\mathbb{R}$ with Lipschitz constants $L_{q}$ and $M_{p}$ such that
\begin{equation*}
\begin{array}{c}
|g_{q}(x)-g_{q}(y)|\leq L_{q}|x-y|, \quad
|\bar{g}_{p}(x)-\bar{g}_{p}(y)|\leq M_{p}|x-y|, \;\forall x,y\in \mathbb{R}.
\end{array}
\end{equation*}

\begin{dfn}
The point  $(x_{p}^{*}, y_{q}^{*})$ is an equilibrium of the system (\ref{hfn1}), if
\begin{equation*}
\begin{array}{c}
0=-a_{p}x_{p}^{*}
+\sum\limits_{q,s=1}^{n_{2}} d_{qps} \hat{k}_{qps}g_{q}(
y_{q}^{* }) \hat{h}_{qps} g_{s}( y_{s}^{*})
+I_{p}, \\
0=-\bar{a}_{q} y_{q}^{*}
+\sum\limits_{p, r =1}^{n_{1}}\bar{d}_{pqr} \hat{\bar{k}}_{pqr} \bar{g}_{p}(
x_{p}^{* }) \hat{\bar{h}}_{pqr} \bar{g}_{r}( x_{r}^{*})
+J_{q}.
\end{array}
\end{equation*}%
\end{dfn}
According to the uniform boundedness \textbf{(A2) }and Lipschitz continuity \textbf{(A3)}, there exists  a unique equilibrium $(x_{p}^{*}, y_{q}^{*})$ \cite{AA30}.

%Notice that $u_{p}(t)=x_{p}(t)-x_{p}^{*}, \; v_{q}(t) = y_{q}(t)-y_{q}^{* }, \;
%u(t)=\sum\limits_{i=1}^{n_{1}} | u_{p}(t) | , \; v(t)=\sum\limits_{j=1}^{n_{2}} | v_{q}(t) |$.

\begin{dfn}
For any measurable function $f$, the Riemann-Liouville fractional integral of order $%
\delta>0$ is equal to
\begin{equation*}
I^{\delta}f(t)=\frac{1}{\Gamma (\delta )}\int\limits_{0}^{t}(t-s)^{\delta
-1}f(s)ds,\; \delta >0
\end{equation*}%
 if the integral term can be found.
%in case  the integral term exists.
Notice that  $\Gamma (\delta )$\ is the  Gamma function.
\end{dfn}
\begin{dfn}  The Caputo fractional derivative of order $\delta $ is given  by
\begin{equation*}
D_{C}^{\delta}f(z)=\frac{1}{\Gamma (1-\delta)}\int\limits_{0}^{z}(z-w)^{-\delta }f^{\prime }(w)dw,\;0<\delta<1
\end{equation*}%
 if the integral term can be found.\\
%provided that the integral exists.\\
The one-parametric and two-parametric Mittag-Leffler functions are expressed  by%
\begin{equation*}
E_{\delta }(w):=\sum\limits_{\kappa=0}^{\infty }\frac{w^{\kappa}}{\Gamma (\delta
\kappa+1)},\; Re(\delta )>0,
\end{equation*}%
and%
\begin{equation*}
E_{\delta ,\rho }(w):=\sum\limits_{\kappa=0}^{\infty }\frac{w^{\kappa}}{\Gamma
(\delta \kappa+\rho )},\; Re(\delta )>0,\; Re(\rho)>0,
\end{equation*}%
respectively, with the remark that  $E_{\delta ,1}(w)\equiv E_{\delta }(w)$.
\end{dfn}
%\begin{lem} \cite{A30}
%For $\rho, \varrho, a >0$, we have
%$$y^{1-\rho} \int_{0}^{y} (y-\xi)^{\rho-1} \xi^{\varrho-1} e^{-a \xi} d \xi \leq a^{-\varrho} \max(1, 2^{1-\rho}) \Gamma(\varrho) (1+ \frac{\varrho}{\rho}), \quad y >0.$$
%\end{lem}
\begin{lem}\cite{A31}
For $\sigma, \gamma, \beta >0$, we have
\begin{equation}\label{formula1}
I^{\sigma} t^{\gamma-1} E_{\alpha, \beta} (c t^{\beta})(x)= x^{\sigma + \gamma-1} E_{\beta, \sigma+\gamma}(c x^{\beta}).
\end{equation}
\end{lem}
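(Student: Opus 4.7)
The plan is to prove the identity by expanding the Mittag-Leffler function as its defining power series, applying the Riemann--Liouville fractional integral term by term, and then recognizing the resulting sum as another two-parameter Mittag-Leffler function. Concretely, I will substitute
$$t^{\gamma-1} E_{\alpha,\beta}(ct^{\beta}) = \sum_{k=0}^{\infty} \frac{c^{k}\, t^{\beta k + \gamma - 1}}{\Gamma(\alpha k + \beta)},$$
and then reduce the problem to computing $I^{\sigma}$ on pure monomials $t^{\mu-1}$. The positivity assumptions $\sigma, \gamma, \beta > 0$ are exactly what is needed so that each monomial $t^{\beta k + \gamma - 1}$ is locally integrable at the origin, making $I^{\sigma}$ well-defined on every summand.

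The heart of the calculation is the classical fractional power rule
$$I^{\sigma}\, t^{\mu-1}(x) = \frac{\Gamma(\mu)}{\Gamma(\mu + \sigma)}\, x^{\mu + \sigma - 1}, \qquad \mu, \sigma > 0,$$
which follows from the definition of $I^\sigma$ by the change of variables $s = xu$ and the Beta integral $\int_{0}^{1}(1-u)^{\sigma-1} u^{\mu-1}\,du = \Gamma(\mu)\Gamma(\sigma)/\Gamma(\mu+\sigma)$. Applying this with $\mu = \beta k + \gamma$ and summing over $k$, the $\Gamma(\beta k + \gamma)$ factors combine with the denominators from the series to yield
$$x^{\sigma + \gamma - 1}\sum_{k=0}^{\infty}\frac{(cx^{\beta})^{k}}{\Gamma(\beta k + \sigma + \gamma)} = x^{\sigma + \gamma - 1} E_{\beta,\, \sigma + \gamma}(cx^{\beta}),$$
which is exactly the claimed right-hand side.

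The main technical point, and the only real obstacle, is justifying the interchange of the Riemann--Liouville integral with the infinite sum. I plan to handle this by dominated convergence: on a fixed interval $[0,x]$, the kernel $(x-s)^{\sigma-1}/\Gamma(\sigma)$ is integrable, while the partial sums of the series above are dominated uniformly in $s \in [0,x]$ by $s^{\gamma-1}$ times a standard Mittag-Leffler entire-function bound in $|c|s^{\beta}$. Both factors are integrable on $[0,x]$ (the singularity at $0$ is controlled by $\gamma > 0$, and the Mittag-Leffler bound is continuous hence bounded on the compact interval), so Lebesgue's theorem licenses the termwise integration. Once this exchange is in place, the remainder of the argument is the bookkeeping of gamma functions outlined above.
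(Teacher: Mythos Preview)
The paper does not prove this lemma; it is quoted from \cite{A31} and used as a black box (see the derivation of (\ref{g1})). Your term-by-term integration argument is the standard proof and is correct in spirit, so there is nothing to compare against.

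One point you should make explicit, however: the statement as printed contains a typographical mismatch. With the left-hand side written as $t^{\gamma-1}E_{\alpha,\beta}(ct^{\beta})$, the series has denominators $\Gamma(\alpha k+\beta)$, whereas the fractional power rule produces numerators $\Gamma(\beta k+\gamma)$. These cancel only if $\alpha=\beta$ and the second Mittag--Leffler parameter equals $\gamma$, i.e.\ the intended left-hand side is $t^{\gamma-1}E_{\beta,\gamma}(ct^{\beta})$. Your sentence ``the $\Gamma(\beta k+\gamma)$ factors combine with the denominators from the series'' silently assumes this correction; you should state it outright, since as literally written the identity is false. The paper's own application of the formula (with $\sigma=1-\delta$, $\gamma=\beta=\delta$, giving $I^{1-\delta}[t^{\delta-1}E_{\delta,\delta}(-\xi t^{\delta})]=E_{\delta}(-\xi t^{\delta})$) confirms that $E_{\beta,\gamma}$ is what is meant.
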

\textbf{Mainardi's conjecture:} \cite{A32} For any $t>0$ and
fixed $\delta,$ $0<\delta <1,$ we get
\begin{equation} \label{mc1}
\frac{1}{1+c\Gamma (1-\delta )t^{\delta}}\leq E_{\delta }(-ct^{\delta
})\leq \frac{1}{1+c\Gamma (1+\delta)^{-1}t^{\delta}},\;t\geq 0.
\end{equation}%
This conjecture has been proved in  \cite{A33,A34}.

\begin{dfn} The  solution $u(t)$\
is  globally $\delta $-Mittag-Leffler stable ($0<\delta<1$) if for some  constants $\Lambda, \lambda >0$
\begin{equation*}
\left\Vert u(t)\right\Vert \leq \Lambda E_{\delta}(-\lambda t^{\delta}),\;t>0
\end{equation*}%
for a prescribed  norm $\left\Vert .\right\Vert.$  A local $\delta$-Mittag-Leffler
stability is provided for small data.
%
%If $0$\ is an equilibrium, then the solution $u(t)$\
%is said to be globally $\delta $-Mittag-Leffler stable ($0<\delta<1$) if
%there exist two positive constants $\Lambda$\ and $\lambda $\ such that%
%\begin{equation*}
%\left\Vert u(t)\right\Vert \leq \Lambda E_{\alpha }(-\lambda t^{\delta}),\;t>0
%\end{equation*}%
%for a prescribed  norm $\left\Vert .\right\Vert .$ In case of validity  for small
%initial data only, then we say that we have local $\delta$-Mittag-Leffler
%stablity.
\end{dfn}

The result below has been proved in \cite{A35}. We report it here with its proof for self-containedness.
\begin{lem}\cite{A35}
Assuming  that  $y(t)$ is a solution of
\begin{equation}\label{st1}
\left\{
\begin{array}{l}
D_{C}^{\gamma} \Big[y(t)-c y(t-\mu)\Big] \leq - r y(t) +\int_{0}^{\infty} h(s) y(t-s) ds, \; 0< \gamma < 1, \; t, c, \mu>0, \\
y(t)= \phi(t)\geq 0, \; t \leq 0,
\end{array}
\right.
\end{equation}
with $r>0$ and $h$ is a nonnegative summable function. If $c>0$ and $h$ are such as
$$\int_{0}^{t} (t-s)^{\gamma-1} E_{\gamma, \gamma}(-r(t-s)^{\gamma}) \Big(  \int_{-\infty}^{s} E_{\gamma}(- r \lambda^{\gamma}) h(s-\lambda) d \lambda \Big) ds \leq M E_{\gamma}(-r t^{\gamma}), \; t>0$$
hold for some $M>0$ with
$$M < 1 - \Big[  1+ \Gamma(1+\gamma) \Gamma(1-\gamma) \Big]Vc, \;  \Big[  1+ \Gamma(1+\gamma) \Gamma(1-\gamma) \Big]Vc< 1, \; V:= \frac{1}{r \mu^{\gamma}} +2^{\gamma} \Gamma(1-\gamma).$$
Then, for some  constant $\Lambda>0$
$$y(t) \leq \Lambda E_{\gamma}(-rt^{\gamma}), \; t>0. $$
\end{lem}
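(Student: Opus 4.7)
My plan is to linearize the neutral structure and then run a fractional Duhamel/bootstrap argument against $E_\gamma(-rt^\gamma)$. Set $z(t):=y(t)-cy(t-\mu)$; the hypothesis rewrites as
\begin{equation*}
D_{C}^{\gamma} z(t) \leq -rz(t) - rc\,y(t-\mu) + \int_{0}^{\infty} h(s)\,y(t-s)\,ds.
\end{equation*}
Because the kernel $s^{\gamma-1}E_{\gamma,\gamma}(-rs^\gamma)$ is nonnegative (complete monotonicity of $E_{\gamma,\gamma}(-\cdot)$), a fractional comparison principle applied to this linear inequality yields the Duhamel-type bound
\begin{equation*}
z(t) \leq z(0)\,E_\gamma(-rt^\gamma) + \int_{0}^{t} (t-s)^{\gamma-1} E_{\gamma,\gamma}(-r(t-s)^\gamma)\Bigl[-rc\,y(s-\mu) + \int_{0}^{\infty} h(u)y(s-u)\,du\Bigr]ds.
\end{equation*}
Using $y(t)=z(t)+cy(t-\mu)$ converts this into an inequality for $y(t)$ whose right-hand side involves $y(t-\mu)$, a convolution of $y(\cdot-\mu)$ against the Mittag-Leffler kernel, and the distributed-delay term.

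Next I would bootstrap: assume $y(\tau)\leq\Lambda E_\gamma(-r\tau^\gamma)$ for all $\tau<t$ and propagate to $t$ for suitably large $\Lambda$. Three nontrivial ingredients are required. First, Mainardi's two-sided bound gives, for $t\geq\mu$, the ratio estimate $E_\gamma(-r(t-\mu)^\gamma)\leq V\,E_\gamma(-rt^\gamma)$, where the constant $V=(r\mu^\gamma)^{-1}+2^\gamma\Gamma(1-\gamma)$ arises from a case split at $t=2\mu$: on $[\mu,2\mu]$ one only needs $E_\gamma(-r(t-\mu)^\gamma)\leq 1$ combined with the Mainardi lower bound on $E_\gamma(-rt^\gamma)$ (producing the $(r\mu^\gamma)^{-1}$ piece), while on $[2\mu,\infty)$ one uses $(t-\mu)^\gamma\geq(t/2)^\gamma$ together with the two-sided Mainardi estimates (producing the $2^\gamma\Gamma(1-\gamma)$ piece). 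Second, after the substitution $u=t-s$ and another application of Mainardi, the self-convolution satisfies
\begin{equation*}
\int_{0}^{t} (t-s)^{\gamma-1} E_{\gamma,\gamma}(-r(t-s)^\gamma)\,E_\gamma(-rs^\gamma)\,ds \leq \frac{\Gamma(1+\gamma)\Gamma(1-\gamma)}{r}\,E_\gamma(-rt^\gamma),
\end{equation*}
so the convolution term carrying $-rc\,y(s-\mu)$ is controlled by $Vc\,\Gamma(1+\gamma)\Gamma(1-\gamma)\,\Lambda\,E_\gamma(-rt^\gamma)$. Third, for the distributed-delay integral, the change of variable $\lambda=s-u$ produces $\int_{-\infty}^{s} h(s-\lambda)y(\lambda)\,d\lambda$; the inductive assumption bounds $y(\lambda)\leq\Lambda E_\gamma(-r\lambda^\gamma)$ on $[0,s]$, and the history piece is absorbed into the data. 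Plugging into the outer integral and invoking the kernel hypothesis then bounds this whole term by $\Lambda M\,E_\gamma(-rt^\gamma)$.

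Combining these estimates with the direct contribution $cy(t-\mu)\leq c\Lambda V\,E_\gamma(-rt^\gamma)$ collapses the inequality to
\begin{equation*}
y(t) \leq \Bigl(C_{0} + \Lambda\bigl[M + Vc(1+\Gamma(1+\gamma)\Gamma(1-\gamma))\bigr]\Bigr)E_\gamma(-rt^\gamma),
\end{equation*}
with $C_{0}$ depending only on the initial data and on $\phi$. The induction closes for any $\Lambda\geq C_{0}/\bigl(1-M-Vc(1+\Gamma(1+\gamma)\Gamma(1-\gamma))\bigr)$, a positive finite number precisely under the two stated hypotheses. The main obstacle is the sharp derivation of the two Mittag-Leffler estimates, the ratio estimate producing the constant $V$ and the self-convolution estimate producing the constant $\Gamma(1+\gamma)\Gamma(1-\gamma)/r$; these are not standard and require careful case splits together with a combined use of the Mainardi lower and upper bounds. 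The combinatorial bookkeeping that delivers exactly the factor $1+\Gamma(1+\gamma)\Gamma(1-\gamma)$ — one $1$ from the direct $cy(t-\mu)$ term and one $\Gamma(1+\gamma)\Gamma(1-\gamma)$ from the convolution — is essential to match the sharp threshold in the hypothesis.
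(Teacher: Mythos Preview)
Your overall strategy---rewrite the neutral inequality, pass to the Duhamel representation, and close a bootstrap against $E_\gamma(-rt^\gamma)$ using the kernel hypothesis for the distributed term, the self-convolution estimate for the $-rc\,y(s-\mu)$ term, and a ratio bound for the direct $cy(t-\mu)$ term---is exactly the paper's. The self-convolution bound $\int_0^t (t-s)^{\gamma-1}E_{\gamma,\gamma}(-r(t-s)^\gamma)E_\gamma(-rs^\gamma)\,ds\le \Gamma(1+\gamma)\Gamma(1-\gamma)r^{-1}E_\gamma(-rt^\gamma)$ and the bookkeeping that produces the factor $1+\Gamma(1+\gamma)\Gamma(1-\gamma)$ are also identical to what the paper does.

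There is, however, a genuine gap in your ratio estimate. You claim $E_\gamma(-r(t-\mu)^\gamma)\le V\,E_\gamma(-rt^\gamma)$ for all $t\ge\mu$, and you attribute the piece $(r\mu^\gamma)^{-1}$ to the interval $[\mu,2\mu]$. That derivation is wrong: on $[\mu,2\mu]$, $E_\gamma(-r(t-\mu)^\gamma)\le 1$ together with Mainardi's lower bound gives only $E_\gamma(-rt^\gamma)^{-1}\le 1+r\Gamma(1-\gamma)(2\mu)^\gamma=:B$, not $(r\mu^\gamma)^{-1}$. The constant $V$ arises \emph{entirely} on $[2\mu,\infty)$, where both summands of $V$ come from the two-sided Mainardi bound combined with $(t-\mu)\ge\mu$ and $t/(t-\mu)\le 2$. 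Since $B>V$ whenever $r\mu^\gamma>1$, your single-step continuation argument cannot close on $[\mu,2\mu]$ under the stated hypothesis: at $t\in[\mu,2\mu]$ the inductive bound on $y(t-\mu)$ feeds back $cB\bigl(1+\Gamma(1+\gamma)\Gamma(1-\gamma)\bigr)\Lambda$, and you would need $M+cB\bigl(1+\Gamma(1+\gamma)\Gamma(1-\gamma)\bigr)<1$, which is strictly stronger than the lemma's assumption.

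The paper circumvents this by an interval-by-interval induction on $[(k-1)\mu,k\mu]$: the first two steps ($k=1,2$) are worked out by hand and absorb the constant $B$ into a fixed prefactor $3By_0$; only from $k\ge 3$ does the ratio bound $V$ enter the recursion, producing a geometric series $\sum_l (VWc)^l$ with $W=\bigl(1+\Gamma(1+\gamma)\Gamma(1-\gamma)\bigr)/(1-M)$. Convergence of that series is \emph{exactly} the condition $M<1-\bigl[1+\Gamma(1+\gamma)\Gamma(1-\gamma)\bigr]Vc$. So the stepwise induction is not cosmetic: it is what decouples the transient constant $B$ (prefactor) from the asymptotic constant $V$ (geometric ratio), and this separation is precisely what lets the sharp threshold involve $V$ rather than $\max(B,V)$. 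To repair your argument you must treat the first two $\mu$-intervals separately before launching the bootstrap, at which point it becomes the paper's proof.
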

\begin{proof} Let $|\phi(s) | < y_{0} E_{\gamma}(-r (s+\mu)^{\gamma}), \; s\in [- \mu, 0], \; y_{0}>0.$ It is obvious that for $0< c <1$, we obtain the expression
\begin{equation*}
\begin{array}{c}
y(t) - c y(t-\mu) = E_{\gamma}(-r t^{\gamma}) \Big[  \phi(0)- c \phi(-\mu) \Big]\\ +\int_{0}^{t} (t-s)^{\gamma-1} E_{\gamma, \gamma}(-r (t-s)^{\gamma})\Big(  -rc y(s -\mu)+\int_{0}^{\infty} h(\sigma) y(s-\sigma) d\sigma \Big) ds, \; t>0.
\end{array}
\end{equation*}
Then
\begin{equation}\label{sm1}
\begin{array}{c}
|y(t)| \leq 2 y_{0} E_{\gamma}(-r t^{\gamma}) +c |y(t-\mu) |+ r c  \int_{0}^{t} (t-s)^{\gamma-1} E_{\gamma, \gamma}(-r (t-s)^{\gamma}) |y(s-\mu) | ds \\+\int_{0}^{t} (t-s)^{\gamma-1} E_{\gamma, \gamma}(-r (t-s)^{\gamma})\Big( \int_{0}^{\infty} h(\sigma)|y(s-\sigma) | d \sigma\Big) ds, \ t>0.
\end{array}
\end{equation}
For $t\in [0, \mu],$
\begin{equation}\label{smm1}
\begin{array}{c}
\frac{|y(t)|}{E_{\gamma}(-rt^{\gamma})} \leq 3 y_{0}  + \frac{r c y_{0}}{E_{\gamma}(-rt^{\gamma})} \int_{0}^{t} (t-s)^{\gamma-1} E_{\gamma, \gamma}(-r (t-s)^{\gamma}) E_{\gamma}(-r s^{\gamma}) ds +M \sup\limits_{-\infty < \sigma\leq t } \frac{|y(\sigma) |}{E_{\gamma}(-r \sigma^{\gamma})}.
\end{array}
\end{equation}
Again, as
\begin{equation}\label{sm2}
\begin{array}{c}
 \int_{0}^{t} (t-s)^{\gamma-1} E_{\gamma, \gamma}(-r (t-s)^{\gamma}) E_{\gamma}(-r s^{\gamma}) ds\\
 \leq \frac{\Gamma(1+\gamma)}{r}  \int_{0}^{t} (t-s)^{\gamma-1} E_{\gamma, \gamma}(-r (t-s)^{\gamma}) s^{-\gamma}ds\\
 \leq \frac{\Gamma(1+\gamma)\Gamma(1-\gamma)}{r}E_{\gamma,1}(-r t^{\gamma}),
\end{array}
\end{equation}
we may write
$$\frac{|y(t)|}{E_{\gamma}(-rt^{\gamma})} \leq 3 y_{0}  + c y_{0} \Gamma(1+\gamma)\Gamma(1-\gamma) +M \sup\limits_{-\infty < \sigma\leq t } \frac{|y(\sigma) |}{E_{\gamma}(-r \sigma^{\gamma})}$$
or
\begin{equation}\label{sm3}
(1-M)\frac{|y(t)|}{E_{\gamma}(-rt^{\gamma})} \leq \Big[ 3 + c  \Gamma(1+\gamma)\Gamma(1-\gamma)\Big]y_{0}.
\end{equation}
In case $t\in[\mu, 2 \mu ]$, we first notice that
\begin{equation}\label{sm6}
\begin{array}{c}
|y(t-\mu)| \leq \frac{ 3 + c  \Gamma(1+\gamma)\Gamma(1-\gamma)}{1-M}\frac{E_{\gamma}(-r (t-\mu)^{\gamma})}{E_{\gamma}(-r t^{\gamma})} y_{0} E_{\gamma}(-rt^{\gamma})\\
\leq \frac{ 3 + c  \Gamma(1+\gamma)\Gamma(1-\gamma)}{1-M} B y_{0} E_{\gamma}(-r t^{\gamma})
\end{array}
\end{equation}
where
%Here, the expression $E_{\gamma}(-r (t-\mu)^{\gamma})/ E_{\gamma}(-r t^{\gamma})$ is dealt with as follows:
\begin{equation}\label{st8}
\begin{array}{c}
\frac{E_{\gamma}(-r (t-\mu)^{\gamma})}{E_{\gamma}(-r t^{\gamma})} \leq \frac{1}{E_{\gamma}(-r t^{\gamma})}\leq \frac{1}{E_{\gamma}(-r (2\mu)^{\gamma})}\leq 1+ r \Gamma(1-\gamma) (2\mu)^{\gamma}:= B.
\end{array}
\end{equation}
Using the relations (\ref{sm1}) and (\ref{sm6}), we entail
\begin{equation*}
\begin{array}{c}
|y(t)| \leq 2 y_{0} E_{\gamma}(-r t^{\gamma}) + c B y_{0} \frac{ 3 + c \Gamma(1+\gamma)\Gamma(1-\gamma)}{1-M} E_{\gamma}(-r t^{\gamma}) \\+ r c B y_{0} \frac{ 3 + c  \Gamma(1+\gamma)\Gamma(1-\gamma)}{1-M}  \int_{0}^{t} (t-s)^{\gamma-1} E_{\gamma, \gamma}(-r (t-s)^{\gamma}) E_{\gamma}(-r s^{\gamma}) ds\\
 +\int_{0}^{t} (t-s)^{\gamma-1} E_{\gamma, \gamma}(-r (t-s)^{\gamma})\Big( \int_{0}^{\infty} h(\sigma)|y(s-\sigma) | d \sigma\Big) ds
\end{array}
\end{equation*}
Next, we apply (\ref{sm2}), to obtain
\begin{equation*}
\begin{array}{c}
|y(t)| \leq 2 y_{0} E_{\gamma}(-r t^{\gamma}) +c B y_{0}\frac{3+c \Gamma(1+\gamma)\Gamma(1-\gamma)}{1-M} E_{\gamma}(-r t^{\gamma}) + r c B y_{0} \frac{ 3 + c  \Gamma(1+\gamma)\Gamma(1-\gamma)}{1-M} \frac{  \Gamma(1+\gamma)\Gamma(1-\gamma)}{r} E_{\gamma}(-r t^{\gamma})\\
 +\int_{0}^{t} (t-s)^{\gamma-1} E_{\gamma, \gamma}(-r (t-s)^{\gamma})\Big( \int_{0}^{\infty} h(\sigma)|y(s-\sigma) | d \sigma\Big) ds
\end{array}
\end{equation*}
or
\begin{equation}\label{sm4}
\begin{array}{c}
(1-M)\frac{|y(t)|}{E_{\gamma}(-r t^{\gamma})} \leq 2 y_{0}+ c B y_{0}\Big[1+\Gamma(1+\gamma)\Gamma(1-\gamma)\Big]
 \frac{3 +c\Gamma(1+\gamma)\Gamma(1-\gamma)}{1-M}\\
 \leq 2y_{0}+3 B y_{0}\frac{[1+\Gamma(1+\gamma)\Gamma(1-\gamma)]}{1-M}c+3B y_{0} \frac{[1+\Gamma(1+\gamma)\Gamma(1-\gamma)]^{2}}{1-M}c^{2}.
\end{array}
\end{equation}
For $t\in [2 \mu, 3\mu]$, in view of the estimation $\frac{t^{\gamma}}{(t-\mu)^{\gamma}}\leq 2^{\gamma}$ and the relations (\ref{mc1}), we obtain
\begin{equation}\label{st110}
\begin{array}{c}
\frac{E_{\gamma}(-r (t-\mu)^{\gamma})}{ E_{\gamma}(-r t^{\gamma})} \leq \frac{1+ r \Gamma(1-\gamma) t^{\gamma}}{1+ r \Gamma(1+\gamma)^{-1}(t-\mu)^{\gamma}}\leq \frac{\Gamma(1+\gamma)}{r \mu^{\gamma}} + \frac{\Gamma(1+\gamma)\Gamma(1-\gamma)t^{\gamma}}{(t-\mu)^{\gamma}}\\
\leq \Gamma(1+\gamma) \Big[\frac{1}{r \mu^{\gamma}}  +2^{\gamma} \Gamma(1-\gamma) \Big]\\
\leq \frac{1}{r \mu^{\gamma}}+2^{\gamma}\Gamma(1-\gamma)=:V>1.
\end{array}
\end{equation}
Notice that (\ref{st110}) holds for all $t \geq 2 \mu$ and, as $\Gamma(1+ \gamma)$ is very close to (and below) than $1$, we may ignore it.

Therefore (\ref{sm1}) implies
\begin{equation*}
\begin{array}{c}
|y(t)| \leq 2 y_{0} E_{\gamma}(-r t^{\gamma}) + \frac{c V}{1-M}\Big[  2 y_{0}  + 3 B y_{0}Wc + 3 B y_{0} W^{2}(1-M)c^{2}\Big]
E_{\gamma}(-r t^{\gamma})\\ + cV \frac{  \Gamma(1+\gamma)\Gamma(1-\gamma)}{1-M} \Big[ 2 y_{0}  + 3 B y_{0}W c + 3 B y_{0} W^{2}(1-M)c^{2} \Big]E_{\gamma}(-r t^{\gamma})\\
 +\int_{0}^{t} (t-s)^{\gamma-1} E_{\gamma, \gamma}(-r (t-s)^{\gamma})\Big( \int_{0}^{\infty} h(\sigma)|y(s-\sigma) | d \sigma\Big) ds
\end{array}
\end{equation*}
where
\begin{equation}\label{w1}
W:=\frac{1+\Gamma(1+\gamma)\Gamma(1-\gamma)}{1-M}.
\end{equation}
So
\begin{equation*}
\begin{array}{c}
(1-M)\frac{|y(t)|}{E_{\gamma}(-r t^{\gamma})}
 \leq 2 y_{0}  +\frac{cV}{1-M}\Big[ 2y_{0}+
 3 B y_{0} Wc+ 3 B y_{0} W ^{2}(1-M)c^{2}\Big]\\
 \times [1+\Gamma(1+\gamma)\Gamma(1-\gamma)\Big]
\end{array}
\end{equation*}
or
\begin{equation*}
\begin{array}{c}
(1-M)\frac{|y(t)|}{E_{\gamma}(-r t^{\gamma})}
 \leq 2 y_{0}  +c V W\Big[ 2y_{0}+
 3 B y_{0} Wc+ 3 B y_{0} W ^{2}(1-M)c^{2}\Big].
\end{array}
\end{equation*}
That is
\begin{equation}\label{sm7}
\begin{array}{c}
(1-M)\frac{|y(t)|}{E_{\gamma}(-r t^{\gamma})}
 \leq 3 B  y_{0} \Big[ 1+
 V Wc+ (V W c)^{2}+(V W c)^{3}\Big].
\end{array}
\end{equation}
\textbf{Claim}: We have
\begin{equation}\label{sm8}
\begin{array}{c}
(1-M)\frac{|y(t)|}{E_{\gamma}(-r t^{\gamma})}
 \leq 3 B  y_{0} \sum\limits_{l=0}^{k} (V W c)^{l}, \; t\in [(k-1)\mu, k\mu].
\end{array}
\end{equation}
By (\ref{sm3}), (\ref{sm4}) and (\ref{sm7}), the claim is valid  for $n=1, 2$ and $3$, resp. Assume that it is true  on $[(k-1)\mu, k\mu]$. Let $t\in[k\mu, (k+1)\mu]$, then by the relations  (\ref{sm1}), (\ref{sm2}) and (\ref{sm8}), it is obvious that
\begin{equation*}
\begin{array}{c}
|y(t)| \leq 2 y_{0} E_{\gamma}(-r t^{\gamma}) + 3 B c V \frac{y_{0}}{1-M} \sum\limits_{l=0}^{k} (V W c)^{k} E_{\gamma}(-r t^{\gamma})\\
+3 B c V \frac{y_{0}}{1-M}  \Gamma(1+\gamma)\Gamma(1-\gamma) \sum\limits_{l=0}^{k} (V W c)^{l} E_{\gamma}(-r t^{\gamma})\\+\int_{0}^{t} (t-s)^{\gamma-1} E_{\gamma, \gamma}(-r (t-s)^{\gamma})\Big( \int_{0}^{\infty} h(\sigma)|y(s-\sigma) | d \sigma\Big) ds
\end{array}
\end{equation*}
or
$$(1-M)\frac{|y(t)|}{E_{\gamma}(-r t^{\gamma})}
 \leq 2y_{0}+ 3 B c V  \frac{y_{0}}{1-M} \Big[ 1+ \Gamma(1+\gamma)\Gamma(1-\gamma)\Big] \sum\limits_{l=0}^{k} (V W c)^{l}.$$
 In view of (\ref{w1}), we deduce that
 $$(1-M)\frac{|y(t)|}{E_{\gamma}(-r t^{\gamma})}
 \leq 3 B  y_{0} \Big\{ 1+  c V W \sum\limits_{l=0}^{k} (V W c)^{l}\Big\} = 3 B y_{0} \sum\limits_{l=0}^{k+1} (V W c)^{l}$$
 and (\ref{sm8}) is fulfilled. If $VW c < 1,$ then $\sum\limits_{l=0}^{+\infty} (V W c)^{l}$ is convergent.
\end{proof}
Two classes of kernels satisfying the assumptions in this lemma are provided in \cite{A35}. Namely, we shall endorse the following notation in the remainder  of this paper $\sum\limits_{p,r;q,s=1}^{n_{1};n_{2}}= \sum\limits_{p,r=1}^{n_{1}}\sum\limits_{q,s=1}^{n_{2}}.$
\section{Bounded activation functions}
In this section,  the  Mittag-Leffler stability of system (\ref{hfn1}) will be discussed using  the Laplace transform and fractional  calculus. The boundedness of the activation functions  converts the non-linearities  resulting from  the higher-order terms to linear expressions. We shall study the stability of the origin point of the following  transformed system
% that is equivalent to the stability of the equilibrium point of the system (\ref{hfn1})
%The substitution of  $x_{p}(t)= u_{p}(t)+ x_{p}^{*}$
%and $y_{q}(t)= v_{q}(t) +y_{q}^{*}$ into system (\ref{hfn1}) gives the following transformed system
\begin{equation}\label{t1}
\begin{array}{c}
 D_{C}^{\delta}\Big[ u_{p}(t)-cu_{p}(t-\mu )\Big] =
 -a_{p} u_{p}(t)
+\sum\limits_{q,s=1}^{n_{2}} d_{qps} \int\nolimits_{0}^{\infty
} k_{qps}(\omega) g_{q}( y_{q}(t-\omega)) d\omega \\ \times  \int\nolimits_{0}^{\infty
} h_{qps}(\omega) g_{s}( y_{s}(t-\omega)) d\omega
-  \sum\limits_{q,s=1}^{n_{2}} d_{qps} \hat{k}_{qps} g_{q}( y_{q}^{\ast
}) \hat{h}_{qps} g_{s}(y_{s}^{\ast }),\; t>0,
  \end{array}
  \end{equation}
  \begin{equation}\label{t2}
  \begin{array}{c}
D_{C}^{\delta}\Big[ v_{q}(t)-\bar{c}v_{q}(t- \mu )\Big] = -\bar{a}_{q} v_{q}(t)
 +\sum\limits_{p,r=1}^{n_{1}} \bar{d}_{pqr} \int\nolimits_{0}^{\infty
} \bar{k}_{pqr}(\omega) \bar{g}_{p}( x_{p}(t-\omega)) d\omega\\ \times
 \int\nolimits_{0}^{\infty
} \bar{h}_{pqr}(\omega) \bar{g}_{r}( x_{r}(t-\omega)) d\omega
 -  \sum\limits_{p,r=1}^{n_{1}} \bar{d}_{pqr} \hat{\bar{k}}_{pqr} \bar{g}_{p}( x_{p}^{\ast
}) \hat{\bar{h}}_{pqr} \bar{g}_{r}(x_{r}^{\ast }), \; t>0,
\end{array}
\end{equation}
where
\begin{equation*}
\begin{array}{c}
x_{p}(t)= u_{p}(t)+ x_{p}^{*}, \; y_{q}(t)= v_{q}(t) +y_{q}^{*}, \\
u_{p}(t)  = \tilde{\phi}_{p}(t) = \phi_{p}(t) - x_{p}^{*}, \;
v_{q}(t) = \tilde{\varphi}_{q}(t) = \varphi_{q}(t) - y_{q}^{*}, \; t \leq 0.
\end{array}
\end{equation*}
Notice  that this  is equivalent to the stability of the equilibrium point of the system (\ref{hfn1}).\\

 We shall adopt the notation below
\begin{equation*}
\begin{array}{c}
% \nonumber to remove numbering (before each equation)
  F := \frac{1}{\xi \mu^{\delta}}+2^{\delta} \Gamma(1-\delta),\;
  a= \min\limits_{1\leq p \leq n_{1}}\{   a_{p} \}, \; \bar{a}= \min\limits_{1\leq q \leq n_{2}}\{   \bar{a}_{q} \}, \\ \xi= \min\{ a, \bar{a} \}, \; c^{*}= \max\{ c, \bar{c}\}, \; a^{*}=\max\{ a, \bar{a} \},\\
  K(t)=\max\Bigg\{\sum\limits_{p;q,s=1}^{n_{1};n_{2}}  G L_{q} \Big[ d_{qps}%
\hat{h}_{qps} k_{qps}(t)+ d_{spq} \hat{k}_{spq} h_{spq}(t )\Big], \\  \sum\limits_{p,r;q=1}^{n_{1};n_{2}}  \bar{G} M_{p}\Big[ \bar{d}_{pqr}
\hat{\bar{h}}_{pqr} \bar{k}_{pqr}(t )+ \bar{d}_{rqp} \hat{\bar{k}}_{rqp} \bar{h}_{rqp}(t )\Big] \Bigg\}.
\end{array}
\end{equation*}
\textbf{(A4)} Let $\Omega>0, \; 0<c^{*}<1$ be constants such that  $ \Omega < 1- \big[1+ \Gamma(1+\delta) \Gamma(1-\delta)   \big] F \Big(\frac{a^{*}c^{*}}{\xi}\Big), \; \big[1+ \Gamma(1+\delta) \Gamma(1-\delta)   \big] F \Big(\frac{a^{*}c^{*}}{\xi}\Big) < 1,$
\begin{equation}\label{k1}
  \int_{0}^{t} (t-\omega)^{\delta-1} E_{\delta,\delta} (-\xi(t-\omega)^{\delta}) \Big( \int_{-\infty}^{\omega} E_{\delta} (- \xi \lambda^{\delta}) K(\omega-\lambda)  d\lambda \Big) d\omega
   \leq  \Omega E_{\delta} (-  \xi t^{\delta}), \; t> 0.
  \end{equation}
\begin{thm}\label{th1}
Assume that \textbf{(A1)-(A4)}
%and for some constants $\Omega>0, \; c^{*}<1,\;
%\Omega < 1 - \big[1+ \Gamma(1+\delta) \Gamma(1-\delta)   \big] F \Big(\frac{a^{*}c^{*}}{\xi}\Big),$
% \; c^{*} < \min\Big\{1, \frac{\xi}{a^{*} F W}\Big\},$
hold. Then, for some constants $C, \xi> 0$
$$ u(t) \leq C E_{\delta} (-\xi t^{\delta}), \quad  v(t) \leq C E_{\delta} (-\xi t^{\delta}),    \quad t\geq 0. $$
where $u(t)=\sum\limits_{p=1}^{n_{1}} | u_{p}(t) | $ and $v(t)=\sum\limits_{q=1}^{n_{2}} | v_{q}(t) |$.
%where $\xi$ is a positive constant.
\end{thm}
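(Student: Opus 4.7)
The idea is to rewrite the fractional system (\ref{t1})--(\ref{t2}) as a coupled Mittag-Leffler integral equation via the variation-of-parameters formula, linearize the higher-order cubic nonlinearities by means of assumptions \textbf{(A2)} and \textbf{(A3)}, and close the estimate with the fractional Halanay-type inequality of Lemma 2.5.

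Setting $w_{p}(t):=u_{p}(t)-c\,u_{p}(t-\mu)$ and moving the $a_{p}c\,u_{p}(t-\mu)$ term produced by the neutral part, equation (\ref{t1}) reads $D_{C}^{\delta}w_{p}(t)=-a_{p}w_{p}(t)-a_{p}c\,u_{p}(t-\mu)+N_{p}(t)$, where $N_{p}(t)$ collects the higher-order nonlinearity minus its equilibrium value. By the scalar Mittag-Leffler representation, this yields
$$u_{p}(t)=c\,u_{p}(t-\mu)+E_{\delta}(-a_{p}t^{\delta})w_{p}(0)+\int_{0}^{t}(t-s)^{\delta-1}E_{\delta,\delta}(-a_{p}(t-s)^{\delta})\bigl[-a_{p}c\,u_{p}(s-\mu)+N_{p}(s)\bigr]ds,$$
together with an analogous formula for $v_{q}$. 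This is precisely the structure used in the proof of Lemma 2.5.

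The central algebraic step, which is what converts the higher-order feature into a tractable linear expression, is the decomposition $AB-A^{\ast}B^{\ast}=(A-A^{\ast})B+A^{\ast}(B-B^{\ast})$ applied to each pair of integrals inside $N_{p}$, with $A=\int_{0}^{\infty}k_{qps}(s)g_{q}(y_{q}(t-s))\,ds$, $B=\int_{0}^{\infty}h_{qps}(s)g_{s}(y_{s}(t-s))\,ds$ and $A^{\ast},B^{\ast}$ their equilibrium counterparts. By \textbf{(A2)} the factors $|B|\leq G\hat{h}_{qps}$ and $|A^{\ast}|\leq G\hat{k}_{qps}$ are uniformly bounded, while by \textbf{(A3)} the differences obey $|A-A^{\ast}|\leq L_{q}\int_{0}^{\infty}k_{qps}(s)|v_{q}(t-s)|\,ds$, and similarly for $B-B^{\ast}$. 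Plugging these bounds into the integral representation, taking absolute values, and summing over $p$ produces on the right-hand side a convolution of $v(t-s)$ against exactly the kernel $\sum_{p;q,s}GL_{q}\bigl[d_{qps}\hat{h}_{qps}k_{qps}(s)+d_{spq}\hat{k}_{spq}h_{spq}(s)\bigr]$, that is, the first argument of the max defining $K(s)$. The same manipulation for the $v$-equation yields the second argument, so that both components are driven by convolutions against $K$.

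Using $E_{\delta}(-a_{p}t^{\delta})\leq E_{\delta}(-\xi t^{\delta})$ and dominating the neutral coefficients by the worst case $c^{\ast}\leq a^{\ast}c^{\ast}/\xi$, the pair of scalar inequalities for $u(t)$ and $v(t)$ can be added to yield a single inequality for $z(t):=u(t)+v(t)$ of precisely the form required by Lemma 2.5 with $r=\xi$, effective neutral coefficient $a^{\ast}c^{\ast}/\xi$, delay kernel $K$, constant $V=F$ and parameter $M=\Omega$. Assumption \textbf{(A4)} is exactly the pair of smallness hypotheses of Lemma 2.5, and the kernel condition (\ref{k1}) is the integral hypothesis on $h$ in that lemma, so the lemma delivers $z(t)\leq CE_{\delta}(-\xi t^{\delta})$, from which both claimed bounds follow. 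I expect the main obstacle to be the cross-coupling between the $u$- and $v$-components: the nonlinearity in (\ref{t1}) feeds back $v$ and vice versa, so no single-component inequality is available and a careful bookkeeping is required to fit the coupled system into the scalar Halanay framework. The definition of $K$ as a max of the two kernels is designed precisely to absorb this coupling into a single scalar input for Lemma 2.5.
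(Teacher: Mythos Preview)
Your plan is correct and mirrors the paper's argument: the same bilinear decomposition $AB-A^{\ast}B^{\ast}=(A-A^{\ast})B+A^{\ast}(B-B^{\ast})$, the same Mittag--Leffler variation-of-constants representation, and the same reduction to a single scalar inequality driven by the kernel $K$ with effective neutral coefficient $a^{\ast}c^{\ast}/\xi$. The paper combines the two layers via $V(t)=\max\{u(t),v(t)\}$ rather than your sum $z(t)=u(t)+v(t)$, but either choice closes the estimate under the same hypothesis \textbf{(A4)}. The only place your write-up needs tightening is the final invocation of the Halanay lemma: that lemma is \emph{stated} for a differential inequality, whereas $z$ (or $V$) satisfies only the integral inequality that appears as an intermediate step inside the lemma's proof (the analogue of relation~(\ref{sm1}) with $r=\xi$ and $c$ replaced by $a^{\ast}c^{\ast}/\xi$). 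You therefore cannot quote the lemma as a black box; you must enter its proof from the integral stage onward. The paper does precisely this, re-running the interval-by-interval induction on $[(k-1)\mu,k\mu]$ explicitly for $V$ rather than citing the lemma.
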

\begin{proof}
%If we substituting\omega\lambda

%The nonlinear terms may be written in the form
The non-linear terms may be expressed as
\begin{equation}\label{b1}
\begin{array}{c}
\sum\limits_{q,s=1}^{n_{2}} d_{qps}\int\nolimits_{0}^{\infty
}k_{qps}(\omega) g_{q} ( y_{q}(t-\omega)) d\omega \int\nolimits_{0}^{\infty
}h_{qps}(\omega) g_{s}( y_{s}(t-\omega)) d\omega
-\sum\limits_{q,s=1}^{n_{2}} d_{qps} \hat{k}_{qps} g_{q}( y_{q}^{\ast
}) \hat{h}_{qps} g_{s}( y_{s}^{\ast }) \\
% =\sum\limits_{q,s=1}^{n_{2}} d_{qps}
%\Bigg\{ \int\nolimits_{0}^{\infty }k_{qps}(\omega)\Big[ g_{q}(
%y_{q}(t-\omega)) - g_{q}( y_{q}^{\ast }) \Big]
%d\omega\int\nolimits_{0}^{\infty } h_{qps}(\omega) g_{s}( y_{s}(t-\omega))
%d\omega  \\
% +\int\nolimits_{0}^{\infty } h_{qps}(\omega)\Big[ g_{s}(
%y_{s}(t-\omega)) -g_{s}( y_{s}^{\ast }) \Big]
%d\omega\int\nolimits_{0}^{\infty } k_{qps}(\omega) g_{q}( y_{q}^{\ast })
%d\omega\Bigg\}  \\
% =\sum\limits_{q,s=1}^{n_{2}}\int\nolimits_{0}^{\infty }\Bigg\{ \Big[
%d_{qps}\Big( \int\nolimits_{0}^{\infty } h_{qps}(\omega) g_{s}(
%y_{s}(t-\omega)) d\omega \Big) k_{qps}(\omega)+ d_{spq} \hat{k}_{spq}g_{s}(
%y_{s}^{\ast }) h_{spq}(\omega)\Big]  \Big[ g_{q}(y_{q}(t-\omega)) - g_{q}( y_{q}^{\ast
%}) \Big] \Bigg\} d\omega  \\
 =\sum\limits_{q,s=1}^{n_{2}}\int\nolimits_{0}^{\infty }\Bigg\{ \Big[
d_{qps}\Big( \int\nolimits_{0}^{\infty } h_{qps}(\omega) g_{s}(
y_{s}(t-\omega)) d\omega \Big) k_{qps}(\omega)+ d_{spq} \hat{k}_{spq}g_{s}(
y_{s}^{\ast }) h_{spq}(\omega)\Big]   \tilde{g}_{q}(v_{q}(t-\omega))
 \Bigg\} d\omega,
\end{array}
\end{equation}
%likewise, we obtain
and
\begin{equation}\label{b2}
\begin{array}{c}
 \sum\limits_{p,r=1}^{n_{1}} \bar{d}_{pqr}\int\nolimits_{0}^{\infty
}\bar{k}_{pqr}(\omega) \bar{g}_{p} ( x_{p}(t-\omega)) d\omega \int\nolimits_{0}^{\infty
}\bar{h}_{pqr}(\omega) \bar{g}_{r}( x_{r}(t-\omega)) d\omega
-\sum\limits_{p,r=1}^{n_{1}} \bar{d}_{pqr} \hat{\bar{k}}_{pqr} \bar{g}_{p}( x_{p}^{\ast
}) \hat{\bar{h}}_{pqr} \bar{g}_{r}( x_{r}^{\ast }) \\
% =\sum\limits_{p,r=1}^{n_{1}} \bar{d}_{pqr}
%\Bigg\{ \int\nolimits_{0}^{\infty }\bar{k}_{pqr}(\omega)\Big[ \bar{g}_{p}(
%x_{p}(t-\omega)) - \bar{g}_{p}( x_{p}^{\ast }) \Big]
%d\omega\int\nolimits_{0}^{\infty } \bar{h}_{pqr}(\omega) \bar{g}_{r}( x_{r}(t-\omega))
%d\omega    \\\
%+\int\nolimits_{0}^{\infty } \bar{h}_{pqr}(\omega)\Big[ \bar{g}_{r}(
%x_{r}(t-\omega)) -\bar{g}_{r}( x_{r}^{\ast }) \Big]
%d\omega\int\nolimits_{0}^{\infty } \bar{k}_{pqr}(\omega) \bar{g}_{p}( x_{p}^{\ast })
%d\omega\Bigg\}  \nonumber\\
%& & =\sum\limits_{p,r=1}^{n_{1}}\int\nolimits_{0}^{\infty }\Bigg\{ \Big[
%\bar{d}_{pqr}\Big( \int\nolimits_{0}^{\infty } \bar{h}_{pqr}(\omega) \bar{g}_{r}(
%x_{r}(t-\omega)) d\omega \Big) \bar{k}_{pqr}(\omega)+ \bar{d}_{rqp} \hat{\bar{k}}_{rqp}\bar{g}_{r}(
%x_{r}^{\ast }) \bar{h}_{rqp}(\omega)\Big]  \nonumber \\
%& & \quad \times  \Big[ \bar{g}_{p}(x_{p}(t-\omega)) - \bar{g}_{p}( x_{p}^{\ast
%}) \Big] \Bigg\} d\omega\nonumber\\
 =\sum\limits_{p,r=1}^{n_{1}}\int\nolimits_{0}^{\infty }\Bigg\{ \Big[
\bar{d}_{pqr}\Big( \int\nolimits_{0}^{\infty } \bar{h}_{pqr}(\omega) \bar{g}_{r}(
x_{r}(t-\omega)) d\omega \Big) \bar{k}_{pqr}(\omega)+ \bar{d}_{rqp} \hat{\bar{k}}_{rqp}\bar{g}_{r}(
x_{r}^{\ast }) \bar{h}_{rqp}(\omega)\Big]  \tilde{\bar{g}}_{p}(u_{p}(t-\omega)) \Bigg\} d\omega,
\end{array}
\end{equation}
where
\begin{equation*}
\begin{array}{c}
\tilde{g}_{q}\left( v_{q}(t)\right) :=g_{q}( y_{q}(t))
-g_{q}( y_{q}^{\ast }),\;
\tilde{\bar{g}}_{p}( u_{p}(t)) :=\bar{g}_{p}( x_{p}(t))
- \bar{g}_{p}( x_{p}^{\ast }).
\end{array}
\end{equation*}
Next, we add and substract the expressions $ca_{p} u_{p}(t-\mu)$ and $\bar{c}\bar{a}_{q} v_{q}(t-\mu)$ in the equations (\ref{t1}) and (\ref{t2}), resp.
\begin{equation}\label{s1}
\begin{array}{c}
 D_{C}^{\delta}\Big[ u_{p}(t)-cu_{p}(t-\mu )\Big] =
 -a_{p} \Big[ u_{p}(t) -c u_{p}(t-\mu)\Big] - c a_{p}u_{p}(t-\mu)
  +\sum\limits_{q,s=1}^{n_{2}} d_{qps} \int\nolimits_{0}^{\infty
} k_{qps}(\omega) \\ \times g_{q}( y_{q}(t-\omega)) d\omega
 \int\nolimits_{0}^{\infty
} h_{qps}(\omega) g_{s}( y_{s}(t-\omega)) d\omega
-  \sum\limits_{q,s=1}^{n_{2}} d_{qps}  \hat{k}_{qps} g_{q}( y_{q}^{\ast
}) \hat{h}_{qps} g_{s}(y_{s}^{\ast }),
\end{array}
\end{equation}
\begin{equation} \label{s2}
\begin{array}{c}
 D_{C}^{\delta}\Big[ v_{q}(t)-\bar{c}v_{q}(t-\mu)\Big] =  -\bar{a}_{q}\Big[  v_{q}(t) -\bar{c} v_{q}(t-\mu)\Big] - \bar{c}\bar{a}_{q} v_{q}(t-\mu)
  +\sum\limits_{p,r=1}^{n_{1}} \bar{d}_{pqr} \int\nolimits_{0}^{\infty
} \bar{k}_{pqr}(\omega) \\ \times \bar{g}_{p}( x_{p}(t-\omega)) d\omega
 \int\nolimits_{0}^{\infty
} \bar{h}_{pqr}(\omega) \bar{g}_{r}( x_{r}(t-\omega)) d\omega
  -  \sum\limits_{p,r=1}^{n_{1}} \bar{d}_{pqr} \hat{\bar{k}}_{pqr} \bar{g}_{p}( x_{p}^{\ast
}) \hat{\bar{h}}_{pqr} \bar{g}_{r}(x_{r}^{\ast }).
\end{array}
\end{equation}
An application of Laplace transform to (\ref{s1}) and (\ref{s2}) yields
\begin{equation}\label{l1}
\begin{array}{c}
 u_{p}(t)-cu_{p}(t- \mu )= E_{\delta}( - a_{p} t^{\delta})\Big[ \tilde{\phi}_{p}(0)-c \tilde{\phi} _{p}(-\mu )\Big]
- c a_{p} \int_{0}^{t}(t-\omega)^{\delta-1}E_{\delta,\delta}(-a_{p} (t-\omega)^{\delta})\\
\times u_{p}(\omega-\mu )d\omega  +
 \sum\limits_{q,s=1}^{n_{2}}d_{qps} \int_{0}^{t}(t-\omega)^{\delta-1} E_{\delta,\delta}(-a_{p} (t-\omega)^{\delta}) \int\nolimits_{0}^{\infty
} k_{qps}(\lambda) g_{q}( y_{q}(\omega-\lambda)) d\lambda \\
\times  \int\nolimits_{0}^{\infty
} h_{qps}(\lambda) g_{s}( y_{s}(\omega-\lambda)) d\lambda d\omega
 - \sum\limits_{q,s=1}^{n_{2}} d_{qps} \hat{k}_{qps} g_{q}( y_{q}^{\ast
}) \hat{h}_{qps} g_{s}(y_{s}^{\ast }) \int_{0}^{t}(t-\omega)^{\delta-1} E_{\delta,\delta}(-a_{p} (t-\omega)^{\delta}) d\omega,
\end{array}
\end{equation}
\begin{equation}\label{l2}
\begin{array}{c}
v_{q}(t)-\bar{c}v_{q}(t- \mu ) =  E_{\delta}( - \bar{a}_{q} t^{\delta})\Big[ \tilde{\varphi}_{q}(0)-\bar{c}
\tilde{\varphi} _{q}(-\mu )\Big]
- \bar{c}  \bar{a}_{q} \int_{0}^{t}(t-\omega)^{\delta-1}E_{\delta,\delta}(-\bar{a}_{q} (t-\omega)^{\delta}) \\
  \times v_{q}(\omega- \mu )d\omega
 +  \sum\limits_{p,r=1}^{n_{1}}\bar{d}_{pqr} \int_{0}^{t}(t-\omega)^{\delta-1} E_{\delta
,\delta}(- \bar{a}_{q} (t-\omega)^{\delta}) \int\nolimits_{0}^{\infty
} \bar{k}_{pqr}(\lambda) \bar{g}_{p}( x_{p}(\omega-\lambda)) d\lambda\\
\times  \int\nolimits_{0}^{\infty
} \bar{h}_{pqr}(\lambda) \bar{g}_{r}( x_{r}(\omega-\lambda)) d\lambda d\omega
 - \sum\limits_{p,r=1}^{n_{1}} \bar{d}_{pqr} \hat{\bar{k}}_{pqr} \bar{g}_{p}( x_{p}^{\ast
}) \hat{\bar{h}}_{pqr} \bar{g}_{r}(x_{r}^{\ast })\int_{0}^{t}(t-\omega)^{\delta-1} E_{\delta,\delta}(- \bar{a}_{q} (t-\omega)^{\delta}) d\omega.
\end{array}
\end{equation}
In light of the assumptions \textbf{(A2)} and \textbf{(A3)}, (\ref{b1}) and (\ref{b2}), we obtain after evaluating the last terms in (\ref{l1}) and (\ref{l2})
\begin{equation*}
\begin{array}{c}
 u_{p}(t)-cu_{p}(t- \mu ) \leq  E_{\delta}( - a_{p} t^{\delta})\Big[ \tilde{\phi}_{p}(0)-c \tilde{\phi} _{p}(-\mu )\Big]
- c a_{p} \int_{0}^{t}(t-\omega)^{\delta-1}\\
\times E_{\delta,\delta}(-a_{p} (t-\omega)^{\delta}) u_{p}(\omega-\mu )d\omega
  + G  \sum\limits_{q,s=1}^{n_{2}} L_{q}\int_{0}^{t}(t-\omega)^{\delta-1} E_{\delta,\delta}(-a_{p} (t-\omega)^{\delta}) \\ \times \int\nolimits_{0}^{\infty }\Big[ d_{qps}
\hat{h}_{qps}k_{qps}(\lambda )+ d_{spq}\hat{k}_{spq} h_{spq}(\lambda )\Big]| v_{q}(\omega-\lambda ) | d\lambda d\omega,\;t>0,
\end{array}
\end{equation*}
\begin{equation*}
\begin{array}{c}
 v_{q}(t)-\bar{c}v_{q}(t- \mu ) \leq  E_{\delta}( -  \bar{a}_{q} t^{\delta})\Big[ \tilde{\varphi}_{q}(0)-\bar{c}
\tilde{\varphi} _{q}(-\mu )\Big]- \bar{c}  \bar{a}_{q} \int_{0}^{t}(t-\omega)^{\delta-1}\\ \times E_{\delta,\delta}(-\bar{a}_{q} (t-\omega)^{\delta}) v_{q}(\omega-\mu )d\omega
 + \bar{G}  \sum\limits_{p,r=1}^{n_{1}} M_{p}\int_{0}^{t}(t-\omega)^{\delta-1} E_{\delta,\delta}(- \bar{a}_{q} (t-\omega)^{\delta})
  \\ \times \int\nolimits_{0}^{\infty }\Big[ \bar{d}_{pqr}
\hat{\bar{h}}_{pqr}\bar{k}_{pqr}(\lambda )+ \bar{d}_{rqp}\hat{\bar{k}}_{rqp} \bar{h}_{rqp}(\lambda )\Big]| u_{p}(\omega-\lambda ) | d\lambda d\omega,\;t>0.
\end{array}
\end{equation*}%
%Thus, for $t> 0$
%Next, back to the remaining terms, we may write
Therefore
\begin{equation*}
\begin{array}{c}
| u_{p}(t) | \leq c | u_{p}(t-\mu) | + E_{\delta}(-a t^{\delta}) | \tilde{\phi}
_{p}(0)- c \tilde{\phi}_{p}(-\mu ) |
+ c a \int_{0}^{t}(t-\omega)^{\delta-1}\\ \times E_{\delta,\delta}(-a (t-\omega)^{\delta}) | u_{p}(\omega-\mu ) | d\omega
 + G \sum\limits_{q,s=1}^{n_{2}} L_{q}\int_{0}^{t}(t-\omega)^{\delta-1}E_{\delta,\delta}(-a (t-\omega)^{\delta})\\ \times \int\nolimits_{0}^{\infty }\Big[ d_{qps}
\hat{h}_{qps} k_{qps}(\lambda ) + d_{spq} \hat{k}_{spq} h_{spq}(\lambda )\Big]| v_{q}(\omega-\lambda ) | d\lambda d\omega,
\end{array}
\end{equation*}
and
\begin{equation*}
\begin{array}{c}
|v_{q}(t)| \leq \bar{c} | v_{q}(t- \mu) | + E_{\delta}(-\bar{a} t^{\delta})| \tilde{\varphi}_{q}(0) -  \bar{c}
\tilde{ \varphi}_{q}(-\mu )|
+ \bar{c} \bar{a} \int_{0}^{t}(t-\omega)^{\delta
-1} \\ \times E_{\delta,\delta}(- \bar{a}  (t-\omega)^{\delta}) | v_{q}(\omega-\mu) | d\omega
+ \bar{G} \sum\limits_{p,r =1}^{n_{1}} M_{p}\int_{0}^{t}(t-\omega)^{\delta-1}E_{\delta
,\delta}(-\bar{a} (t-\omega)^{\delta})\\ \times \int\nolimits_{0}^{\infty }\Big[ \bar{d}_{pqr}
\hat{\bar{h}}_{pqr} \bar{k}_{pqr}(\lambda ) + \bar{d}_{rqp} \hat{\bar{k}}_{rqp} \bar{h}_{rqp}(\lambda )\Big]
| u_{p}(\omega-\lambda ) | d\lambda d\omega.
\end{array}
\end{equation*}
%where
%$$a^{*}= \max\limits_{1\leq i \leq n}\{   a_{p} \}, \quad \bar{a}^{*}= \max\limits_{1\leq j \leq m}\{   \bar{a}_{q} \}.$$
After summing up, we arrive at
\begin{equation}\label{f3}
\begin{array}{c}
u(t) \leq c u(t-\mu )
+E_{\delta}(-a  t^{\delta}) \sum\limits_{p=1}^{n_{1}} | \tilde{\phi} _{p}(0)- c\tilde{\phi} _{p}(-\mu
)|
+ c a  \int_{0}^{t}(t-\omega)^{\delta-1} \\ \times E_{\delta,\delta
}(-a (t-\omega)^{\delta}) u(\omega-\mu )  d\omega
 + G   \sum\limits_{p;q,s=1}^{n_{1};n_{2}} L_{q}\int_{0}^{t}(t-\omega)^{\delta-1}E_{\delta
,\delta}(-a (t-\omega)^{\delta})\\ \times
 \int\nolimits_{0}^{\infty }\Big[ d_{qps}%
\hat{h}_{qps} k_{qps}(\lambda )+ d_{spq} \hat{k}_{spq} h_{spq}(\lambda )\Big]
v(\omega-\lambda )  d\lambda d\omega,\;t>0,
\end{array}
\end{equation}
\begin{equation}\label{f4}
\begin{array}{c}
v(t) \leq \bar{c} v(t- \mu)
+E_{\delta}(- \bar{a} t^{\delta}) \sum\limits_{q=1}^{n_{2}}| \tilde{\varphi} _{q}(0)- \bar{c}\tilde{\varphi} _{q}(-\mu
)|
 + \bar{c} \bar{a} \int_{0}^{t}(t-\omega)^{\delta-1} \\ \times E_{\delta ,\delta
}(- \bar{a} (t-\omega)^{\delta}) v(\omega-\mu ) d\omega
+ \bar{G}  \sum\limits_{p,r;q=1}^{n_{1};n_{2}} M_{p}\int_{0}^{t}(t-\omega)^{\delta-1}E_{\delta
,\delta}(- \bar{a} (t-\omega)^{\delta}) \\ \times  \int\nolimits_{0}^{\infty }\Big[ \bar{d}_{pqr}
\hat{\bar{h}}_{pqr} \bar{k}_{pqr}(\lambda )+ \bar{d}_{rqp} \hat{\bar{k}}_{rqp} \bar{h}_{rqp}(\lambda )\Big]
u(\omega-\lambda )  d\lambda d\omega,\;t>0.
\end{array}
\end{equation}
%Consider with  maintaining the generality that
Notice that we can assume that
$$| \tilde{\phi}(\omega)  | \leq u_{0} E_{\delta}(- a (\omega+\mu)^{\delta}) \quad  \text{for} \; \omega\in [-\mu, 0], \; u_{0}> 0,$$
$$| \tilde{\varphi}(\omega)  | \leq v_{0} E_{\delta}(- \bar{a}  (\omega+\mu)^{\delta}) \quad  \text{for} \; \omega\in [-\mu, 0], \; v_{0}> 0.$$
This is always possible because, if $\tilde{\phi}(\omega) $ and $\tilde{\varphi}(\omega) $ are bounded by $\vartheta$ and $\bar{\vartheta}$, resp, then
\begin{eqnarray*}
% \nonumber to remove numbering (before each equation)
  \vartheta &\leq& u_{0} E_{\delta}(- a \mu^{\delta}) \leq u_{0} E_{\delta}(- a (\omega+\mu)^{\delta}), \\
  \bar{\vartheta} &\leq& v_{0} E_{\delta}(- \bar{a}  \mu^{\delta}) \leq v_{0} E_{\delta}(- \bar{a} (\omega+\mu)^{\delta}).
\end{eqnarray*}
We can choose  $u_{0}= \frac{\vartheta}{E_{\delta}(-a \mu^{\delta})}$ and $v_{0}= \frac{\bar{\vartheta}}{E_{\delta}(-\bar{a} \mu^{\delta})}$.
%,   $0< c< 1$ and $0 < \bar{c} < 1$.
 Consequently, (\ref{f3}) and (\ref{f4}) become
\begin{equation}\label{ff3}
\begin{array}{c}
u(t) \leq  2 u_{0}E_{\delta}(-a  t^{\delta}) +  c u(t-\mu )
 +  c a \int_{0}^{t}(t-\omega)^{\delta-1} E_{\delta,\delta
}(-a (t-\omega)^{\delta}) u(\omega-\mu )  d\omega \\
 + G   \sum\limits_{p;q,s=1}^{n_{1};n_{2}} L_{q}\int_{0}^{t}(t-\omega)^{\delta-1}E_{\delta,\delta}(-a (t-\omega)^{\delta}) \int\nolimits_{0}^{\infty }\Big[ d_{qps}%
\hat{h}_{qps} k_{qps}(\lambda )+ d_{spq} \hat{k}_{spq} h_{spq}(\lambda )\Big]
v(\omega-\lambda )  d\lambda d\omega,
\end{array}
\end{equation}
\begin{equation}\label{ff4}
\begin{array}{c}
v(t) \leq 2 v_{0} E_{\delta}(- \bar{a}  t^{\delta})+ \bar{c} v(t-\mu)  + \bar{c} \bar{a}  \int_{0}^{t}(t-\omega)^{\delta-1} E_{\delta,\delta}(- \bar{a} (t-\omega)^{\delta}) v(\omega-\mu ) d\omega \\
 +  \bar{G}  \sum\limits_{p,r;q=1}^{n_{1};n_{2}} M_{p}\int_{0}^{t}(t-\omega)^{\delta-1}E_{\delta,\delta}(- \bar{a} (t-\omega)^{\delta})
 \int\nolimits_{0}^{\infty }\Big[ \bar{d}_{pqr}
\hat{\bar{h}}_{pqr} \bar{k}_{pqr}(\lambda )+ \bar{d}_{rqp} \hat{\bar{k}}_{rqp} \bar{h}_{rqp}(\lambda )\Big]
u(\omega-\lambda )  d\lambda d\omega.
\end{array}
\end{equation}
%The summation  of (\ref{ff3}) and (\ref{ff4}) yield\omega
%\begin{equation}\label{ess5}
%\begin{array}{c}
%u(t) \leq 2 V_{0}E_{\delta}(-a t^{\delta})  +
%  c V(t-\mu )
%+ c a  \int_{0}^{t}(t-\omega)^{\delta-1} E_{\alpha ,\alpha
%}(-a (t-\omega)^{\delta}) V(\omega-\mu )  d\omega \\
%+  G  \sum\limits_{p;q,s=1}^{n_{1};n_{2}} L_{q}\int_{0}^{t}(t-\omega)^{\delta-1}E_{\alpha
%,\alpha }(-a (t-\omega)^{\delta})
% \int\nolimits_{0}^{\infty }\Big[ d_{qps}%
%\hat{h}_{qps} k_{qps}(\lambda )+ d_{spq} \hat{k}_{spq} h_{spq}(\lambda )\Big]
%V(\omega-\lambda )  d\lambda d\omega,\;t>0,
%\end{array}
%\end{equation}
%\begin{equation}\label{ff4}
%\begin{array}{c}
%v(t) \leq 2 V_{0} E_{\delta}(- \bar{a}  t^{\delta})+ \bar{c} V(t- \theta)  + \bar{c} \bar{a}  \int_{0}^{t}(t-\omega)^{\delta-1} E_{\alpha ,\alpha
%}(- \bar{a} (t-\omega)^{\delta}) V(s-\theta ) d\omega \\
% +  \bar{G}  \sum\limits_{p,r;q=1}^{n_{1};n_{2}} M_{p}\int_{0}^{t}(t-\omega)^{\delta-1}E_{\alpha
%,\alpha }(- \bar{a} (t-\omega)^{\delta})
% \int\nolimits_{0}^{\infty }\Big[ \bar{d}_{pqr}
%\hat{\bar{h}}_{pqr} \bar{k}_{pqr}(\lambda )+ \bar{d}_{rqp} \hat{\bar{k}}_{rqp} \bar{h}_{rqp}(\lambda )\Big]
%V(\omega-\lambda )  d\lambda d\omega,\;t>0.
%\end{array}
%\end{equation}
Let $V(t) = \max\{ u(t), v(t) \}$ and $V_{0}= \max\{u_{0}, v_{0}   \}$. From the relations (\ref{ff3}) and (\ref{ff4}), we entail the single inequality
\begin{equation}\label{ess5}
\begin{array}{c}
V(t) \leq 2 V_{0}E_{\delta}(-\xi t^{\delta})  +
  c^{*} V(t-\mu )
+ c^{*} a^{*}  \int_{0}^{t}(t-\omega)^{\delta-1} E_{\delta,\delta}(-\xi (t-\omega)^{\delta}) V(\omega-\mu )  d\omega \\
+  \int_{0}^{t}(t-\omega)^{\delta-1}E_{\delta,\delta}(-\xi (t-\omega)^{\delta})
 \int\nolimits_{0}^{\infty } K(\lambda)
V(\omega-\lambda )  d\lambda d\omega,\;t>0.
\end{array}
\end{equation}
%\begin{equation}\label{esss5}
%\begin{array}{c}
%V(t) \leq 2 V_{0} E_{\delta}(- \bar{a} t^{\delta}) +
%  \bar{c} V(t- \theta)
%+ \bar{c} \bar{a} \int_{0}^{t}(t-\omega)^{\delta-1} E_{\alpha ,\alpha
%}(- \bar{a} (t-\omega)^{\delta}) V(s-\theta ) d\omega \\
% +   \bar{G}  \sum\limits_{p,r=1}^{n_{1}} M_{p}\int_{0}^{t}(t-\omega)^{\delta-1}E_{\alpha
%,\alpha }(- \bar{a}(t-\omega)^{\delta}) \int\nolimits_{0}^{\infty }\Big[ \bar{d}_{pqr}
%\hat{\bar{h}}_{pqr} \bar{k}_{pqr}(\lambda )+ \bar{d}_{rqp} \hat{\bar{k}}_{rqp} \bar{h}_{rqp}(\lambda )\Big]
%V(\omega-\lambda )  d\lambda d\omega,\;t>0.
%\end{array}
%\end{equation}
 For $t\in [0, \mu]$ and from (\ref{ess5}), we see that
 %As the function $E_{\delta}(-a t^{\alpha})$ decreases, then  (\ref{f5}) and (\ref{ff5}) become
 \begin{equation}\label{f5}
 \begin{array}{c}
V(t) \leq 3 V_{0}E_{\delta}(-\xi t^{\delta})  + c^{*} a^{*} \int_{0}^{t}(t-\omega)^{\delta-1} E_{\delta,\delta
}(- \xi (t-\omega)^{\delta}) V(\omega-\mu )  d\omega  \\
 +  \int_{0}^{t}(t-\omega)^{\delta-1}E_{\delta
,\delta}(-\xi (t-\omega)^{\delta})\int\nolimits_{0}^{\infty }
K(\lambda)V(\omega-\lambda )  d\lambda d\omega.
\end{array}
\end{equation}
%or for $t\in [0, \theta]$, then
%\begin{equation}\label{ff5}
%\begin{array}{c}
%V(t) \leq 3 V_{0} E_{\delta}(- \bar{a} t^{\delta})   + \bar{c} \bar{a} \int_{0}^{t}(t-\omega)^{\delta-1} E_{\alpha ,\alpha
%}(- \bar{a} (t-\omega)^{\delta}) V(s-\theta ) d\omega \\
%+   \bar{G}  \sum\limits_{p,r=1}^{n_{1}} M_{p}\int_{0}^{t}(t-\omega)^{\delta-1}E_{\alpha
%,\alpha }(- \bar{a}(t-\omega)^{\delta})
% \int\nolimits_{0}^{\infty }\Big[ \bar{d}_{pqr}
%\hat{\bar{h}}_{pqr} \bar{k}_{pqr}(\lambda )+ \bar{d}_{rqp} \hat{\bar{k}}_{rqp} \bar{h}_{rqp}(\lambda )\Big]
%V(\omega-\lambda )  d\lambda d\omega.
%\end{array}
%\end{equation}    and passing to the $sup$
Dividing by $E_{\delta}(-\xi t^{\delta})$, we get
\begin{equation}\label{s5}
\begin{array}{c}
\frac{V(t)}{E_{\delta}(-\xi t^{\delta})} \leq 3 V_{0}  +\frac{ c^{*} a^{*} V_{0}}{E_{\delta}( - \xi t^{\delta})} \int_{0}^{t}(t-\omega)^{\delta-1} E_{\delta,\delta}(-\xi (t-\omega)^{\delta})  E_{\delta}(- \xi \omega^{\delta}) d\omega\\
 +  \frac{1}{E_{\delta}(-\xi t^{\delta})}  \int_{0}^{t}(t-\omega)^{\delta-1}
 E_{\delta
,\delta}(-\xi (t-\omega)^{\delta})
\int\nolimits_{-\infty}^{\omega} K(\omega-\lambda) E_{\delta}(- \xi \lambda^{\delta}) d \lambda d\omega  \sup\limits_{- \infty < \lambda \leq t } \frac{V(\lambda )}{E_{\delta}(-\xi \lambda^{\delta})}.
\end{array}
\end{equation}
%or by $E_{\delta}(- \bar{a} t^{\alpha})$ and passing to the $sup$, we get
%\begin{equation}\label{ss5}
%\begin{array}{c}
%\frac{V(t)}{ E_{\delta}(- \bar{a} t^{\delta}) } \leq 3 V_{0}  + \frac{\bar{c} \bar{a} V_{0}}{ E_{\delta}(- \bar{a} t^{\delta}) } \int_{0}^{t}(t-\omega)^{\delta-1} E_{\alpha ,\alpha
%}(- \bar{a} (t-\omega)^{\delta})  E_{\delta}(- \bar{a} \omega^{\delta}) d\omega
% +   \frac{\bar{G} }{ E_{\delta}(- \bar{a} t^{\delta}) } \sum\limits_{p,r=1}^{n_{1}} M_{p}\int_{0}^{t}(t-\omega)^{\delta-1}\\
% \times E_{\alpha
%,\alpha }(- \bar{a}(t-\omega)^{\delta})
% \int\nolimits_{0}^{\infty }\Big[ \bar{d}_{pqr}
%\hat{\bar{h}}_{pqr} \bar{k}_{pqr}(\lambda-s )+ \bar{d}_{rqp} \hat{\bar{k}}_{rqp} \bar{h}_{rqp}(\lambda-s  )\Big]
%E_{\delta}(- \bar{a} \lambda^{\delta}) d \lambda d\omega
%\sup\limits_{- \infty < \lambda \leq t } \frac{ V(\lambda )}{E_{\delta}( - \bar{a} \lambda^{\delta})}.
%\end{array}
%\end{equation}
By virtue  of  the  estimation  (\ref{mc1}) and formula (\ref{formula1}), we obtain
\begin{equation}\label{g1}
\begin{array}{c}
% \nonumber to remove numbering (before each equation)
  \int_{0}^{t}(t-\omega)^{\delta-1} E_{\delta,\delta
}(- \xi(t-\omega)^{\delta}) E_{\delta}(- \xi \omega^{\delta})  d\omega  \leq  \frac{\Gamma(1+\delta)}{\xi} \int_{0}^{t}(t-\omega)^{\delta-1} E_{\delta,\delta
}(-\xi (t-\omega)^{\delta})  \omega^{ - \delta}  d\omega    \\
   \leq \frac{\Gamma(1+\delta) \Gamma(1-\delta)}{\xi}  E_{\delta}(-\xi t^{\delta}),
\end{array}
\end{equation}
%or
%\begin{equation}\label{gg1}
%\begin{array}{c}
%% \nonumber to remove numbering (before each equation)
%  \int_{0}^{t}(t-\omega)^{\delta-1} E_{\alpha ,\alpha
%}(- \bar{a} (t-\omega)^{\delta}) E_{\delta}(- \bar{a} \omega^{\delta})  d\omega  \leq  \frac{\Gamma(1+\delta)}{ \bar{a}} \int_{0}^{t}(t-\omega)^{\delta-1} E_{\alpha ,\alpha
%}(- \bar{a} (t-\omega)^{\delta})  \omega^{ - \alpha }  d\omega  \\
%\leq  \frac{\Gamma(1+\delta) \Gamma(1-\alpha)}{\bar{a}}  E_{\delta}(-\bar{a} t^{\delta}),
%\end{array}
%\end{equation}
and from the condition on the kernels, the relation  (\ref{s5}) becomes
\begin{equation*}
\begin{array}{c}
\frac{V(t)}{E_{\delta}(-\xi t^{\delta})} \leq 3 V_{0}  + \frac{ c^{*} a^{*}}{\xi} V_{0} \Gamma(1+ \delta) \Gamma(1- \delta)
+ \Omega \sup\limits_{- \infty < \lambda \leq t}\frac{V(\lambda )}{E_{\delta}(-\xi  \lambda^{\delta})},
\end{array}
\end{equation*}
and thus
\begin{equation}\label{f7}
\begin{array}{c}
\sup\limits_{- \infty < \lambda \leq t} \frac{V(\lambda)}{E_{\delta}(-\xi \lambda^{\delta})} \leq 3 V_{0}  + \frac{ c^{*} a^{*}}{\xi} V_{0} \Gamma(1+ \delta) \Gamma(1- \delta)
+ \Omega \sup\limits_{- \infty < \lambda \leq t}\frac{V(\lambda )}{E_{\delta}(-\xi  \lambda^{\delta})}.
\end{array}
\end{equation}
%with $N=    \frac{1}{E_{\delta}(-\xi t^{\alpha})}  \int_{0}^{t}(t-\omega)^{\delta-1}
% E_{\alpha
%,\alpha }(-\xi (t-\omega)^{\delta})
%\int\nolimits_{-\infty}^{0 } K(\omega-\lambda) E_{\delta}(- \xi \lambda^{\alpha}) d \lambda d\omega.$\\
%or, the relation  (\ref{ss5}) becomes
%\begin{equation}\label{ff7}
%\begin{array}{c}
%\frac{V(t)}{E_{\delta}(-\bar{a} t^{\delta})} \leq 3 V_{0}  +  \bar{c}  V_{0} \Gamma(1+\delta) \Gamma(1-\delta) + N_{2} \sup\limits_{- \infty < \lambda \leq t}\frac{V(\lambda )}{E_{\delta}(-\bar{a}  \lambda^{\delta})}.
%\end{array}
%\end{equation}
%where
%\begin{eqnarray*}
%N_{1}&=&\frac{ G }{E_{\delta}(-a t^{\delta})} \sum\limits_{q,s=1}^{n_{2}} L_{q}\int_{0}^{t}(t-\omega)^{\delta-1}E_{\alpha
%,\alpha }(- a (t-\omega)^{\delta})\nonumber\\
%& & \times \int\nolimits_{- \infty}^{s }\Big[ d_{qps}%
%\hat{h}_{qps} k_{qps}(\lambda-s  )+ d_{spq} \hat{k}_{spq} h_{spq}(\lambda-s )\Big]
%  E_{\delta}(- a  \lambda^{\delta}) d\lambda d\omega,\\
%  N_{2}&=& \frac{\bar{G} }{E_{\delta}(- \bar{a}  t^{\delta})} \sum\limits_{p,r=1}^{n_{1}} M_{p}\int_{0}^{t}(t-\omega)^{\delta-1}E_{\alpha
%,\alpha }(-  \bar{a} (t-\omega)^{\delta})\nonumber\\
%& & \times  \int\nolimits_{-\infty}^{s }\Big[ \bar{d}_{pqr}
%\hat{\bar{h}}_{pqr} \bar{k}_{pqr}(\lambda -s )+ \bar{d}_{rqp} \hat{\bar{k}}_{rqp} \bar{h}_{rqp}(\lambda-s  )\Big]
% E_{\delta}(- \bar{a}  \lambda^{\delta})  d\lambda d\omega.
%\end{eqnarray*}
Therefore,
\begin{equation}\label{eq8}
\begin{array}{c}
(1-\Omega)\frac{V(t)}{E_{\delta}(-\xi t^{\delta})} \leq \Big[ 3   + \frac{c^{*} a^{*}}{\xi}   \Gamma(1+ \delta) \Gamma(1-\delta) \Big] V_{0}.
\end{array}
\end{equation}
%likewise,  if $V(t)= v(t)$, we find for $t\in[0, \theta]$
%\begin{equation}\label{eqq8}
%\begin{array}{c}
%(1-\OMEGA_{2})\frac{V(t)}{E_{\delta}(-\bar{a} t^{\delta})} \leq \Big[   3  +  \bar{c}   \Gamma(1+\delta) \Gamma(1-\delta)  \Big] V_{0}.
%\end{array}
%\end{equation}
In case of $t\in [\mu, 2 \mu]$, we note that
 \begin{equation}\label{eq9}
 \begin{array}{c}
 % \nonumber to remove numbering (before each equation)
   V(t-\mu)  \leq W^{*} \frac{E_{\delta}(-\xi  (t-\mu)^{\delta})}{E_{\delta}(- \xi  t^{\delta})} V_{0} E_{\delta}(- \xi  t^{\delta})
   \leq W^{*} B V_{0} E_{\delta}(- \xi t^{\delta}),
 \end{array}
 \end{equation}
where
\begin{equation*}
\begin{array}{c}
W^{*} = \frac{3+ \frac{c^{*} a^{*}}{\xi}  \Gamma(1+\delta) \Gamma(1-\delta) }{1-\Omega},\\
\frac{E_{\delta}(-\xi  (t-\mu)^{\delta})}{E_{\delta}(- \xi t^{\delta})} \leq
   \frac{1}{E_{\delta}(- \xi t^{\delta})} \leq \frac{1}{E_{\delta}(- \xi (2 \mu)^{\delta})} \leq 1+ \xi  \Gamma(1- \delta) (2 \mu)^{\delta}  =:B.
\end{array}
\end{equation*}
In view of the relations  (\ref{ess5}), (\ref{g1}) and (\ref{eq9}), we find
\begin{equation}\label{f6}
\begin{array}{c}
V(t) \leq 2 V_{0} E_{\delta}(- \xi  t^{\delta}) + c^{*} B V_{0} W^{*}  E_{\delta}(- \xi t^{\delta})
  +  a^{*} c^{*}   B V_{0} W^{*}
  \frac{ \Gamma(1+ \delta) \Gamma(1- \delta) }{\xi} E_{\delta}(- \xi t^{\delta})\\
 +  \int_{0}^{t}(t-\omega)^{\delta-1}E_{\delta
,\delta}(- \xi (t-\omega)^{\delta})    \int\nolimits_{0}^{\infty }K(\lambda)
V(\omega-\lambda )  d\lambda  d\omega.
\end{array}
\end{equation}
%or by application of  (\ref{esss5}), (\ref{eqq9}) and (\ref{gg1}), we find
%\begin{equation}\label{f6}
%\begin{array}{c}
%V(t) \leq 2 V_{0} E_{\delta}(- \bar{a}  t^{\delta}) + \bar{c} \bar{B} V_{0} \frac{3+ \bar{c} \Gamma(1+\delta) \Gamma(1-\delta) }{1-N_{2}}  E_{\delta}(- \bar{a} t^{\alpha})
%+  \bar{a} \bar{c}   \bar{B} V_{0} \frac{3+ \bar{c}  \Gamma(1+\delta) \Gamma(1-\delta) }{1-N_{2}} \frac{ \Gamma(1+\delta) \Gamma(1-\delta) }{\bar{a}} E_{\delta}(- \bar{a} t^{\alpha})   \\
% + \bar{G} \sum\limits_{p=1}^{n_{1}}   M_{p}\int_{0}^{t}(t-\omega)^{\delta-1}E_{\alpha
%,\alpha }(-  \bar{a} (t-\omega)^{\delta}) \sum\limits_{q=1}^{n_{2}} \sum\limits_{r=1}^{n_{1}}  \Big(  \int\nolimits_{0}^{\infty }\Big[ \bar{d}_{pqr}
%\hat{\bar{h}}_{pqr} \bar{k}_{pqr}(\lambda )+ \bar{d}_{rqp} \hat{\bar{k}}_{rqp} \bar{h}_{rqp}(\lambda )\Big]
%V(\omega-\lambda )  d\lambda \Big) d\omega.
%\end{array}
%\end{equation}
Therefore, as $\xi \leq a^{*}$
\begin{equation}\label{eq10}
\begin{array}{c}
(1-\Omega) \frac{V(t)}{E_{\delta}(-\xi t^{\delta})} \leq 2 V_{0}  + \frac{c^{*} a^{*}}{\xi}  B V_{0} \Big[1+  \Gamma(1+\delta) \Gamma(1-\delta)  \Big] W^{*} \\
\leq  2 V_{0}  + 3  B V_{0} W \Big(\frac{a^{*}c^{*}}{\xi}\Big) + 3 B V_{0} W^{2} \Big(\frac{a^{*}c^{*}}{\xi}\Big)^{2},
\end{array}
\end{equation}
where $W=\frac{1+\Gamma(1+\delta) \Gamma(1-\delta) }{1-\Omega}.$\\
%similarly, if $V(t)=v(t)$, we obtain for $t\in[\theta, 2 \theta]$
%\begin{equation}\label{eqq10}
%\begin{array}{c}
%(1-N_{2}) \frac{V(t)}{E_{\delta}(-\bar{a} t^{\alpha})}
%%\leq 2 V_{0}  + \bar{c} \bar{B} V_{0} \Big[1+  \Gamma(1+\delta) \Gamma(1-\delta)  \Big] \frac{3+ \bar{c}  \Gamma(1+\delta) \Gamma(1-\delta) %}{1-N_{2}}, \\
%\leq  2 V_{0}  + 3  \bar{B} V_{0} \frac{\Big[1+  \Gamma(1+\delta) \Gamma(1-\delta)  \Big]}{1-N_{2}} \bar{c} + 3 \bar{B} V_{0} \frac{\Big[1+  \Gamma(1+\delta) \Gamma(1-\delta)  \Big]^{2}}{1-N_{2}} \bar{c}^{2},\\
%\bar{B}:= 1+ \bar{a}  \Gamma(1-\delta) (2 \theta)^{\alpha}.
%\end{array}
%\end{equation}
For $t\in [2 \mu, 3 \mu]$, from the estimation $\frac{t^{\delta}}{(t-\mu)^{\delta}} \leq 2^{\delta}$ and the relation (\ref{mc1}), we obtain
a new estimation of $E_{\delta}(- \xi(t- \mu)^{\delta} )/ E_{\delta}(- \xi t^{\delta})$
\begin{equation}\label{eqq11}
\begin{array}{c}
  \frac{E_{\delta}(- \xi (t- \mu)^{\delta})}{E_{\delta}(- \xi t^{\delta})} \leq \frac{1+ \xi  \Gamma(1-\delta) t^{\delta}}{1+ \xi  \Gamma(1+\delta)^{-1} (t-\mu)^{\delta}} \leq \frac{\Gamma(1+\delta)}{ \xi \mu^{\delta}}
   + \frac{ \Gamma(1+\delta) \Gamma(1-\delta) t^{\delta}}{(t- \mu)^{\delta}}
   \leq \Gamma(1+\delta) \Big[\frac{1}{\xi \mu^{\delta}} + 2^{\delta} \Gamma(1-\delta)    \Big]
   \end{array}
   \end{equation}
%Observe that (\ref{eqq11}) fulfills  for any $t\geq 2\mu$ and $\Gamma(1+\delta)$  may be ignored, because it is very approch to and less than $1$.
or
\begin{equation}\label{eq12}
\begin{array}{c}
\frac{E_{\delta}(- \xi (t- \mu)^{\delta})}{E_{\delta}(- \xi t^{\delta})}   \leq  \frac{1}{\xi \mu^{\delta}} + 2^{\delta} \Gamma(1-\delta)   = :F, \; t \in [ 2 \mu, 3 \mu].
\end{array}
\end{equation}
%Likewise, we have for $t\in [2 \theta, 3 \theta]$
%\begin{equation}\label{esa11}
%\begin{array}{c}
%  \frac{E_{\delta}(- \bar{a} (t- \theta)^{\delta})}{E_{\delta}(- \bar{a} t^{\delta})} \leq \frac{1+ \bar{a}  \Gamma(1-\delta) t^{\delta}}{1+ \bar{a}  \Gamma(1+\delta)^{-1} (t- \theta)^{\delta}} \leq \frac{\Gamma(1+\delta)}{ \bar{a} \theta^{\delta}} + \frac{ \Gamma(1+\delta) \Gamma(1-\delta) t^{\delta}}{(t-\theta)^{\delta}} \\
%   \leq \Gamma(1+\delta) \Big[\frac{1}{\bar{a} \theta^{\delta}} + 2^{\delta} \Gamma(1-\delta)    \Big]
% \leq \frac{1}{\bar{a} \theta^{\delta}} + 2^{\delta} \Gamma(1-\delta)    := \bar{F}, \; t \geq 2 \theta.
%\end{array}
%\end{equation}
Therefore, (\ref{ess5}) implies
\begin{equation}\label{eq13}
\begin{array}{c}
V(t)  \leq  2 V_{0}  E_{\delta}(- \xi t^{\delta}) + \frac{c^{*} F}{1-\Omega} \Big[  2 V_{0} + 3 B V_{0}  W  \Big(\frac{a^{*}c^{*}}{\xi}\Big) + 3 B V_{0} W^{2} (1-\Omega) \Big(\frac{a^{*}c^{*}}{\xi}\Big)^{2}  \Big]  E_{\delta}(- \xi t^{\delta})\\
+ c^{*} a^{*}F \frac{\Gamma(1+\delta) \Gamma(1-\delta)}{\xi(1- \Omega)}\Big [ 2 V_{0} +3 B V_{0} W \Big(\frac{a^{*}c^{*}}{\xi}\Big) + 3 B V_{0} W^{2} (1-\Omega) \Big(\frac{a^{*}c^{*}}{\xi}\Big)^{2}\Big]E_{\delta}(- \xi t^{\delta}) \\
 + \int_{0}^{t}(t-\omega)^{\delta-1}E_{\delta
,\delta}(- \xi (t-\omega)^{\delta})    \int\nolimits_{0}^{\infty }
K(\lambda) V(\omega-\lambda )  d\lambda  d\omega.
\end{array}
\end{equation}
Hence
\begin{equation*}
\begin{array}{c}
(1- \Omega)\frac{V(t)}{ E_{\delta}(- \xi t^{\delta})}  \leq  2 V_{0}  + \Big(\frac{a^{*}c^{*}}{\xi}\Big) F W \Big[  2 V_{0} + 3 B V_{0}  W  \Big(\frac{a^{*}c^{*}}{\xi}\Big) + 3 B V_{0} W^{2} (1-\Omega) \Big(\frac{a^{*}c^{*}}{\xi}\Big)^{2}  \Big].
\end{array}
\end{equation*}
%or
%\begin{equation*}
%\begin{array}{c}
%(1- N_{2})\frac{|V(t)|}{ E_{\delta}(- \bar{a} t^{\delta}) } \leq  2 V_{0} + \bar{c} \bar{F} \bar{W}  \Big[  2 V_{0} + 3 \bar{B} V_{0}  \bar{W}  \bar{c} + 3 \bar{B} V_{0} \bar{W}^{2} (1-N_{2}) \bar{c}^{2}  \Big].
%\end{array}
%\end{equation*}
Thus
\begin{equation}\label{eq15}
\begin{array}{c}
(1- \Omega)\frac{V(t)}{ E_{\delta}(- \xi t^{\delta})} \leq 3 B V_{0} \Big[ 1+  F W \Big(\frac{a^{*}c^{*}}{\xi}\Big) + \Big(F W \Big(\frac{a^{*}c^{*}}{\xi}\Big)\Big)^{2} + \Big(F W \Big(\frac{a^{*}c^{*}}{\xi}\Big)\Big)^{3}\Big].
\end{array}
\end{equation}
%Likewise, if $V(t)=v(t)$,  we find for $t\in[2\theta, 3\theta]$
%\begin{equation}\label{eqqn15}
%\begin{array}{c}
%(1- N_{2})\frac{|V(t)|}{ E_{\delta}(- \bar{a} t^{\delta}) } \leq    3 \bar{B} V_{0} \Big[ 1+  \bar{F} \bar{W} \bar{c} + (\bar{F} \bar{W} \bar{c})^{2} + (\bar{F} \bar{W} \bar{c})^{3}\Big],
%\end{array}
%\end{equation}
%with $\bar{F} :=\frac{1}{\bar{a} \theta^{\delta}} + 2^{\delta} \Gamma(1-\delta), \; \bar{W}=\frac{1+\Gamma(1+\delta) \Gamma(1-\delta) }{1-N_{2}}.$\\
 We claim that
\begin{equation}\label{cl1}
\begin{array}{c}
  (1-\Omega) \frac{V(t)}{E_{\delta}(- \xi t^{\delta})} \leq 3 B V_{0} \sum\limits_{l=0}^{k} \Big(F W \Big(\frac{a^{*}c^{*}}{\xi}\Big) \Big)^{l}, \;  t \in [(k-1) \mu, k\mu].
  \end{array}
\end{equation}
%or for $t \in [(n-1) \theta, n \theta]$, we get
%\begin{equation}\label{cll1}
%\begin{array}{c}
%  (1-N_{2}) \frac{|V(t)|}{E_{\delta}(- \bar{a} t^{\delta})} \leq  3 \bar{B} V_{0} \sum\limits_{k=0}^{n_{1}} (\bar{F} \bar{W} \bar{c}  )^{k}.
%  \end{array}
%\end{equation}
By virtue of the estimations   (\ref{eq8}), (\ref{eq10}) and (\ref{eq15}), the relation (\ref{cl1}) is valid  for $k=1,2$ and $3$ respectively.
%\Big( or by  (\ref{eqq8}), (\ref{eqq10}) and (\ref{eqqn15}), the claim is true for $n=1,2$ and $3$ respectively \Big).
Assuming  that it is valid for $t\in [(k-1) \mu, k\mu]$ and we will prove it
%(or on $[(n-1) \theta, n \theta]$). \\
for $t\in [k \mu, (k+1) \mu]$.
 From  the relations (\ref{ess5}), (\ref{g1}) and (\ref{cl1}), we get
\begin{equation*}
\begin{array}{c}
V(t) \leq 2 V_{0} E_{\delta}(-\xi t^{\delta}) +  3B F c^{*} \frac{V_{0}}{1-\Omega} \sum\limits_{l=0}^{k} \Big(F W \Big(\frac{a^{*}c^{*}}{\xi}\Big)  \Big)^{l}  E_{\delta}(-\xi t^{\delta})\\
+ 3B F \Big(\frac{a^{*}c^{*}}{\xi}\Big)   \frac{V_{0}}{1-\Omega} \Gamma(1+\delta) \Gamma(1-\delta) \sum\limits_{l=0}^{k} \Big(F W \Big(\frac{a^{*}c^{*}}{\xi}\Big) \Big)^{l}  E_{\delta}(- \xi t^{\delta}) \\
 + \int_{0}^{t}(t-\omega)^{\delta-1}E_{\delta
,\delta}(- \xi (t-\omega)^{\delta})    \int\nolimits_{0}^{\infty } K(\lambda)
V(\omega-\lambda )  d\lambda d\omega.
\end{array}
\end{equation*}
%or by (\ref{esss5}), (\ref{gg1}) and (\ref{cll1}), we get for $t\in[ n \theta, (1+n) \theta]$
%\begin{equation*}
%\begin{array}{c}
%V(t) \leq 2 V_{0} E_{\delta}(-\bar{a} t^{\delta}) +  3 \bar{B} \bar{F} \bar{c} \frac{V_{0}}{1-N_{2}} \sum\limits_{k=0}^{n_{1}} (\bar{F} \bar{W} \bar{c} )^{k}  E_{\delta}(-\bar{a} t^{\delta})  \\
% + 3 \bar{B} \bar{F} \bar{c}  \frac{V_{0}}{1-N_{2}} \Gamma(1+\delta) \Gamma(1-\delta)  \sum\limits_{k=0}^{n_{1}} ( \bar{F} \bar{W} \bar{c} )^{k}  E_{\delta}(- \bar{a} t^{\delta}) \\
% +  \bar{G}  \sum\limits_{p=1}^{n_{1}} M_{p}\int_{0}^{t}(t-\omega)^{\delta-1}E_{\alpha
%,\alpha }(- \bar{a}(t-\omega)^{\delta})  \sum\limits_{r=1}^{n_{1}} \sum\limits_{q=1}^{n_{2}}  \Big(  \int\nolimits_{0}^{\infty }\Big[ \bar{d}_{pqr}
%\hat{\bar{h}}_{pqr} \bar{k}_{pqr}(\lambda )+ \bar{d}_{rqp} \hat{\bar{k}}_{rqp} \bar{h}_{rqp}(\lambda )\Big]
%|V(\omega-\lambda ) | d\lambda \Big)d\omega.
%\end{array}
%\end{equation*}
Consequently,
\begin{equation*}
\begin{array}{c}
(1-\Omega) \frac{V(t) }{E_{\delta}(-\xi t^{\delta})} \leq 2 V_{0} + 3 B  F \Big(\frac{a^{*}c^{*}}{\xi}\Big) \frac{V_{0}}{1- \Omega}\Big[ 1+ \Gamma(1+\delta) \Gamma(1-\delta) \Big] \sum\limits_{l=0}^{k} \Big(F W \Big(\frac{a^{*}c^{*}}{\xi}\Big)   \Big)^{l}.
\end{array}
\end{equation*}
%or
%\begin{equation*}
%\begin{array}{c}
%(1-N_{2}) \frac{|V(t) |}{E_{\delta}(-\bar{a} t^{\delta})} \leq 2 V_{0} + 3 \bar{B} \bar{c} \bar{F} \frac{V_{0}}{1- N_{2}}\Big[ 1+ \Gamma(1+\delta) \Gamma(1-\delta) \Big] \sum\limits_{k=0}^{n_{1}} (\bar{F} \bar{W} \bar{c}  )^{k}.
%\end{array}
%\end{equation*}
Then, by the definition of $W$
\begin{equation*}
\begin{array}{c}
(1-\Omega) \frac{V(t)}{E_{\delta}(- \xi  t^{\delta})} \leq 3 B  V_{0} \Big\{ 1+ \Big(\frac{a^{*}c^{*}}{\xi}\Big)  F W    \sum\limits_{l=0}^{k} \Big(F W \Big(\frac{a^{*}c^{*}}{\xi}\Big) \Big)^{l}  \Big\}
= 3  B V_{0}  \sum\limits_{l=0}^{k+1} \Big(F W \Big(\frac{a^{*}c^{*}}{\xi}\Big)  \Big)^{l}.
\end{array}
\end{equation*}
%In like manner, if $V(t)=v(t)$, we can prove  that for $t\in[n \theta, (n+1)\theta]$
%\begin{equation*}
%\begin{array}{c}
%(1-N_{2}) \frac{V(t)}{E_{\delta}(-\bar{a}  t^{\delta})} \leq 3 \bar{B} V_{0} \Big\{ 1+ \bar{c} \bar{F} \bar{W}    \sum\limits_{k=0}^{n_{1}} (\bar{F} \bar{W} \bar{c} )^{k}  \Big\}
%= 3  \bar{B} V_{0}  \sum\limits_{k=0}^{n+1} (\bar{F} \bar{W} \bar{c} )^{k}.
%\end{array}
%\end{equation*}
In light of the conditions indicated in  Theorem \ref{th1}, the series is convergent.
%The series is convergent if $ F W \Big(\frac{a^{*}c^{*}}{\xi}\Big) < 1$ and the claim is proved.

%Notice that, we proceed in  the same manner in the case of $V(t)=v(t)$ using the sub-intervals  $[k \theta, (k+1) \theta]$ instead of $[k \mu, %(k+1) \mu], \; 0 \leq k  \leq n.$
 \end{proof}
%%%%%%%%%%%%%%%%%%%%%%%%%%%%%%%%%%    %%%%%%%%%%%%%%%%%%%%%%%%%%%%
%%%%%%%%%%%%%%%%%%%%%%%%%%%%%%%%%%%%%%%%%%%%%%%%%%%%%%%%%%%%%%%%%%
%%%%%%%%%%%%%%%%%%%%%%%%%%%%%%%%%%%%%%%%%%%%%%%%%%%%%%%%%%%%%%%%%%

\section{Unbounded activation functions}
%In this section, we shall examine the Mittag Leffler stability of the system (\ref{hfn1}) employing the same method\omega as in the previous section, %while the activation functions are unbounded.
%For  one-order neural networks, the non-boundedness condition is always considered, but  it is challenging to assume it for the higher-order neural networks due to the non-linearity terms.
Unbounded activation functions in case of higher-order NNs are not easy to deal with because of the nonlinear terms. This is in contrast to the lower-order case.
 To address this issue, we shall use some analytical techniques based on suitable evaluations and properties of the Mittag-Leffler functions. The notation below will be utilized
\begin{equation*}
\begin{array}{c}
  U :=A\frac{\Gamma(1+\delta) \Gamma(1-\delta)}{\xi }  , \; A=\max\Big\{ \sum\limits_{p=1}^{n_{1}}  a_{p}, \sum\limits_{q=1}^{n_{2}}  \bar{a}_{q} \Big\},\;
  \Lambda := \sum\limits_{k=0}^{+\infty} \Big[   2 B^{*} c^{*} (1+ U) \Big]^{k},  \;
   B^{*} := \max \{ B, F   \}, \\
   %a= \min\limits_{1\leq i \leq n}\{   a_{p} \}, \; \bar{a}= \min\limits_{1\leq j \leq m}\{   \bar{a}_{q} \}, \; \xi= \min\{ a, \bar{a} \}, \; c^{*}= \max\{ c, \bar{c}\},
\theta= \sum\limits_{p;q,s=1}^{n_{1};n_{2}}  d_{qps}  \Big[ L_{q} \hat{h}_{qps} g_{s}(y_{s}^{*})+ L_{s} \hat{k}_{qps} g_{q}(y_{q}^{*}) \Big], \; \nu=  \sum\limits_{p;q,s=1}^{n_{1};n_{2}}   d_{qps} L_{q} L_{s}, \\
\bar{\theta} =  \sum\limits_{p,r; q=1}^{n_{1};n_{2}}  \bar{d}_{pqr}  \Big[ M_{p} \hat{\bar{h}}_{pqr} \bar{g}_{r}(x_{r}^{*})+ M_{r} \hat{\bar{h}}_{pqr} \bar{g}_{p}(x_{p}^{*}) \Big], \;  \bar{\nu}= \sum\limits_{p,r;q=1}^{n_{1};n_{2}}   \bar{d}_{pqr} M_{p} M_{r}, \\
k(t)= \max\limits_{1\leq  p\leq n_{1}; 1\leq q, s\leq n_{2}}\{ k_{qps}(t)\},  h(t)= \max\limits_{1\leq  p\leq n_{1}; 1\leq q,s\leq n_{2}}\{ h_{qps}(t)\}, \; K(t)= \max\{k(t), h(t) \}, \\
\bar{k}(t)=\max\limits_{1 \leq p,r \leq n_{1}; 1 \leq q \leq n_{2}}\{  \bar{k}_{pqr}(t)\},  \bar{h}(t)=\max\limits_{1 \leq p, r \leq n_{1}; 1\leq q\leq n_{2}}\{ \bar{h}_{rqp}(t) \}, \; H(t)=\max\{\bar{k}(t), \bar{h}(t)\},\\
 \pi =\max\{\theta, \bar{\theta}\}, \kappa=\max\{ \nu, \bar{\nu}\}, \;
K^{*}(t)=\max\{K(t), H(t)\}, \; k^{*}(t)=\max\{k(t), \bar{k}(t)\}, \\ h^{*}(t)=\max\{h(t), \bar{h}(t)\}, \; \hat{h}^{*}= \int_{0}^{\infty} h^{*}(t) dt.
\end{array}
\end{equation*}
\textbf{(A5)} Let $\Omega$ and $c^{*}$ be positive constants such that $ \Omega \pi  <\frac{1}{4},$ $c^{*}  < \min \Big\{ 1, \frac{1}{2 B^{*}(1+ U)}\Big\},$
\begin{equation}\label{k1}
  \int_{0}^{t} (t-\omega)^{\delta-1} E_{\delta,\delta} (-\xi(t-\omega)^{\delta}) \Big( \int_{-\infty}^{\omega} E_{\delta} (- \xi \lambda^{\delta}) K^{*}(\omega-\lambda)  d\lambda \Big) d\omega
   \leq  \Omega E_{\delta} (-  \xi t^{\delta}), \; t> 0.
  \end{equation}
\begin{thm}\label{th2}
Assuming that  \textbf{(A1)}, \textbf{(A3)} and \textbf{(A5)} hold.
 Then, the solutions of system  (\ref{hfn1}) are locally $\delta-$Mittag-Leffler stable, which means that for  some constant $C>0$
\begin{eqnarray*}
% \nonumber to remove numbering (before each equation)
  u(t) &\leq& C V_{0}E_{\delta} (-\xi t^{\delta}), \;  v(t) \leq C V_{0} E_{\delta} (-\xi t^{\delta}), \quad t>0,
\end{eqnarray*}
for small $V_{0}$   and  positive constant $\xi$.
\end{thm}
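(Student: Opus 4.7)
The plan is to mirror the strategy of Theorem~\ref{th1}, with two structural modifications needed to accommodate the unboundedness of $g_q$ and $\bar g_p$. First I would translate to the origin by setting $u_p = x_p - x_p^*$, $v_q = y_q - y_q^*$, rewrite the system in the add--subtract neutral form (\ref{s1})--(\ref{s2}), and invert via Laplace transform to recover the integral representations (\ref{l1})--(\ref{l2}).

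The crucial new ingredient is the identity
\begin{equation*}
g_q(y_q) g_s(y_s) - g_q(y_q^*) g_s(y_s^*)
= g_s(y_s^*) \tilde g_q(v_q) + g_q(y_q^*) \tilde g_s(v_s) + \tilde g_q(v_q) \tilde g_s(v_s),
\end{equation*}
and its $\bar g$ counterpart. Plugged into the product of the $k_{qps}$- and $h_{qps}$-integrals and summed in $p,q,s$, it produces a linear convolution controlled by the constant $\theta$ (respectively $\bar\theta$) and a genuinely bilinear term whose modulus, after applying (A3), is bounded by $\nu$ times $\bigl(\int_0^\infty K^*(\omega) V(t-\omega)\,d\omega\bigr)^2$. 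In this bilinear piece I would estimate one factor pointwise by its inductive ceiling, which is of order $V_0 \Lambda$, and keep the other as a convolution, so that the quadratic contribution reduces to a linear convolution multiplied by the small factor $V_0$. The integral condition (\ref{k1}) of (A5), applied with kernel $K^*$, then controls the resulting expression by $\Omega \kappa V_0 \Lambda \cdot \sup_{s\le t} V(s)/E_\delta(-\xi s^\delta)$.

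With this decomposition in hand I would replay the interval bootstrap of Theorem~\ref{th1}. Setting $V = \max\{u,v\}$, the neutral shift $c^* V(t-\mu)$ together with the fractional convolution $c^* a^* \int_0^t (t-\omega)^{\delta-1} E_{\delta,\delta}(-\xi(t-\omega)^\delta) V(\omega-\mu)\,d\omega$ is processed using Mainardi (\ref{mc1}) and (\ref{formula1}) as in (\ref{g1}), generating the factor $c^*(1+U)$ per interval, with $U = A\Gamma(1+\delta)\Gamma(1-\delta)/\xi$ absorbing the $a^*$-dependent part. The ratio $E_\delta(-\xi(t-\mu)^\delta)/E_\delta(-\xi t^\delta)$ is controlled by $B$ on $[\mu,2\mu]$ and by $F$ for $t\ge 2\mu$, so $B^* = \max\{B,F\}$ serves as the geometric amplifier per step. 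My inductive claim is that on $[(k-1)\mu, k\mu]$
\begin{equation*}
\bigl(1 - \Omega\pi - \Omega\kappa V_0 \Psi_{k-1}\bigr)\frac{V(t)}{E_\delta(-\xi t^\delta)}
\leq 3B^* V_0 \sum_{l=0}^{k}\bigl[2B^* c^*(1+U)\bigr]^l =: \Psi_k,
\end{equation*}
propagated through (\ref{s1})--(\ref{s2}) and the linear/bilinear split just described, provided the bracket on the left remains bounded away from $0$.

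The principal obstacle is keeping that bracket uniformly positive across all intervals. The assumption $\Omega\pi < \tfrac14$ in (A5) leaves room at least $\tfrac34$ from the linear contribution alone, and $c^* < 1/(2B^*(1+U))$ forces $\Lambda = \sum_{k\ge 0}[2B^* c^*(1+U)]^k$ to converge, so $\Psi_k \le 3B^* V_0 \Lambda$ remains bounded uniformly in $k$. Imposing the local smallness $\Omega\kappa V_0 \cdot 3B^* \Lambda \le \tfrac14$ on the initial data then keeps the bracket $\ge \tfrac12$ throughout the induction, closes the recursion, and delivers the local Mittag--Leffler bound $V(t) \le 6 B^* \Lambda V_0 E_\delta(-\xi t^\delta)$, which is the conclusion of the theorem with $C = 6B^* \Lambda$.
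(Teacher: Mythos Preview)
Your overall architecture matches the paper's: the same algebraic splitting of the nonlinearity into linear-plus-bilinear pieces, the same Laplace/Mittag--Leffler integral representation, and the same interval-by-interval bootstrap driven by the ratio $E_\delta(-\xi(t-\mu)^\delta)/E_\delta(-\xi t^\delta)$. The discrepancy is in how the bilinear term is tamed, and there your plan has a genuine gap.

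You propose to bound one factor of the bilinear convolution ``pointwise by its inductive ceiling'' $\Psi_{k-1}$. But on the interval $[(k-1)\mu,k\mu]$ the outer integral runs over $\omega\in[0,t]$, and for $\omega$ close to $t$ the inner convolution $\int_0^\infty h^*(\lambda)V(\omega-\lambda)\,d\lambda$ samples $V$ at times inside the \emph{current} interval, where no bound is yet available. Using $\Psi_{k-1}$ there is circular; using $\Psi_k$ is what you are trying to prove. Pure induction on $k$ cannot close this loop.

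The paper breaks the circularity with a continuation argument that your proposal omits. One fixes $\eta>0$ so that $\Omega\eta\kappa\hat h^*<\tfrac14$ and requires the initial data to satisfy $V_0\Lambda<\tfrac{\eta}{4}$. By continuity there is a maximal $t_*$ with $V(t)\le\eta$ on $[0,t_*]$; on that interval the bilinear term is dominated by $\eta\hat h^*\kappa$ times a \emph{single} linear convolution (one factor is crudely bounded by $\eta\hat h^*$), and the whole nonlinear contribution is absorbed into a coefficient $\Omega(\pi+\eta\kappa\hat h^*)<\tfrac12$ multiplying $\sup_{\lambda\le t}V(\lambda)/E_\delta(-\xi\lambda^\delta)$. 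The interval bootstrap then runs exactly as you describe, yielding $V(t)\le 2\Lambda V_0<\tfrac{\eta}{2}$ on $[0,t_*]$, which forces $t_*=\infty$. This is the missing idea; once you insert it, the rest of your scheme goes through, and your smallness condition on $V_0$ is essentially the paper's condition $V_0\Lambda<\eta/4$ rephrased.
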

\begin{proof}
%The nonlinear terms may be written as follows
It is obvious that
\begin{equation}\label{nl1}
\begin{array}{c}
 \sum\limits_{q,s=1}^{n_{2}}d_{qps}\int\nolimits_{0}^{\infty
} k_{qps}(\omega) g_{q}( y_{q}(t-\omega)) d\omega \int\nolimits_{0}^{\infty} h_{qps}(\omega) g_{s}( y_{s}(t-\omega)) d\omega
-\sum\limits_{q,s=1}^{n_{2}} d_{qps} \hat{k}_{qps} g_{q}( y_{q}^{\ast
}) \hat{h}_{qps} g_{s}( y_{s}^{\ast })\\
 = \sum\limits_{q,s=1}^{n_{2}} d_{qps}
\Bigg\{ \Big[ \int\nolimits_{0}^{\infty }k_{qps}(\omega)  \Big[  g_{q}(y_{q}(t-\omega)) - g_{q}(y_{q}^{*}) \Big] d\omega
\int\nolimits_{0}^{\infty }h_{qps}(\omega) \Big[  g_{s}(y_{s}(t-\omega)) - g_{s}(y_{s}^{*}) \Big] d\omega \\
 +
\hat{h}_{qps} g_{s}(y_{s}^{*}) \int\nolimits_{0}^{\infty }  k_{qps}(\omega) \Big[  g_{q}(y_{q}(t-\omega)) - g_{q}(y_{q}^{*}) \Big]d\omega   \\
+ \hat{k}_{qps} g_{q}(y_{q}^{*}) \int\nolimits_{0}^{\infty }  h_{qps}(\omega) \Big[  g_{s}(y_{s}(t-\omega)) - g_{s}(y_{s}^{*}) \Big]d\omega  \Bigg\}
\end{array}
\end{equation}
and
\begin{equation}\label{nl2}
\begin{array}{c}
 \sum\limits_{p,r=1}^{n_{1}}\bar{d}_{pqr}\int\nolimits_{0}^{\infty
} \bar{k}_{pqr}(\omega) \bar{g}_{p}( x_{p}(t-\omega)) d\omega \int\nolimits_{0}^{\infty} \bar{h}_{pqr}(\omega) \bar{g}_{r}( x_{r}(t-\omega)) d\omega
-\sum\limits_{p,r=1}^{n_{1}} \bar{d}_{pqr} \hat{\bar{k}}_{pqr} \bar{g}_{p}( x_{p}^{\ast
}) \hat{\bar{h}}_{pqr} \bar{g}_{r}( x_{r}^{\ast })\\
 = \sum\limits_{p,r=1}^{n_{1}} \bar{d}_{pqr}
\Bigg\{ \Big[ \int\nolimits_{0}^{\infty }\bar{k}_{pqr}(\omega)  \Big[  \bar{g}_{p}(x_{p}(t-\omega)) - \bar{g}_{p}(x_{p}^{*}) \Big] d\omega
\int\nolimits_{0}^{\infty } \bar{h}_{pqr}(\omega) \Big[  \bar{g}_{r}(x_{r}(t-\omega)) - \bar{g}_{r}(x_{r}^{*}) \Big] d\omega \\
 + \hat{\bar{h}}_{pqr}(\omega) \bar{g}_{r}(x_{r}^{*}) \int\nolimits_{0}^{\infty }  \bar{k}_{pqr}(\omega) \Big[  \bar{g}_{p}(x_{p}(t-\omega)) - \bar{g}_{p}(x_{p}^{*}) \Big]d\omega \\
+  \hat{\bar{k}}_{pqr}(\omega) \bar{g}_{p}(x_{p}^{*}) \int\nolimits_{0}^{\infty }  \bar{h}_{pqr}(\omega) \Big[  \bar{g}_{r}(x_{r}(t-\omega)) - \bar{g}_{r}(x_{r}^{*}) \Big]d\omega  \Bigg\}.
\end{array}
\end{equation}
These identities are very useful. They will facilitate some evaluations below.

According to (\ref{l1}), (\ref{l2}), (\ref{nl1}) and (\ref{nl2}), we obtain
\begin{equation*}
\begin{array}{c}
% \nonumber to remove numbering (before each equation)
 u_{p}(t)-c u_{p}(t-\mu) \leq  E_{\delta}(- a_{p} t^{\delta}) |\tilde{\phi}_{p}(0) - c \tilde{\phi}_{p}(\mu) |
   + c  a_{p} \int_{0}^{t} (t-\omega)^{\delta-1} E_{\delta, \delta}(-  a_{p} (t-\omega)^{\delta})\\ \times  |u_{p}(\omega-\mu) | d\omega
   +   \sum\limits_{q,s=1}^{n_{2}} d_{qps} L_{q} L_{s} \int_{0}^{t} (t-\omega)^{\delta-1} E_{\delta, \delta} (- a_{p} (t-\omega)^{\delta} )
   \Big( \int_{0}^{\infty} k_{qps}(\lambda) |v_{q}(\omega-\lambda)  | d \lambda\\ \times
      \int_{0}^{\infty} h_{qps}(\lambda) |v_{s}(\omega- \lambda)  | d \lambda d\omega   \Big)
    +   \sum\limits_{q,s=1}^{n_{2}} d_{qps} L_{q} \hat{h}_{qps} g_{s}(y_{s}^{*})
  \Big( \int_{0}^{t} (t-\omega)^{\delta-1} E_{\delta, \delta}(- a_{p} (t-\omega)^{\delta})\\ \times
     \int_{0}^{\infty} k_{qps}(\lambda) |v_{q}(\omega- \lambda) | d \lambda d\omega  \Big)
    +   \sum\limits_{q,s=1}^{n_{2}} d_{qps} L_{s} \hat{k}_{qps} g_{q}(y_{q}^{*})
  \Big(  \int_{0}^{t} (t-\omega)^{\delta-1} E_{\delta, \delta}(- a_{p} (t-\omega)^{\delta})\\ \times
   \int_{0}^{\infty} h_{qps}(\lambda) |v_{s}(\omega- \lambda) | d \lambda d\omega \Big)
\end{array}
\end{equation*}
and
\begin{equation*}
\begin{array}{c}
% \nonumber to remove numbering (before each equation)
 v_{q}(t)-\bar{c} v_{q}(t- \mu) \leq  E_{\delta}(- \bar{a}_{q} t^{\delta}) |\tilde{\varphi}_{q}(0) -
 \bar{c} \tilde{\varphi}_{q}(\mu) |
  + \bar{c}  \bar{a}_{q} \int_{0}^{t} (t-\omega)^{\delta-1} E_{\delta, \delta}(-  \bar{a}_{q} (t-\omega)^{\delta})\\ \times
   |v_{q}(\omega-\mu) | d\omega
  +   \sum\limits_{p,r=1}^{n_{1}} \bar{d}_{pqr} M_{p} M_{r} \int_{0}^{t} (t-\omega)^{\delta-1} E_{\delta, \delta} (- \bar{a}_{q} (t-\omega)^{\delta} )
   \Big( \int_{0}^{\infty} \bar{k}_{pqr}(\lambda) |u_{p}(\omega-\lambda)  | d \lambda\\ \times
      \int_{0}^{\infty} \bar{h}_{pqr}(\lambda) |u_{r}(\omega- \lambda)  | d \lambda d\omega \Big)
  +  \sum\limits_{p,r=1}^{n_{1}} \bar{d}_{pqr} M_{p} \hat{\bar{h}}_{pqr} \bar{g}_{r}(x_{r}^{*})
  \Big(  \int_{0}^{t} (t-\omega)^{\delta-1} E_{\delta, \delta}(- \bar{a}_{q} (t-\omega)^{\delta})\\ \times
    \int_{0}^{\infty} \bar{k}_{pqr}(\lambda) |u_{p}(\omega- \lambda) | d \lambda d\omega  \Big)
    +   \sum\limits_{p,r=1}^{n_{1}} \bar{d}_{pqr} M_{r} \hat{\bar{k}}_{pqr} \bar{g}_{p}(x_{p}^{*})
  \Big(  \int_{0}^{t} (t-\omega)^{\delta-1} E_{\delta, \delta}(- \bar{a}_{q} (t-\omega)^{\delta})\\ \times
     \int_{0}^{\infty} \bar{h}_{pqr}(\lambda) |u_{r}(\omega- \lambda) | d \lambda d\omega \Big).
\end{array}
\end{equation*}
%We observe  that after  applying the Lipshitz continuity condition, we still have nonlinear terms. Next,
Using the   notation in the above, we get
\begin{equation*}
\begin{array}{c}
 u(t) \leq  (1+c) u_{0} E_{\delta}(- a t^{\delta})  +  c u(t-\mu) + c \sum\limits_{p=1}^{n_{1}}  a_{p} \int_{0}^{t} (t-\omega)^{\delta-1} E_{\delta, \delta}(-  a  (t-\omega)^{\delta}) u(\omega-\mu)  d\omega \\
   +   \sum\limits_{p;q,s=1}^{n_{1};n_{2}}  d_{qps}  \Big[ L_{q} \hat{h}_{qps} g_{s}(y_{s}^{*})+ L_{s} \hat{k}_{qps} g_{q}(y_{q}^{*}) \Big]  \Big( \int_{0}^{t} (t-\omega)^{\delta-1} E_{\delta, \delta}(- a  (t-\omega)^{\delta})
   \\ \times  \int_{-\infty}^{\omega}
   \max \Big\{ k_{qps}(\omega-\lambda ), h_{qps} (\omega-\lambda) \Big \}
    v(\lambda)  d \lambda d\omega \Big)
   +   \sum\limits_{p;q,s=1}^{n_{1};n_{2}}   d_{qps} L_{q} L_{s} \int_{0}^{t} (t-\omega)^{\delta-1} E_{\delta, \delta} (- a (t-\omega)^{\delta-1} )\\
   \times
  \Big( \int_{0}^{\infty} k_{qps}(\lambda) v(\omega-\lambda)   d \lambda
    \int_{0}^{\infty} h_{qps}(\lambda) v( \omega-\lambda)   d \lambda d\omega\Big)
\end{array}
\end{equation*}
and
\begin{equation*}
\begin{array}{c}
 v(t) \leq (1+\bar{c}) v_{0} E_{\delta}(-\bar{a} t^{\delta})  +  \bar{c} v(t-\mu)
 + \bar{c} \sum\limits_{q=1}^{n_{2}}  \bar{a}_{q} \int_{0}^{t} (t-\omega)^{\delta-1} E_{\delta, \delta}(-  \bar{a}  (t-\omega)^{\delta}) v(\omega-\mu)  d\omega   \\
   +  \sum\limits_{p,r; q=1}^{n_{1};n_{2}}  \bar{d}_{pqr}  \Big[ M_{p} \hat{\bar{h}}_{pqr} \bar{g}_{r}(x_{r}^{*})+ M_{r} \hat{\bar{h}}_{pqr} \bar{g}_{p}(x_{p}^{*}) \Big]  \Big( \int_{0}^{t} (t-\omega)^{\delta-1} E_{\delta, \delta}(- \bar{a} (t-\omega)^{\delta})  \\
   \times  \int_{-\infty}^{\omega}
   \max \Big\{ \bar{k}_{pqr}(\omega-\lambda), \bar{h}_{pqr}  (\omega-\lambda)\Big \} u(\lambda)  d \lambda d\omega \Big)
  +   \sum\limits_{p,r;q=1}^{n_{1};n_{2}}   \bar{d}_{pqr} M_{p} M_{r} \int_{0}^{t} (t-\omega)^{\delta-1} E_{\delta, \delta} (- \bar{a} (t-\omega)^{\delta})
  \\ \times \Big( \int_{0}^{\infty} \bar{k}_{pqr}(\lambda) u(\omega-\lambda)   d \lambda    \int_{0}^{\infty} \bar{h}_{pqr}(\lambda) u(\omega- \lambda)   d \lambda d\omega \Big).
\end{array}
\end{equation*}
Let $V(t)= \max \{ u(t), v(t) \}, \; t\geq 0$ and $V_{0}= \max\{ u_{0}, v_{0}\}$, then
\begin{equation*}
\begin{array}{c}
 u(t) \leq  (1+c) V_{0} E_{\delta}(- a t^{\delta})  +  c V(t-\mu) + c \sum\limits_{p=1}^{n_{1}}  a_{p} \int_{0}^{t} (t-\omega)^{\delta-1} E_{\delta, \delta}(-  a  (t-\omega)^{\delta}) V(\omega-\mu)  d\omega \\
   +   \theta  \Big( \int_{0}^{t} (t-\omega)^{\delta-1} E_{\delta, \delta}(- a  (t-\omega)^{\delta})
    \int_{-\infty}^{\omega}
   K (\omega-\lambda)
    V(\lambda)  d \lambda d\omega \Big)
   +   \nu \int_{0}^{t} (t-\omega)^{\delta-1} E_{\delta, \delta} (- a (t-\omega)^{\delta} )\\
  \times \Big( \int_{0}^{\infty} k(\lambda) V(\omega-\lambda)   d \lambda
    \int_{0}^{\infty} h(\lambda) V( \omega-\lambda)   d \lambda d\omega\Big),
\end{array}
\end{equation*}
\begin{equation*}
\begin{array}{c}
 v(t) \leq (1+\bar{c}) V_{0} E_{\delta}(-\bar{a} t^{\delta})  +  \bar{c} V(t-\mu)
 + \bar{c} \sum\limits_{q=1}^{n_{2}}  \bar{a}_{q} \int_{0}^{t} (t-\omega)^{\delta-1} E_{\delta, \delta}(-  \bar{a}  (t-\omega)^{\delta}) V(\omega-\mu)  d\omega   \\
   +  \bar{\theta} \Big( \int_{0}^{t} (t-\omega)^{\delta-1} E_{\delta, \delta}(- \bar{a} (t-\omega)^{\delta})
   \int_{-\infty}^{\omega}
     H(\omega-\lambda) V(\lambda)  d \lambda d\omega \Big)
  +   \bar{\nu} \int_{0}^{t} (t-\omega)^{\delta-1} E_{\delta, \delta} (- \bar{a} (t-\omega)^{\delta})
  \\ \times \Big( \int_{0}^{\infty} \bar{k}(\lambda) V(\omega-\lambda)   d \lambda    \int_{0}^{\infty} \bar{h}(\lambda) V(s- \lambda)   d \lambda d\omega \Big).
\end{array}
\end{equation*}
Furthermore,
\begin{equation}\label{fnl5}
\begin{array}{c}
V(t) \leq  (1+c^{*}) V_{0} E_{\delta}(- \xi t^{\delta})  +  c^{*} V(t-\mu) + c^{*} A \int_{0}^{t} (t-\omega)^{\delta-1} E_{\delta, \delta}(-  \xi (t-\omega)^{\delta}) V(\omega-\mu)  d\omega \\
   +   \pi   \int_{0}^{t} (t-\omega)^{\delta-1} E_{\delta, \delta}(- \xi  (t-\omega)^{\delta})
    \int_{-\infty}^{\omega}
   K^{*} (\omega-\lambda)
    V(\lambda)  d \lambda d\omega
   +   \kappa \int_{0}^{t} (t-\omega)^{\delta-1} E_{\delta, \delta} (- \xi (t-\omega)^{\delta} )\\
  \times  \int_{0}^{\infty} k^{*}(\lambda) V(\omega-\lambda)   d \lambda
    \int_{0}^{\infty} h^{*}(\lambda) V( \omega-\lambda)   d \lambda d\omega.
\end{array}
\end{equation}
For $\omega\in [0, \mu]$, we have $-\mu\leq \omega- \mu \leq 0$ and assume %w.l.o.g
\begin{equation*}
\begin{array}{c}
  \tilde{\phi}(\omega) := \sum\limits_{p=1}^{n_{1}} | \tilde{\phi}_{p}(\omega)  | \leq u_{0} E_{\delta} (- a (\omega+ \mu)^{\delta}), \quad \omega\in [-\mu, 0],
  \end{array}
  \end{equation*}
%  for $\omega\in [0, \theta]$, we assume %w.l.o.g
\begin{equation*}
\begin{array}{c}
\tilde{\varphi}(\omega) :=  \sum\limits_{q=1}^{n_{2}} | \tilde{\varphi}_{q}(\omega)   | \leq v_{0} E_{\delta} (- \bar{a} (\omega+ \mu)^{\delta}), \quad \omega\in [-\mu, 0].
\end{array}
\end{equation*}
Choosing  $\eta > 0$    such that
  $ \Omega \eta \kappa  \hat{h}^{*}  < \frac{1}{4}$
and $V_{0} \Lambda < \frac{\eta}{4}$, furthermore,  as $V(t)$ is continuous on $[0, t_{*}]$, we have $V(t)\leq \eta $  on $[0, t_{*}]$ with  $t_{*} > 0.$
If $t_{*} \leq \mu$, and $0 < t \leq t_{*}$, then
\begin{equation}\label{esnl1}
\begin{array}{c}
    \int_{0}^{t} (t-\omega)^{\delta-1} E_{\delta, \delta} (- \xi (t-\omega)^{\delta} )
   \int_{0}^{\infty} k^{*}(\lambda) V(\omega-\lambda)   d \lambda    \int_{0}^{\infty} h^{*}(\lambda) V(\omega- \lambda)   d \lambda d\omega \\
    \leq  \eta \hat{h}^{*}   \int_{0}^{t} (t-\omega)^{\delta-1} E_{\delta, \delta} (- \xi(t-\omega)^{\delta} )
   \int_{0}^{\infty} k^{*}(\lambda) V(\omega-\lambda)   d \lambda.
\end{array}
\end{equation}
%or for  $t_{*} \leq \theta$, and $0 < t \leq t_{*}$, then
%\begin{equation}\label{esnl2}
%\begin{array}{c}
%   \int_{0}^{t} (t-\omega)^{\delta} E_{\delta, \delta} (- \bar{a} (t-\omega)^{\delta} )
%   \int_{0}^{\infty} \bar{k}_{pqr}(\lambda) V(\omega-\lambda)   d \lambda      \int_{0}^{\infty} \bar{h}_{pqr}(\lambda) V(s- \lambda)   d \lambda d\omega \\
%   \leq \eta_{2} \hat{\bar{h}}_{pqr}  \int_{0}^{t} (t-\omega)^{\delta} E_{\delta, \delta} (- \bar{a} (t-\omega)^{\delta} )
%   \int_{0}^{\infty} \bar{k}_{pqr}(\lambda) V(\omega-\lambda)   d \lambda,
%\end{array}
%\end{equation}
In view of the estimations (\ref{fnl5}) and  (\ref{esnl1}), we obtain
\begin{equation*}
\begin{array}{c}
 V(t) \leq  (1+c^{*}) V_{0} E_{\delta}(- \xi t^{\delta})  +  c^{*} V(t-\mu)
 + c^{*} V_{0} A \int_{0}^{t} (t-\omega)^{\delta-1} E_{\delta, \delta}(-  \xi (t-\omega)^{\delta}) E_{\delta}(- \xi \omega^{\delta})  d\omega \\
+   \Big( \pi + \eta \kappa  \hat{h}^{*} \Big)  \Big( \int_{0}^{t} (t-\omega)^{\delta-1} E_{\delta, \delta}(- \xi (t-\omega)^{\delta})   \int_{-\infty}^{\omega} K^{*}(\omega-\lambda)  E_{\delta}(- \xi \lambda^{\delta})  \frac{V(\lambda)}{E_{\delta}(- \xi \lambda^{\delta})}  d \lambda d\omega\Big).
   \end{array}
\end{equation*}
Therefore
\begin{equation*}
\begin{array}{c}
 V(t) \leq  \Big[
  1+ 2c^{*}  + c^{*} A \frac{\Gamma(1+\delta) \Gamma(1-\delta) }{ \xi} \Big]V_{0} E_{\delta}(- \xi t^{\delta})
   +    \Big( \pi + \eta \kappa  \hat{h}^{*} \Big)   \sup\limits_{- \infty < \lambda \leq t} \frac{V(\lambda)}{E_{\delta}(-   \xi \lambda^{\delta}) }\\
   \times \Big(  \int_{0}^{t} (t-\omega)^{\delta-1}
   E_{\delta, \delta}(- \xi (t-\omega)^{\delta})   \int_{-\infty}^{\omega}
    K^{*}(\omega-\lambda )E_{\delta}(-  \xi  \lambda^{\delta})   d \lambda d\omega\Big).
\end{array}
\end{equation*}
%or
%\begin{equation*}
%\begin{array}{c}
% V(t)  \leq  \Big[ 1+ 2 \bar{c}
%  + \bar{c} \frac{\Gamma(1+\delta) \Gamma(1-\delta) }{ \bar{a}} \bigg( \sum\limits_{q=1}^{n_{2}}  \bar{a}_{q} \bigg) \Big] V_{0} E_{\delta}(- \bar{a} t^{\delta})
% +  \sum\limits_{q=1}^{n_{2}} \sum\limits_{p,r=1}^{n_{1}}  \bar{d}_{pqr}  \Bigg[ M_{p} \bigg( \eta_{2}  M_{r} \hat{\bar{h}}_{pqr}+  \hat{\bar{h}}_{pqr} \bar{g}_{r}(x_{r}^{*}) \bigg)\\
% + M_{r} \hat{\bar{k}}_{pqr} \bar{g}_{p}(x_{p}^{*}) \Bigg]
%  \sup\limits_{- \infty < \lambda \leq t} \frac{V(\lambda)}{E_{\delta}(-  \bar{a} \lambda^{\delta}) }
% \Big( \int_{0}^{t} (t-\omega)^{\delta-1} E_{\delta, \delta}(- \bar{a} (t-\omega)^{\delta})
%  \int_{-\infty}^{s}
%   \bar{k}(\omega-\lambda )E_{\delta}(-  \bar{a} \lambda^{\delta})
%    d \lambda d\omega \Big).
%\end{array}
%\end{equation*}
In light of the relation (\ref{k1}), we end up with
\begin{equation}\label{fnl7}
\begin{array}{c}
 V(t) \leq  \Big[
  1+ 2c^{*}  + c^{*} A \frac{\Gamma(1+\delta) \Gamma(1-\delta) }{ \xi}  \Big]V_{0} E_{\delta}(- \xi t^{\delta})
  +  \Omega  \Big( \pi + \eta \kappa  \hat{h}^{*} \Big)
    \sup\limits_{- \infty < \lambda \leq t} \frac{V(\lambda)}{E_{\delta}(-  \xi \lambda^{\delta}) }E_{\delta}(- \xi t^{\delta}).
\end{array}
\end{equation}
Dividing by $E_{\delta}(- \xi t^{\delta})$, (\ref{fnl7}) yields
\begin{equation*}
\begin{array}{c}
 \sup\limits_{- \infty < \lambda \leq t} \frac{ V(\lambda)}{E_{\delta}(- \xi \lambda^{\delta}) }
 \leq  \Big[
  1+ 2c^{*}  + A c^{*} \frac{\Gamma(1+\delta) \Gamma(1-\delta) }{ \xi } \Big]V_{0} + \Omega  \Big( \pi + \eta \kappa  \hat{h}^{*} \Big) \sup\limits_{- \infty < \lambda \leq t} \frac{ V(\lambda)}{E_{\delta}(- \xi \lambda^{\delta}) },
\end{array}
\end{equation*}
or
\begin{equation}\label{fnl11}
\begin{array}{c}
  \Big[1-  \Omega  \Big( \pi + \eta \kappa  \hat{h}^{*} \Big)   \Big] \sup\limits_{- \infty < \lambda \leq t} \frac{ V(\lambda)}{E_{\delta}(- \xi \lambda^{\delta}) }
 \leq  \Big[
  1+ 2c^{*}  + A c^{*} \frac{\Gamma(1+\delta) \Gamma(1-\delta) }{ \xi } \Big]V_{0}: = \Lambda_{1} V_{0},
\end{array}
\end{equation}
%or, we divide by $E_{\delta} (- \bar{a} t^{\delta})$and pass to the $ sup$ in (\ref{fnl8}), we obtain
%\begin{equation}\label{fnl12}
%\begin{array}{c}
% (  1-   N_{2}  \bar{T}) \sup\limits_{- \infty < \lambda \leq t} \frac{V(\lambda)}{E_{\delta}(-  \bar{a} \lambda^{\delta}) }
%  \leq   \Big[ 1+ 2 \bar{c}
%  + \bar{c} \frac{\Gamma(1+\delta) \Gamma(1-\delta) }{ \bar{a}} \bigg( \sum\limits_{q=1}^{n_{2}}  \bar{a}_{q} \bigg) \Big] V_{0} := \bar{\Lambda}_{1} V_{0},
%\end{array}
%\end{equation}
with
\begin{equation*}
\begin{array}{c}
  \Lambda_{1} :=  1+ 2 c^{*} + A  c^{*}  \frac{\Gamma(1+\delta)  \Gamma(1-\delta)}{\xi}. % \\
%  \bar{\Lambda}_{1} := \Bigg\{    1+ 2 \bar{c}
%  + \bar{c} \frac{\Gamma(1+\delta) \Gamma(1-\delta) }{ \bar{a}} \bigg( \sum\limits_{q=1}^{n_{2}}  \bar{a}_{q} \bigg) \Bigg\}, \quad
%  \bar{T}  = \sum\limits_{q=1}^{n_{2}} \sum\limits_{p,r=1}^{n_{1}}  \bar{d}_{pqr}  \Bigg[ M_{p} \bigg(  M_{r} \eta_{2} \hat{\bar{h}}_{pqr}+  \hat{\bar{h}}_{pqr} \bar{g}_{r}(x_{r}^{*}) \bigg)+ M_{r} \hat{\bar{k}}_{pqr} \bar{g}_{p}(x_{p}^{*}) \Bigg].
\end{array}
\end{equation*}
In view of the previous  assumptions  $\Omega  \Big( \pi + \eta \kappa  \hat{h}^{*} \Big)   < \frac{1}{2},$
 % N _{2}\sum\limits_{q=1}^{n_{2}} \sum\limits_{p,r=1}^{n_{1}} \bar{d}_{pqr}  \Bigg[ M_{p}  \hat{\bar{h}}_{pqr} \bar{g}_{r}(x_{r}^{*}) +  M_{r} \hat{\bar{k}}_{pqr} \bar{g}_{p}(x_{p}^{*}) \Bigg]  < \frac{1}{4},\quad
%   \eta_{2}  N_{2} \sum\limits_{q=1}^{n_{2}} \sum\limits_{p,r=1}^{n_{1}}  \bar{d}_{pqr} M_{p} M_{r}  \hat{\bar{h}}_{pqr} < \frac{1}{4}.
the term $\Big[1- \Omega \Big( \pi+ \eta \kappa  \hat{h}^{*} \Big) \Big]$ is positive, and
\begin{equation}\label{sa1}
\begin{array}{c}
 V(t) \leq 2 V_{0} \Lambda_{1} E_{\delta} (-\xi t^{\delta}), \; t\in[0, \mu].
\end{array}
\end{equation}
%It is clear that
As $V_{0} \Lambda  < \frac{\eta}{4}$  implies $V(t_{*}) < \frac{\eta}{2}$, the process can  be continued.

%(or  $V_{0} \bar{\Lambda}  < \frac{\eta_{2}}{4}$  implies $V(t_{*}) < \frac{\eta_{2}}{2}$)
%we can continue the process.\\
For $t_{*} \in ( \mu, 2 \mu]$ and $ \mu \leq t \leq t_{*}, \; 0 \leq t- \mu \leq  \mu $, notice  that (\ref{mc1}) gives
\begin{equation*}
\begin{array}{c}
  \frac{E_{\delta}(- \xi  (t- \mu)^{\delta})}{E_{\delta}(- \xi t^{\delta})} \leq
   \frac{1}{E_{\delta}(- \xi t^{\delta})} \leq \frac{1}{E_{\delta}(- \xi  (2 \mu)^{\delta})} \leq 1+ \xi \Gamma(1-\delta) (2\mu)^{\delta} = : B
\end{array}
\end{equation*}
and
\begin{equation*}
\begin{array}{c}
  V(t-\mu)
   \leq  2 V_{0} \Lambda_{1} \frac{E_{\delta}(- \xi (t- \mu)^{\delta})}{E_{\delta}(- \xi t^{\delta})} E_{\delta}(- \xi t^{\delta}) \leq 2 V_{0} \Lambda_{1} B E_{\delta}(- \xi t^{\delta}).
\end{array}
\end{equation*}
%or for $t_{*} \in ( \theta, 2 \theta]$, and $ \theta \leq t \leq t_{*}, \; 0 \leq t- \theta \leq  \theta$. We note  that (\ref{mc1}) implies
%\begin{equation*}
%\begin{array}{c}
%  \frac{E_{\delta}(- \bar{a}  (t- \theta)^{\delta})}{E_{\delta}(- \bar{a} t^{\delta})} \leq
%   \frac{1}{E_{\delta}(- \bar{a}  t^{\delta})} \leq \frac{1}{E_{\delta}(- \bar{a}  (2 \theta)^{\delta})} \leq 1+ \bar{a} \Gamma(1-\delta) (2 \theta)^{\delta} =: \bar{B}
%\end{array}
%\end{equation*}
%and
%\begin{equation*}
%\begin{array}{c}
%  V(t-\theta)
%   \leq  2 V_{0} \bar{\Lambda}_{1} \frac{E_{\delta}(- \bar{a} (t-\theta)^{\delta})}{E_{\delta}(- \bar{a} t^{\delta})} E_{\delta}(- \bar{a} t^{\delta}) \leq 2 V_{0} \bar{\Lambda}_{1} \bar{B} E_{\delta}(-\bar{a} t^{\delta}).
%\end{array}
%\end{equation*}
Returning  to (\ref{fnl5}), we infer that
\begin{equation*}
\begin{array}{c}
  V(t) \leq (1+c^{*}) V_{0} E_{\delta}(- \xi t^{\delta}) + 2 c^{*} V_{0} \Lambda_{1} B E_{\delta}(- \xi  t^{\delta})
   + 2 c^{*} V_{0}  \Lambda_{1} B A \int_{0}^{t} (t-\omega)^{\delta-1}
  E_{\delta, \delta}(- \xi (t-\omega)^{\delta})\\ \times  E_{\delta}(- \xi \omega^{\delta}) d\omega
+   \pi  \int_{0}^{t} (t-\omega)^{\delta-1} E_{\delta, \delta}(- \xi (t-\omega)^{\delta}) \int_{-\infty}^{\omega}  K^{*}(\omega-\lambda) V(\lambda)  d \lambda d\omega  \\
  +  \hat{h}^{*} \eta \kappa \int_{0}^{t} (t-\omega)^{\delta-1} E_{\delta, \delta} (- \xi (t-\omega)^{\delta} )
   \int_{-\infty}^{\omega} k^{*}(\omega-\lambda) V(\lambda)   d \lambda,
   \end{array}
\end{equation*}
%or by using (\ref{fnl6}), we have
%\begin{equation*}
%\begin{array}{c}
%  V(t) \leq(1+\bar{c}) V_{0} E_{\delta}(- \bar{a} t^{\delta}) + 2 \bar{c} V_{0} \bar{\Lambda}_{1} \bar{B} E_{\delta}(- \bar{a}  t^{\delta})
%  + 2 \bar{c} V_{0}  \bar{\Lambda}_{1} \bar{B} \Big(\sum\limits_{q=1}^{n_{2}} \bar{a}_{q}\Big) \int_{0}^{t} (t-\omega)^{\delta-1}
%  E_{\delta, \delta}(- \bar{a}  (t-\omega)^{\delta})\\ \times E_{\delta}(- \bar{a} \omega^{\delta}) d\omega
%   +   \sum\limits_{q=1}^{n_{2}} \sum\limits_{p,r=1}^{n_{1}}   \bar{d}_{pqr}  \Big[ M_{p} \hat{\bar{h}}_{pqr} \bar{g}_{r}(x_{r}^{*})+ M_{r} \hat{\bar{h}}_{pqr} \bar{g}_{p}(x_{p}^{*}) \Big]  \int_{0}^{t} (t-\omega)^{\delta-1} E_{\delta, \delta}(- \bar{a} (t-\omega)^{\delta})  \\
%   \times  \int_{-\infty}^{s}
%   \max \Big \{ \bar{k}_{pqr}(\omega-\lambda ), \bar{h}_{pqr}  (\omega-\lambda) \Big \} V(\lambda)  d \lambda d\omega   \\
%  +  \sum\limits_{q=1}^{n_{2}}  \sum\limits_{p,r=1}^{n_{1}}   \bar{d}_{pqr} M_{p} M_{r} \hat{\bar{h}}_{pqr} \eta_{2}  \int_{0}^{t} (t-\omega)^{\delta} E_{\delta, \delta} (- \bar{a} (t-\omega)^{\delta} )
%   \int_{-\infty}^{s} \bar{k}_{pqr}(\omega-\lambda) V(\lambda)   d \lambda
%\end{array}
%\end{equation*}
and therefore
\begin{equation}\label{fnll13}
\begin{array}{c}
  V(t) \leq   V_{0} \Big[ (1+  c^{*} ) + 2 c^{*} \Lambda_{1}  B    \Big] E_{\delta}(- \xi t^{\delta})+ 2 V_{0} \Lambda_{1} B A c^{*}  \frac{\Gamma(1+\delta) \Gamma(1-\delta)}{\xi }  E_{\delta}(- \xi t^{\delta})     \\
  +  \Big( \pi + \eta \kappa  \hat{h}^{*} \Big)   \int_{0}^{t} (t-\omega)^{\delta-1} E_{\delta, \delta}(- \xi (t-\omega)^{\delta})
     \int_{-\infty}^{\omega}
   K^{*}(\omega-\lambda) E_{\delta}(- \xi \lambda^{\delta}) \frac{V(\lambda)}{E_{\delta}(- \xi \lambda^{\delta})}  d \lambda d\omega.
   \end{array}
   \end{equation}
%or
%\begin{eqnarray}\label{fnll14}
%\begin{array}{c}
%  V(t) \leq   V_{0} \Big[ (1+  \bar{c} ) + 2 \bar{c} \bar{\Lambda}_{1}  \bar{B}    \Big] E_{\delta}(- \bar{a} t^{\delta})+ 2 V_{0} \bar{\Lambda}_{1} \bar{B} \bar{c}  \frac{\Gamma(1+\delta) \Gamma(1-\delta)}{\bar{a} } \Big(\sum\limits_{q=1}^{n_{2}}  \bar{a}_{q}\Big) E_{\delta}(- \bar{a} t^{\delta})   \\
%    +  \sum\limits_{q=1}^{n_{2}} \sum\limits_{p,r=1}^{n_{1}}   \bar{d}_{pqr}  \Big[ M_{p} \hat{\bar{h}}_{pqr} \bar{g}_{r}(x_{r}^{*})+ M_{r} \hat{\bar{h}}_{pqr} \bar{g}_{p}(x_{p}^{*}) + M_{p} M_{r} \hat{\bar{h}}_{pqr} \eta_{2} \Big]  \int_{0}^{t} (t-\omega)^{\delta-1} E_{\delta, \delta}(- \bar{a} (t-\omega)^{\delta}) \\
%   \times  \int_{-\infty}^{s}
%   \max \Big\{ \bar{k}_{pqr}(\omega-\lambda ), \bar{h}_{pqr}  (\omega-\lambda) \Big\} E_{\delta}(- \bar{a}  \lambda^{\delta})   \frac{V(\lambda)}{E_{\delta}(- \bar{a} \lambda^{\delta}) }  d \lambda d\omega.
%\end{array}
%\end{eqnarray}
The estimation (\ref{fnll13}) implies
\begin{equation}\label{fnll15}
\begin{array}{c}
  \frac{V(t)}{E_{\delta}(- \xi  t^{\delta})} \leq   V_{0} \Big[ (1+ c^{*} ) + 2 c^{*}  \Lambda_{1} B    \Big] +  2 V_{0} \Lambda_{1} B A  c^{*} \frac{\Gamma(1+\delta) \Gamma(1-\delta)}{\xi}
   +  \Omega \Big( \pi+ \eta \kappa  \hat{h}^{*} \Big)  \sup\limits_{- \infty < \lambda \leq t} \frac{V(\lambda)}{E_{\delta}(- \xi  \lambda^{\delta}) }.
\end{array}
\end{equation}
%or, the estimation (\ref{fnll14}) gives
%\begin{equation}\label{fnlll15}
%\begin{array}{c}
%  \frac{V(t)}{E_{\delta}(- \bar{a}  t^{\delta})} \leq  V_{0} \Big[ (1+ \bar{c} ) + 2 \bar{c}  \bar{\Lambda}_{1} \bar{B}    \Big] +  2 V_{0} \bar{\Lambda}_{1} \bar{B}  \bar{c} \frac{\Gamma(1+\delta) \Gamma(1-\delta)}{\bar{a} }  \Big(\sum_{q=1}^{n_{2}}  \bar{a}_{q}\Big)   +  N_{2} \bar{T}  \sup\limits_{- \infty < \lambda \leq t} \frac{V(\lambda)}{E_{\delta}(- \bar{a}  \lambda^{\delta}) }.
%\end{array}
%\end{equation}
Hence
\begin{equation*}
\begin{array}{c}
  \Big[1- \Omega \Big( \pi + \eta \kappa  \hat{h}^{*} \Big) \Big] \frac{V(t)}{E_{\delta}(- \xi t^{\delta})} \leq   V_{0} \Big[ (1+ c^{*} ) + 2 c^{*} \Lambda_{1} B    \Big] + 2 V_{0} \Lambda_{1} B A c^{*} \frac{\Gamma(1+\delta) \Gamma(1-\delta)}{\xi}.
\end{array}
\end{equation*}
%likewise,  if $V(t)=v(t)$, we obtain for $t\in (\theta, 2\theta]$
%\begin{equation*}
%\begin{array}{c}
%  (1- N_{2}\bar{T}) \frac{V(t)}{E_{\delta}(- \bar{a} t^{\delta})} \leq   V_{0} \Big[ (1+ \bar{c} ) + 2 \bar{c} \bar{\Lambda}_{1} \bar{B}    \Big] + 2 V_{0} \bar{\Lambda}_{1} \bar{B}  \bar{c} \frac{\Gamma(1+\delta) \Gamma(1-\delta)}{\bar{a} } \Big(\sum_{q=1}^{n_{2}} \bar{a}_{q}\Big), \\
%   \bar{\Lambda}_{1} := \Bigg\{    1+ 2 \bar{c}
%  + \bar{c} \frac{\Gamma(1+\delta) \Gamma(1-\delta) }{ \bar{a}} \bigg( \sum\limits_{q=1}^{n_{2}}  \bar{a}_{q} \bigg) \Bigg\}, \;
%  \bar{T}  = \sum\limits_{q=1}^{n_{2}} \sum\limits_{p,r=1}^{n_{1}}  \bar{d}_{pqr}  \Bigg[ M_{p} \bigg(  M_{r} \eta_{2} \hat{\bar{h}}_{pqr}+  \hat{\bar{h}}_{pqr} \bar{g}_{r}(x_{r}^{*}) \bigg)+ M_{r} \hat{\bar{k}}_{pqr} \bar{g}_{p}(x_{p}^{*}) \Bigg].
%\end{array}
%\end{equation*}
As a consequence
\begin{equation}\label{fnl16}
\begin{array}{c}
  V(t) \leq 2 V_{0} \Lambda_{2} E_{\delta}(- \xi t^{\delta}),
  \end{array}
\end{equation}
%or
%\begin{equation}\label{fnl17}
%\begin{array}{c}
%  V(t) \leq 2 V_{0} \bar{\Lambda}_{2} E_{\delta}(- \bar{a}  t^{\delta}),
%  \end{array}
%\end{equation}
where
\begin{equation*}
\begin{array}{c}
  \Lambda_{2} := (1+ c^{*} ) +2 c^{*}  \Lambda_{1} B + 2 c^{*}  \Lambda_{1} B A \frac{\Gamma(1+\delta) \Gamma(1-\delta)}{ \xi }.
   % \bar{\Lambda}_{2} :=(1+ \bar{c} ) +2 \bar{c}  \bar{\Lambda}_{1} \bar{B} + 2 \bar{c}  \bar{\Lambda}_{1} \bar{B} \frac{\Gamma(1+\delta) \Gamma(1-\delta)}{ \bar{a} }   \Big( \sum\limits_{q=1}^{n_{2}} \bar{a}_{q}  \Big).
\end{array}
\end{equation*}
In case $t_{*} \in (2 \mu, 3 \mu]$, and $2 \mu< t \leq  t_{*}, \;  \mu \leq t- \mu \leq 2 \mu $, from (\ref{fnl5}), we have
\begin{equation}\label{fnll19}
\begin{array}{c}
  V(t) \leq   (1+c^{*}) V_{0} E_{\delta}(- \xi  t^{\delta})  + 2 c^{*} V_{0} \Lambda_{2}  E_{\delta}(- \xi t^{\delta})
  + c^{*} A \int_{0}^{t} (t-\omega)^{\delta-1}
  E_{\delta, \delta}(- \xi  (t-\omega)^{\delta})\\ \times  V(\omega-\mu) d\omega
  +  \sup\limits_{-\infty < \lambda \leq t} \frac{V(\lambda)}{ E_{\delta} (- \xi \lambda^{\delta})}
  \Big( \pi + \eta \kappa  \hat{h}^{*} \Big) \int_{0}^{t} (t-\omega)^{\delta-1} E_{\delta, \delta}(- \xi  (t-\omega)^{\delta})   \\ \times  \int_{-\infty}^{\omega}
   K^{*}(\omega-\lambda) E_{\delta}(- \xi \lambda^{\delta})  d \lambda d\omega.
   \end{array}
   \end{equation}
 %  or, for $t_{*} \in (2 \theta, 3 \theta]$, and $2  \theta < t \leq  t_{*}, \;  \theta \leq t- \theta \leq 2 \theta $, from (\ref{fnl6}), we have
%\begin{equation}\label{fnll20}
%\begin{array}{c}
%  V(t) \leq   (1+\bar{c}) V_{0} E_{\delta}(- \bar{a}  t^{\delta})  + 2 \bar{c} V_{0} \bar{\Lambda}_{2}  E_{\delta}(- \bar{a}  (t-\omega)^{\delta})
%   +  \bar{c}  \Big(\sum\limits_{q=1}^{n_{2}}  \bar{a}_{q}\Big) \int_{0}^{t} (t-\omega)^{\delta-1}\\ \times
%  E_{\delta, \delta}(- \bar{a}  (t-\omega)^{\delta}) V(s-\theta) d\omega
%    +  \sup\limits_{-\infty < \lambda \leq t} \frac{u(\lambda)}{ E_{\delta} (- \bar{a} \lambda^{\delta})} \sum\limits_{q=1}^{n_{2}} \sum\limits_{p,r=1}^{n_{1}}   \bar{d}_{pqr}  \Big[ M_{p} \hat{\bar{h}}_{pqr} \bar{g}_{r}(x_{r}^{*})+ M_{r} \hat{\bar{h}}_{pqr} \bar{g}_{p}(x_{p}^{*}) \\+ M_{p} M_{r} \hat{\bar{h}}_{pqr} \eta_{2} \Big]
%     \int_{0}^{t} (t-\omega)^{\delta-1} E_{\delta, \delta}(- \bar{a} (t-\omega)^{\delta})  \int_{-\infty}^{s}
%   \max \Big  \{ \bar{k}_{pqr}(\omega-\lambda), \bar{h}_{pqr}  (\omega-\lambda) \Big \} E_{\delta}(- \bar{a} \lambda^{\delta})    d \lambda d\omega.
%\end{array}
%\end{equation}
In accordance with  the estimations (\ref{sa1}) and (\ref{fnl16}), we conclude that
\begin{equation*}
\begin{array}{c}
  \int_{0}^{t} (t-\omega)^{\delta-1} E_{\delta, \delta}(- \xi  (t-\omega)^{\delta}) V(\omega-\mu) d\omega
  \leq V_{0} \int_{0}^{\mu} (t-\omega)^{\delta-1}
    E_{\delta, \delta}(- \xi (t-\omega)^{\delta}) E_{\delta}(- \xi \omega^{\delta})  d\omega   \\
   + 2 V_{0} \Lambda_{1} \int_{\mu}^{2 \mu} (t-\omega)^{\delta-1} E_{\delta, \delta}(- \xi  (t-\omega)^{\delta}) E_{\delta}(- \xi (\omega- \mu)^{\delta}) d\omega\\
   +  2 V_{0} \Lambda_{2}
  \int_{2 \mu}^{t} (t-\omega)^{\delta-1} E_{\delta, \delta}(- \xi  (t-\omega)^{\delta}) E_{\delta}(- \xi (\omega-\mu)^{\delta})d\omega.
\end{array}
\end{equation*}
Therefore
\begin{equation*}
\begin{array}{c}
    \int_{0}^{t} (t-\omega)^{\delta-1} E_{\delta, \delta}(- \xi (t-\omega)^{\delta}) V(\omega-\mu) d\omega
   \leq V_{0} \int_{0}^{\mu} (t-\omega)^{\delta-1}
    E_{\delta, \delta}(- \xi (t-\omega)^{\delta}) E_{\delta}(- \xi \omega^{\delta})  d\omega   \\
    + 2 V_{0} B \Lambda_{1} \int_{\mu}^{2 \mu} (t-\omega)^{\delta-1} E_{\delta, \delta}(- \xi  (t-\omega)^{\delta}) E_{\delta}(- \xi \omega^{\delta}) d\omega\\
  +  2 V_{0} F \Lambda_{2}
  \int_{2 \mu}^{t} (t-\omega)^{\delta-1} E_{\delta, \delta}(- \xi (t-\omega)^{\delta}) E_{\delta}(- \xi  \omega^{\delta}) d\omega,
\end{array}
\end{equation*}
%or
%\begin{equation*}
%\begin{array}{c}
%    \int_{0}^{t} (t-\omega)^{\delta-1} E_{\delta, \delta}(- \bar{a}  (t-\omega)^{\delta}) V(s- \theta) d\omega
%   \leq V_{0} \int_{0}^{\theta} (t-\omega)^{\delta-1}
%    E_{\delta, \delta}(- \bar{a}  (t-\omega)^{\delta}) E_{\delta}(- \bar{a}  \omega^{\delta})  d\omega   \\
%  + 2 V_{0} \bar{B} \bar{\Lambda}_{1} \int_{\theta}^{2 \theta} (t-\omega)^{\delta-1} E_{\delta, \delta}(- \bar{a}  (t-\omega)^{\delta}) E_{\delta}(- \bar{a} \omega^{\delta}) d\omega\\
%  +  2 V_{0} \bar{F} \bar{\Lambda}_{2}
%  \int_{2 \theta}^{t} (t-\omega)^{\delta-1} E_{\delta, \delta}(- \bar{a} (t-\omega)^{\delta}) E_{\delta}(-\bar{a}  \omega^{\delta}) d\omega.
%\end{array}
%\end{equation*}
and
\begin{equation*}
\begin{array}{c}
   \int_{0}^{t} (t-\omega)^{\delta-1} E_{\delta, \delta}(- \xi  (t-\omega)^{\delta}) V(\omega-\mu) d\omega
   \leq V_{0} \int_{0}^{\mu} (t-\omega)^{\delta-1}
    E_{\delta, \delta}(- \xi (t-\omega)^{\delta}) E_{\delta}(- \xi  \omega^{\delta})  d\omega   \\
   +2 V_{0}  \Lambda_{2} \max\{B, F \}   \int_{\mu}^{t} (t-\omega)^{\delta-1} E_{\delta, \delta}(- \xi  (t-\omega)^{\delta}) E_{\delta}(- \xi \omega^{\delta}) d\omega.
\end{array}
\end{equation*}
%or
%\begin{equation*}
%\begin{array}{c}
%   \int_{0}^{t} (t-\omega)^{\delta-1} E_{\delta, \delta}(- \bar{a}  (t-\omega)^{\delta}) V(s-\theta) d\omega
%    \leq V_{0} \int_{0}^{\theta} (t-\omega)^{\delta-1}
%    E_{\delta, \delta}(- \bar{a} (t-\omega)^{\delta}) E_{\delta}(- \bar{a}  \omega^{\delta})  d\omega   \\
%   +2 V_{0}  \bar{\Lambda}_{2} \max\{\bar{B}, \bar{F} \}   \int_{\theta}^{t} (t-\omega)^{\delta-1} E_{\delta, \delta}(- \bar{a}  (t-\omega)^{\delta}) E_{\delta}(- \bar{a} \omega^{\delta}) d\omega.
%\end{array}
%\end{equation*}
Moreover
\begin{equation}\label{fnl21}
\begin{array}{c}
  \int_{0}^{t} (t-\omega)^{\delta-1} E_{\delta, \delta}(-  \xi (t-\omega)^{\delta}) V(\omega-\mu) d\omega\\
  \leq
  2 V_{0} \Lambda_{2} \max\{B, F  \} \int_{0}^{t} (t-\omega)^{\delta-1}
    E_{\delta, \delta}(- \xi  (t-\omega)^{\delta}) E_{\delta}(- \xi \omega^{\delta})  d\omega   \\
   \leq  2 V_{0} \Lambda_{2} \max\{B, F  \} \frac{\Gamma(1+\delta) \Gamma(1-\delta)}{\xi} E_{\delta}(- \xi  t^{\delta})
  \leq 2 V_{0} \Lambda_{2} B^{*} \frac{\Gamma(1+\delta) \Gamma(1-\delta)}{\xi} E_{\delta}(- \xi t^{\delta}),
\end{array}
\end{equation}
%or
%\begin{equation}\label{fnl22}
%\begin{array}{c}
%   \int_{0}^{t} (t-\omega)^{\delta-1} E_{\delta, \delta}(- \bar{a}  (t-\omega)^{\delta}) V(s- \theta) d\omega \leq
% 2 V_{0} \bar{\Lambda}_{2} \bar{B}^{*} \frac{\Gamma(1+\delta) \Gamma(1-\delta)}{\bar{a }} E_{\delta}(- \bar{a} t^{\delta}),
%\end{array}
%\end{equation}
with  $ F : = \frac{B}{ \xi \mu^{\delta}}$ and $ \max\{ B, F\}= B \max\{1,  \frac{1}{ \xi  \mu^{\delta}} \} = :B^{*}.$\\
%$ \quad \bar{F}= \frac{\bar{B}}{\bar{a} \theta^{\delta}} $
 %and
%\begin{equation*}
% \begin{array}{c}
% \max\{ B, F\}= B \max\{1,  \frac{1}{ a  \mu^{\delta}} \} : = B^{*}, \; \max\{ \bar{B}, \bar{F}\}= \bar{B} \max\{1,  \frac{1}{ \bar{a}  \theta^{\delta}} \} : = \bar{B}^{*}.
% \end{array}
% \end{equation*}
In view of the relations (\ref{fnll19}) and (\ref{fnl21}), we find
\begin{equation}\label{fnl23}
\begin{array}{c}
  \frac{V(t)}{E_{\delta}(- \xi t^{\delta})} \leq   V_{0} \Big[ (1+ c^{*} ) + 2 c^{*} F \Lambda_{2}    \Big] + 2 V_{0} \Lambda_{2} B^{*} A c^{*}\frac{\Gamma(1+\delta) \Gamma(1-\delta)}{\xi }   + \Omega \Big( \pi + \eta \kappa  \hat{h}^{*} \Big)  \sup\limits_{- \infty < \lambda \leq t} \frac{V(\lambda)}{E_{\delta}(- \xi  \lambda^{\delta}) }.
\end{array}
\end{equation}
%or, by the relations (\ref{fnll20}) and (\ref{fnl22}), we get
%\begin{equation}\label{fnl24}
%\begin{array}{c}
%  \frac{V(t)}{E_{\delta}(- \bar{a}  t^{\delta})} \leq  V_{0} \Big[ (1+ \bar{c} ) + 2 \bar{c} \bar{F} \bar{\Lambda}_{2}    \Big] + 2 V_{0} \bar{\Lambda}_{2} \bar{B}^{*} \bar{c} \frac{\Gamma(1+\delta) \Gamma(1-\delta)}{\bar{a} }\Big(\sum\limits_{q=1}^{n_{2}}  \bar{a}_{q}\Big)  + N_{2} \bar{T} \sup\limits_{- \infty < \lambda \leq t} \frac{V(\lambda)}{E_{\delta}(- \bar{a}  \lambda^{\delta}) }.
%\end{array}
%\end{equation}
Then,
\begin{equation*}
\begin{array}{c}
  \Big[1-\Omega \Big(\pi+ \eta \kappa  \hat{h}^{*} \Big) \Big] \sup\limits_{- \infty < \lambda \leq t} \frac{V(\lambda)}{E_{\delta}(-  \xi \lambda^{\delta}) }   \leq V_{0} \Big[(1+ c^{*} ) + 2 c^{*}  F \Lambda_{2}   \Big]
  + 2 V_{0} \Lambda_{2} B^{*} A c^{*}  \frac{\Gamma(1+\delta) \Gamma(1-\delta)}{\xi }.
\end{array}
\end{equation*}
%likewise,  if $V(t)= v(t)$, we find for $t\in (2\theta, 3 \theta]$
%\begin{equation*}
%\begin{array}{c}
%  (1-N_{2} \bar{T}) \sup\limits_{- \infty < \lambda \leq t} \frac{V(\lambda)}{E_{\delta}(-  \bar{a} \lambda^{\delta}) }   \leq V_{0} \Big[(1+ \bar{c} ) + 2 \bar{c}  \bar{F} \bar{\Lambda}_{2}   \Big]
%  + 2 V_{0} \bar{\Lambda}_{2} \bar{B}^{*} \bar{c}  \frac{\Gamma(1+\delta) \Gamma(1-\delta)}{\bar{a} } \Big(\sum\limits_{q=1}^{n_{2}}  \bar{a}_{q}\Big).
%\end{array}
%\end{equation*}
Consequently, we obtain
\begin{equation}\label{fnl25}
\begin{array}{c}
  V(t) \leq 2 \Lambda_{3} V_{0} E_{\delta} (- \xi t^{\delta}),
  \end{array}
\end{equation}
%or
%\begin{equation}\label{fnl26}
%\begin{array}{c}
%  V(t) \leq 2 \bar{\Lambda}_{3} V_{0} E_{\delta} (- \bar{a} t^{\delta}),
%  \end{array}
%\end{equation}
where $ \Lambda_{3} := (1+ c^{*} ) + 2 c^{*} B^{*} \Lambda_{2} + 2 \Lambda_{2} B^{*} A c^{*}  \frac{\Gamma(1+\delta) \Gamma(1-\delta)}{\xi }.$\\
%\begin{equation*}
%\begin{array}{c}
%  \Lambda_{3} := (1+ c ) + 2 c B^{*} \Lambda_{2} + 2 \Lambda_{2} B^{*} c  \frac{\Gamma(1+\delta) \Gamma(1-\delta)}{a }  \Big(\sum\limits_{p=1}^{n_{1}} a_{p}\Big), \\
%   \bar{\Lambda}_{3} := (1+ \bar{c} ) + 2 \bar{c} \bar{B}^{*} \Lambda_{2} + 2 \Lambda_{2} \bar{B}^{*} \bar{c} \frac{\Gamma(1+\delta) \Gamma(1-\delta)}{ \bar{a} } \Big(\sum\limits_{q=1}^{n_{2}}  \bar{a}_{q}\Big).
%\end{array}
%\end{equation*}
%Notice that
Recalling that  $U := A \frac{\Gamma(1+\delta) \Gamma(1-\delta)}{\xi},
$
%\begin{equation*}
%\begin{array}{c}
%  U :=  \frac{\Gamma(1+\delta) \Gamma(1-\delta)}{a} \Big(\sum_{p=1}^{n_{1}}  a_{p}\Big),\;
%    \bar{U} :=  \frac{\Gamma(1+\delta) \Gamma(1-\delta)}{\bar{a}} \Big(\sum_{q=1}^{n_{2}}  \bar{a}_{q}\Big),
%\end{array}
%\end{equation*}
\begin{equation*}
\begin{array}{c}
% \nonumber to remove numbering (before each equation)
  \Lambda_{1} = (1+ c^{*} ) + c^{*}(1+U),  \\
  \Lambda_{2} = (1+ c^{*} ) +2 c^{*}  B \Lambda_{1} (1+ U)
  %= (1+c ) +2 c B (1+U) \Big[(1+c )  + c(1+U)  \Big] \\
   = (1+ c^{*} ) + 2(1+ c^{*}) B \Big[ c^{*} (1+U)   \Big] + 2 B\Big[ c^{*} (1+U)    \Big]^{2}
\end{array}
\end{equation*}
and
\begin{equation*}
\begin{array}{c}
% \nonumber to remove numbering (before each equation)
  \Lambda_{3} =  (1+c^{*})+ 2 c^{*} B^{*} \Lambda_{2} (1+ U)  \\
% = (1+c) +2 c  B^{*} (1+U) \Big\{  (1+ c ) + 2(1+ c ) B \Big[ c (1+U)   \Big] + 2 B\Big[ c (1+U)    \Big]^{2}   \Big\} \\
  % \leq  (1+ c ) +2 (1+ c ) \Big[ B^{*} c  (1+U)   \Big] + 4 (1+ c ) \Big[ B^{*} c  (1+U)   \Big]^{2} + 4 \Big[ B^{*} c (1+U)   \Big]^{3} \\
   \leq  (1+ c^{*} ) \Bigg\{  1+ \Big[ 2B^{*} c^{*}  (1+U)   \Big]  + \Big[ 2 B^{*}  c^{*} (1+U)   \Big]^{2} + \Big[ 2 B^{*} c^{*}  (1+U)   \Big]^{3}         \Bigg\}.
\end{array}
\end{equation*}
%likewise
%\begin{equation*}
%\begin{array}{c}
%% \nonumber to remove numbering (before each equation)
%  \bar{\Lambda}_{1}
%  %= (1+\bar{c} ) + \bar{c} + \bar{c} \bar{U}
%  = (1+ \bar{c} ) + \bar{c}(1+\bar{U}),  \\
%  \bar{\Lambda}_{2}
%  % = (1+ \bar{c} ) +2\bar{c} \bar{B} \bar{\Lambda}_{1} (1+ \bar{U})
%  %= (1+\bar{c} ) + 2 \bar{c} \bar{B} (1+\bar{U}) \Big[(1+\bar{c} )  + \bar{c}(1+\bar{U})  \Big] \\
%   = (1+ \bar{c} ) + 2(1+ \bar{c} ) \bar{B} \Big[ \bar{c} (1+\bar{U})   \Big] + 2 \bar{B}\Big[ \bar{c} (1+\bar{U})    \Big]^{2}
%\end{array}
%\end{equation*}
%and
%\begin{equation*}
%\begin{array}{c}
%  \bar{\Lambda}_{3}
%  %=  (1+\bar{c})+ 2 \bar{c} \bar{B}^{*} \bar{\Lambda}_{2} (1+ \bar{U})  \\
% %=  (1+\bar{c}) +2 \bar{c}  \bar{B}^{*} (1+\bar{U}) \Big\{  (1+ \bar{c} ) + 2(1+ \bar{c} ) \bar{B} \Big[ \bar{c} (1+\bar{U})   \Big] + 2 \bar{B}\Big[ \bar{c} (1+\bar{U})    \Big]^{2}   \Big\} \\
%%   \leq  (1+ \bar{c} ) +2 (1+ \bar{c} ) \Big[ \bar{B}^{*} \bar{c}  (1+\bar{U})   \Big] + 4 (1+ \bar{c} ) \Big[ \bar{B}^{*} \bar{c}  (1+\bar{U})   \Big]^{2} + 4 \Big[ \bar{B}^{*} \bar{c} (1+\bar{U})   \Big]^{3} \\
%   \leq  (1+ \bar{c} ) \Bigg\{  1+ \Big[ 2 \bar{B}^{*} \bar{c}  (1+\bar{U})   \Big]  + \Big[ 2 \bar{B}^{*}  \bar{c} (1+\bar{U})   \Big]^{2} + \Big[ 2 \bar{B}^{*} \bar{c}  (1+\bar{U})   \Big]^{3}         \Bigg\}.
%\end{array}
%\end{equation*}
We claim that
%If $t_{*} \in ( (n-1) \mu, n \mu], \; (n-1) \mu< t \leq t_{*}, \; n \geq 1 $, then we have
\begin{equation}\label{fnr26}
  V(t) \leq  2 \Lambda_{k} V_{0} E_{\delta} (-  \xi  t^{ \delta}),\; (k-1) \mu< t \leq t_{*}, \; t_{*} \in ( (k-1) \mu, k \mu],  \; k \geq 1 ,
\end{equation}
%or, for $t_{*} \in ( (n-1) \theta, n \theta], \; (n-1) \theta< t \leq t_{*}, \; n \geq 1 $,  we get
%\begin{equation}\label{fnr27}
%  V(t) \leq  2 \bar{\Lambda}_{n} V_{0} E_{\delta} (- \bar{a}   t^{ \delta}),
%\end{equation}
with
\begin{equation*}
  \Lambda_{k}  \leq (1+c^{*}) \sum\limits_{l=0}^{k} \Big[ 2  B^{*} c^{*} (1+ U)  \Big]^{l}.
  %\\
    %\bar{\Lambda}_{n}  &\leq &(1+\bar{c}) \sum\limits_{k=0}^{n_{1}} \Big[ 2  \bar{B}^{*} \bar{c} (1+ \bar{U})  \Big]^{k}.
\end{equation*}
We proceed by induction to prove (\ref{fnr26}).
It is obvious  that (\ref{fnr26}) is valid for $k=1, 2, 3$.
Assume that the claim (\ref{fnr26})  is valid  for $k$. We want to prove it for $k+1$. Clearly, we obtain
\begin{equation*}
\begin{array}{c}
  \Lambda_{k+1} := (1+ c^{*} ) +2 B^{*} c^{*} \Lambda_{k} (1+U)
   \leq(1+ c^{*} ) + 2 B^{*} c^{*}  (1+ U) (1+ c^{*} ) \sum\limits_{l=0}^{k} \Big[  2 B^{*} c^{*}  (1+U) \Big]^{l} \\
  \leq  (1+ c^{*} )  \Bigg\{ 1 +  2 B^{*} c^{*} (1+ U ) \sum\limits_{l=0}^{k}  \Big[  2 B^{*} c^{*}  (1+U) \Big]^{l}    \Bigg\}
  \leq  (1+c^{*}  )  \Bigg\{ 1 +   \sum\limits_{l=0}^{k}  \Big[  2 B^{*} c^{*}   (1+U) \Big]^{l+1}    \Bigg\}\\
   \leq   (1+c^{*} )   \sum\limits_{l=0}^{k+1}  \Big[  2 B^{*}c^{*}   (1+U) \Big]^{l}.
\end{array}
\end{equation*}
%or
%\begin{equation*}
%\begin{array}{c}
%% \nonumber to remove numbering (before each equation)
%  \bar{\Lambda}_{n+1} := (1+ \bar{c} ) +2 \bar{B}^{*} \bar{c} \bar{\Lambda}_{n} (1+\bar{U})
%   \leq (1+ \bar{c} ) + 2 \bar{B}^{*} \bar{c}  (1+ \bar{U}) (1+ \bar{c} ) \sum\limits_{k=0}^{n_{1}} \Big[  2 \bar{B}^{*} \bar{c}  (1+\bar{U}) \Big]^{k} \\
%  \leq  (1+ \bar{c} )  \Bigg\{ 1 +  2 \bar{B}^{*} \bar{c} (1+ \bar{U} ) \sum\limits_{k=0}^{n_{1}}  \Big[  2 \bar{B}^{*} \bar{c}  (1+\bar{U}) \Big]^{k}    \Bigg\}
%  \leq  (1+\bar{c}  )  \Bigg\{ 1 +   \sum\limits_{k=0}^{n_{1}}  \Big[  2 \bar{B}^{*} \bar{c}   (1+\bar{U}) \Big]^{k+1}    \Bigg\}\\
%  \leq  (1+\bar{c} )   \sum\limits_{k=0}^{n+1}  \Big[  2 \bar{B}^{*}\bar{c}   (1+\bar{U}) \Big]^{k}.
%\end{array}
%\end{equation*}
Hence
$$  V(t)  \leq    2 \Lambda V_{0}     E_{\delta} (- \xi  t^{\delta}), \quad   t> 0, $$
%likewise, we deduce  for $t_{*}\in ( (n-1) \theta, n \theta]$ that
%$$  V(t)  \leq    2 \bar{\Lambda} V_{0}     E_{\delta} (- \bar{a}  t^{\delta}), \quad   t> 0, $$
with
\begin{equation*}\label{ser1}
\Lambda:= \sum\limits_{l=0}^{+\infty}  \Big[  2 B^{*} c^{*} (1+U) \Big]^{l}.
\end{equation*}
% \quad \bar{\Lambda}:= \sum\limits_{k=0}^{+\infty}  \Big[  2 \bar{B}^{*} \bar{c}  (1+\bar{U}) \Big]^{k},  $$
According to the conditions stated  in Theorem \ref{th2}, $\Lambda$  is convergent.
%$$2 B^{*} c^{*}  (1+U ) < 1.$$
%% \text{\; or } \; 2 \bar{B}^{*} \bar{c}  (1+\bar{U} ) < 1.$$
%Noted that in the case of $V(t)=v(t)$, we proceed with the same arguments on the sub-intervals $((k-1)\theta, k \theta]$ rather than $((k-1)\mu, %k\mu], \; 1 \leq k \leq n-1$.
%The proof is complete.
\end{proof}

\section{Synchronization}
From the above stability results, the synchronization of  coupled systems can be derived. We refer to  system (\ref{hfn1}) as the uncontrolled system, and  the controlled system is given by
\begin{equation}\label{sl1}
\begin{array}{c}
D_{C}^{\delta}\Big[ z_{p}(t)-cz_{p}(t-\mu )\Big] =  -a_{p}z_{p}(t)
+\sum\limits_{q,s=1}^{n_{2}} d_{qps} \Big( \int_{0}^{\infty } k_{qps}(\omega)g_{q}( w_{q}(t-\omega)) d\omega \\
 \times \int_{0}^{\infty
}h_{qps}(\omega) g_{s}( w_{s}(t-\omega)) d\omega \Big) + I_{p} + \chi_{p}(t),\; t>0, \\
D_{C}^{\delta}\Big[ w_{q}(t)- \bar{c} w_{q}(t-\mu )\Big] =   -\bar{a}_{q} w_{q}(t)
+\sum\limits_{p, r=1}^{n_{1}} \bar{d}_{pqr} \Big(\int\nolimits_{0}^{%
\infty } \bar{k}_{pqr}(\omega) \bar{g}_{p}( z_{p}(t-\omega)) d\omega  \\
 \times  \int\nolimits_{0}^{\infty
}\bar{h}_{pqr}(\omega)\bar{g}_{r}( z_{r}(t-\omega)) d\omega \Big)+ J_{q} + \bar{\chi}_{q}(t),\; t>0,
\end{array}
\end{equation}
 where $\chi_{p}(t)$ and $\bar{\chi}_{q}(t)$ are the feedback controls.\\
Besides, the error system is defined  by
\begin{equation}\label{er1}
\begin{array}{c}
D_{C}^{\delta}\Big[ e_{p}(t)-c e_{p}(t-\mu )\Big]= -a_{p} e_{p}(t)
+\sum\limits_{q,s=1}^{n_{2}} d_{qps}\int_{0}^{\infty } k_{qps}(\omega)g_{q}( w_{q}(t-\omega)) d\omega \\
 \times \int_{0}^{\infty}h_{qps}(\omega) g_{s}( w_{s}(t-\omega)) d\omega - \sum\limits_{q,s=1}^{n_{2}} d_{qps}\int_{0}^{\infty } k_{qps}(\omega)g_{q}( y_{q}(t-\omega)) d\omega \\
 \times \int_{0}^{\infty}h_{qps}(\omega) g_{s}( y_{s}(t-\omega)) d\omega   + \chi_{p}(t),\; t>0, \\
 D_{C}^{\delta}\Big[ \bar{e}_{q}(t)- \bar{c} \bar{e}_{q}(t-\mu )\Big] = -\bar{a}_{q} \bar{e}_{q}(t)
+\sum\limits_{p, r =1}^{n_{1}}  \bar{d}_{pqr}\int\nolimits_{0}^{%
\infty } \bar{k}_{pqr}(\omega) \bar{g}_{p}( z_{p}(t-\omega)) d\omega  \\
  \times \int\nolimits_{0}^{\infty
}\bar{h}_{pqr}(\omega)\bar{g}_{r}( z_{r}(t-\omega)) d\omega
- \sum\limits_{p,r=1}^{n_{1}} \bar{d}_{pqr}\int\nolimits_{0}^{%
\infty } \bar{k}_{pqr}(\omega) \bar{g}_{p}( x_{p}(t-\omega)) d\omega  \\
  \times \int\nolimits_{0}^{\infty
}\bar{h}_{pqr}(\omega)\bar{g}_{r}( x_{r}(t-\omega)) d\omega  + \bar{\chi}_{q}(t),\; t>0,
\end{array}
\end{equation}
where $e_{p}(t):= z_{p}(t)- x_{p}(t)$ and  $\bar{e}_{q}(t):= w_{q}(t)- y_{q}(t)$ are the synchronization errors.\\
Let the controls be
\begin{eqnarray*}
% \nonumber to remove numbering (before each equation)
  \chi_{p}(t) &:=&  -\beta (z_{p}(t)- x_{p}(t)), \quad \beta > 0, \\
  \bar{\chi}_{q}(t) &:=& -\bar{\beta} (w_{q}(t)- y_{q}(t)), \quad \bar{\beta} > 0.
\end{eqnarray*}
 Then, the synchronization of  systems (\ref{hfn1}) and (\ref{sl1}) boils down to the  stability shown in the preceding sections. Moreover, the convergence rate is enhanced  by adding these  negative feedbacks to  the dissipation coefficients $a_{p}$ and $\bar{a}_{q}$.
\section{Numerical illustration }
Two examples of higher-order fractional BAM NNs will be given to  validate the previous  theoretical findings.
\begin{exm}
We consider  the following  fractional higher-order   BAM NN system
\begin{equation}\label{examp1}
\begin{array}{c}
  D^{b}_{c}[x_{1}(t)-cx_{1}(t-\mu)] = -a_{1} x_{1}(t)  +  \sum\limits_{q,s =1}^{2} d_{q1s} \int_{0}^{t} k_{q1s}(\omega) g_{q}(y_{q}(t-\omega))d\omega\\
  \int_{0}^{t} h_{q1s}(\omega) g_{s}(y_{s}(t-\omega))d\omega + 1,\\
 D^{b}_{c}[x_{2}(t)-cx_{2}(t-\mu)] = -a_{2} x_{2}(t)  +  \sum\limits_{q,s=1}^{2} d_{q2s} \int_{0}^{t} k_{q2s}(\omega) g_{q}(y_{q}(t-\omega))d\omega
 \\
  \int_{0}^{t} h_{q2s}(\omega) g_{s}(y_{s}(t-\omega))d\omega   +0.75, \\
  D^{b}_{c}[y_{1}(t)-\bar{c}y_{1}(t-\mu)] = -\bar{a}_{1} y_{1}(t)  +  \sum\limits_{p, r=1}^{2} \bar{d}_{p1r} \int_{0}^{t} \bar{k}_{p1r}(\omega) \bar{g}_{p}(x_{p}(t-\omega))d\omega \\
  \int_{0}^{t} \bar{h}_{p1r}(\omega) \bar{g}_{r}(x_{r}(t-\omega))d\omega   + 0.5, \\
  D^{b}_{c}[y_{2}(t)-\bar{c}y_{2}(t-\mu)] =  -\bar{a}_{2} y_{2}(t)  +  \sum\limits_{p,r =1}^{2}\bar{d}_{p2r} \int_{0}^{t} \bar{k}_{p2r}(\omega) \bar{g}_{p}(x_{p}(t-\omega))d\omega \\
     \int_{0}^{t} \bar{h}_{p2r}(\omega) \bar{g}_{r}(x_{r}(t-\omega))d\omega  + 1, \\
    x_{1}(t) = -0.5, \quad x_{2}(t)=-1, \; t\in[-10, 0] \\
y_{1}(t) = -0.75, \quad y_{2}(t)=  -1.5, \; t\in [-10, 0],
\end{array}
\end{equation}
where the coefficients and  functions for $p, q=1, 2, \; t\in[0, 10]$ are taken as
\begin{equation*}
\begin{array}{c}
a_{1}= 5, \; a_{2}= 7, \;  \bar{a}_{1}= 6, \; \bar{a}_{2}= 8, \;  g_{q}(x) =tanh(x), \; \bar{g}_{p}(x) = tanh(x),\\
 k_{qps}(t)= h_{qps}(t)= e^{-5t}, \;  \bar{k}_{pqr}(t)= \bar{h}_{pqr}(t)= e^{-6t}, \; r, s =1, 2,\\
 c = \bar{c}= 0.0001, \; \mu=1, \; b=0.9,\\
d_{111} = 1.3, \; d_{112}= 0.5, \; d_{211}=1, \; d_{212}=  0.25, \\
d_{121} = 0.75, \; d_{122}= 1,\;  d_{221}=   0.5, \; d_{222}=   0.4, \\
\bar{d}_{111} = 0.6, \; \bar{d}_{112}= 1, \; \bar{d}_{211}=0.5, \; \bar{d}_{212}=  0.25, \\
\bar{d}_{121} = 1, \; \bar{d}_{122}= 1.4, \;  \bar{d}_{221}=   0.75, \; \bar{d}_{222}=   1.25.
\end{array}
\end{equation*}
The controlled  system is described by
\begin{equation}\label{examp2}
\begin{array}{c}
% \nonumber to remove numbering (before each equation)
  D^{b}_{c}[\bar{x}_{1}(t)-c\bar{x}_{1}(t-\mu)] = -a_{1} \bar{x}_{1}(t)  +  \sum\limits_{q,s=1}^{2} d_{q1s} \int_{0}^{t} k_{q1s}(\omega) g_{q}(\bar{y}_{q}(t-\omega))d\omega  \\
  \int_{0}^{t} h_{q1s}(\omega) g_{s}(\bar{y}_{s}(t-\omega))d\omega + 1  + \chi_{1}(\bar{x}_{1}(t)), \\
 D^{b}_{c}[\bar{x}_{2}(t)-c\bar{x}_{2}(t-\mu)] =  -a_{2} \bar{x}_{2}(t)  +  \sum\limits_{q,s=1}^{2} d_{q2s} \int_{0}^{t} k_{q2s}(\omega) g_{q}(\bar{y}_{q}(t-\omega))d\omega\\
   \int_{0}^{t} h_{q2s}(\omega) g_{s}(\bar{y}_{s}(t-\omega))d\omega   + 0.75 + \chi_{2}(\bar{x}_{2}(t)),  \\
  D^{b}_{c}[\bar{y}_{1}(t)-\bar{c}\bar{y}_{1}(t-\mu)] =  -\bar{a}_{1} \bar{y}_{1}(t)  +  \sum\limits_{p, r=1}^{2} \bar{d}_{p1r} \int_{0}^{t} \bar{k}_{p1r}(\omega) \bar{g}_{p}(\bar{x}_{p}(t-\omega))d\omega \\
   \int_{0}^{t} \bar{h}_{p1r}(\omega) \bar{g}_{r}(\bar{x}_{r}(t-\omega))d\omega   + 0.5 + \bar{\chi}_{1}(\bar{y}_{1}(t)), \\
  D^{b}_{c}[\bar{y}_{2}(t)-\bar{c}\bar{y}_{2}(t-\mu)] =  -\bar{a}_{2}\bar{y}_{2}(t)  +  \sum\limits_{p, r=1}^{2}  \bar{d}_{p2r} \int_{0}^{t} \bar{k}_{p2r}(\omega) \bar{g}_{p}(\bar{x}_{p}(t-\omega))d\omega\\
     \int_{0}^{t} \bar{h}_{p2r}(\omega) \bar{g}_{r}(\bar{x}_{r}(t-\omega))d\omega  + 1 +\bar{ \chi}_{2}(\bar{y}_{2}(t)),\\
     \bar{x}_{1}(t)= -1, \quad \bar{x}_{2}(t)= -1.75, \; t \in[-10, 0]\\
\bar{y}_{1}(t)= -1, \quad \bar{y}_{2}(t)=  -2, \; t \in[-10, 0],
\end{array}
\end{equation}
where
\begin{eqnarray*}
%a_{1}&=& 1.5, \quad a_{2}= 0.5, \quad \bar{a}_{1}= 1, \quad \bar{a}_{2}= 2, \\
 \chi_{p}(t)&=& -\beta(\bar{x}_{p}(t)-x_{p}(t)), \quad  \bar{\chi}_{q}(t)= -\bar{\beta} (\bar{y}_{q}(t)-y_{q}(t)), \quad
 \beta = \bar{\beta}= 2.
\end{eqnarray*}
\begin{equation}\label{examp3}
\begin{array}{c}
  D^{b}_{c}[e_{1}(t)-ce_{1}(t-\mu)] =  -a_{1} e_{1}(t)  +  \sum\limits_{q,s=1}^{2} d_{q1s} \int_{0}^{t} k_{q1s}(\omega) g_{q}(\bar{y}_{q}(t-\omega))d\omega\\
     \int_{0}^{t} h_{q1s}(\omega) g_{s}(\bar{y}_{s}(t-\omega))d\omega - \sum\limits_{q,s=1}^{2} d_{q1s} \int_{0}^{t} k_{q1s}(\omega) g_{q}(y_{q}(t-\omega))d\omega\\
      \int_{0}^{t} h_{q1s}(\omega) g_{s}(y_{s}(t-\omega))d\omega +\chi_{1}(t)   , \\
 D^{b}_{c}[e_{2}(t)-ce_{2}(t-\mu)] = -a_{2} e_{2}(t)  +  \sum\limits_{q,s=1}^{2}  d_{q2s} \int_{0}^{t} k_{q2s}(\omega) g_{q}(\bar{y}_{q}(t-\omega))d\omega\\
     \int_{0}^{t} h_{q2s}(\omega) g_{s}(\bar{y}_{s}(t-\omega))d\omega -\sum\limits_{q,s=1}^{2} d_{q2s} \int_{0}^{t} k_{q2s}(\omega) g_{q}(y_{q}(t-\omega))d\omega\\
    \int_{0}^{t} h_{q2s}(\omega) g_{s}(y_{s}(t-\omega))d\omega  +\chi_{2}(t),  \\
  D^{b}_{c}[\bar{e}_{1}(t)-\bar{c}\bar{e}_{1}(t-\mu)] = -\bar{a}_{1} \bar{e}_{1}(t)  +  \sum\limits_{p,r=1}^{2} \bar{d}_{p1r} \int_{0}^{t} \bar{k}_{p1r}(\omega) \bar{g}_{p}(\bar{x}_{p}(t-\omega))d\omega \\
     \int_{0}^{t} \bar{h}_{p1r}(\omega) \bar{g}_{r}(\bar{x}_{r}(t-\omega))d\omega -\sum\limits_{p, r=1}^{2} \bar{d}_{p1r} \int_{0}^{t} \bar{k}_{p1r}(\omega) \bar{g}_{p}(x_{p}(t-\omega))d\omega \\
     \int_{0}^{t} \bar{h}_{p1r}(\omega) \bar{g}_{r}(x_{r}(t-\omega))d\omega    + \bar{\chi}_{1}(t) , \\
  D^{b}_{c}[\bar{e}_{2}(t)-\bar{c}\bar{e}_{2}(t-\mu)] =  -\bar{a}_{2} \bar{e}_{2}(t)  +  \sum\limits_{p, r =1}^{2} \bar{d}_{p2r} \int_{0}^{t} \bar{k}_{p2r}(\omega) \bar{g}_{p}(\bar{x}_{p}(t-\omega))d\omega\\
    \int_{0}^{t} \bar{h}_{p2r}(\omega) \bar{g}_{r}(\bar{x}_{r}(t-\omega))d\omega -\sum\limits_{p, r =1}^{2} \bar{d}_{p2r} \int_{0}^{t} \bar{k}_{p2r}(\omega) \bar{g}_{p}(x_{p}(t-\omega))d\omega\\
   \int_{0}^{t} \bar{h}_{p2r}(\omega) \bar{g}_{r}(x_{r}(t-\omega))d\omega  + \bar{\chi}_{2}(t),
\end{array}
\end{equation}
the  system (\ref{examp2}) is the error system, where
$$e_{p}(t)=\bar{x}_{p}(t)-x_{p}(t), \quad \bar{e}_{q}(t)=\bar{y}_{q}(t)-y_{q}(t).$$
Furthermore, we have
\begin{equation*}
\begin{array}{c}
 \bigg[1+ \Gamma(1+b) \Gamma(1-b)   \bigg] F \frac{a^{*} c^{*}}{\xi} = 0.02<  1,\\
    %\bigg[1+ \Gamma(1+\delta) \Gamma(1-\delta)   \bigg] \bar{F} \bar{c} =0.36 &\leq& 1.
    \Omega < 1- \bigg[1+ \Gamma(1+b) \Gamma(1-b)   \bigg] F \frac{a^{*} c^{*}}{\xi} =0.98.
\end{array}
\end{equation*}
Then, Theorem \ref{th1} is  applied.
On the other hand, for step $h=0.02$, Figures \ref{fig1}, \ref{fig2} and \ref{fig3} illustrate trajectories of the states $x_{1}(t), x_{2}(t), y_{1}(t)$ and $y_{2}(t)$, whilst, Figures \ref{fig4} and \ref{fig5} depict  trajectories of the error states $e_{1}(t), e_{2}(t), \bar{e}_{1}(t)$ and $\bar{e}_{2}(t)$.  The convergence  of solutions of system (\ref{examp1}) to the equilibrium  in $b-$Mittag-Leffler manner is shown by Figures \ref{fig1}, \ref{fig2} and \ref{fig3}. Besides, Figures \ref{fig4} and \ref{fig5} describe the convergence  of solutions of  system  (\ref{examp3}) to the zero state.
\end{exm}

%\newpage
\begin{exm}
For the unbounded case, we assume
\begin{equation*}
 g_{q}(x)= h_{q}(x) =  \bar{g}_{p}(x) =\bar{h}_{p}(x) = argsh(x),\; p,q= 1, 2, \\
\end{equation*}%\eta=0.2
and  we have
\begin{equation*}
\begin{array}{c}
c^{*} < \frac{1}{2 B^{*} (1+U)}= 0.0002,\quad \Omega  < \frac{1}{4 \pi}=  0.4.
\end{array}
\end{equation*}
Therefore,  the conditions stated in Theorem \ref{th2} are met. Moreover, for step $h=0.02$, trajectories of the states $x_{1}(t), x_{2}(t), y_{1}(t)$ and $y_{2}(t)$ are illustrated by  Figures \ref{fig6}, \ref{fig7} and \ref{fig8}, and Figures \ref{fig9} and \ref{fig10} describe trajectories of $e_{1}(t),e_{2}(t), \bar{e}_{1}(t)$ and $\bar{e}_{2}(t)$ for various data and values of $b$, respectively. The convergence  of solutions of systems (\ref{examp1}) and (\ref{examp2}) in $b-$Mittag Leffler type is depicted  by Figures \ref{fig6}, \ref{fig7}, \ref{fig8}, \ref{fig9} and \ref{fig10}.

\end{exm}
\newpage
\section{Conclusion}
We have considered a neural network system with some challenging features. Indeed, the system is of fractional order and furthermore it is of higher-order  in addition to the presence of  neutral delays. Both features are problematic. The difficulties caused by the fractional derivatives are overcome by the use of a fractional version of Halanay inequality. To get around the second obstacle, we have performed some appropriate manipulations and evaluations. The stability is shown to be of Mittag-Leffler type as is expected for fractional differential equations. The synchronization issue is obtained easily from our stability results through linear feedback controls.
\section*{Acknowledgments} The second author would like to thank King Fahd University of Petroleum and Minerals of its continuous support through project SB 181008.

\newpage
\begin{figure}[tbp]\centering
              \includegraphics[width=.75\textwidth]{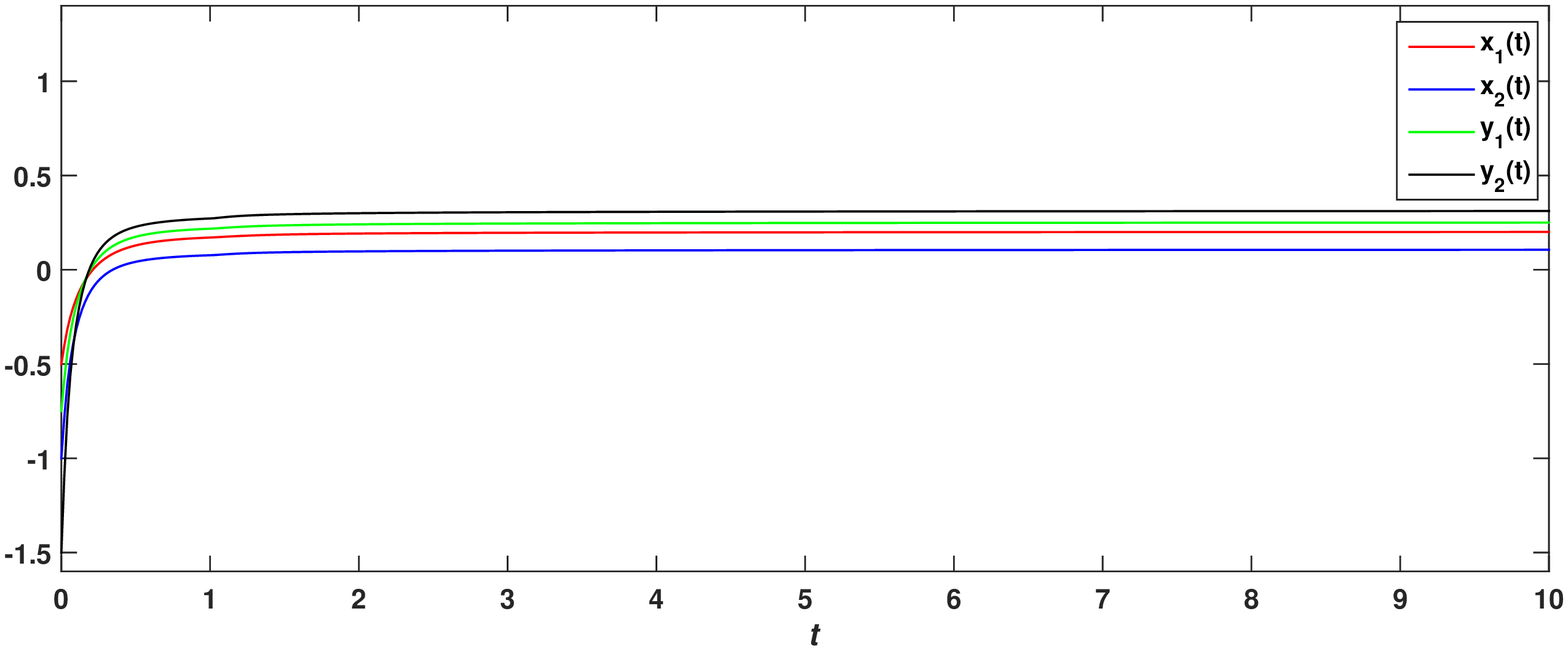}
\caption{Convergence  of the solutions  of system (\ref{examp1}) to the stationary state.}\label{fig1}
\end{figure}
\begin{figure}[h!]\centering
              \includegraphics[width=8cm, height=4cm]{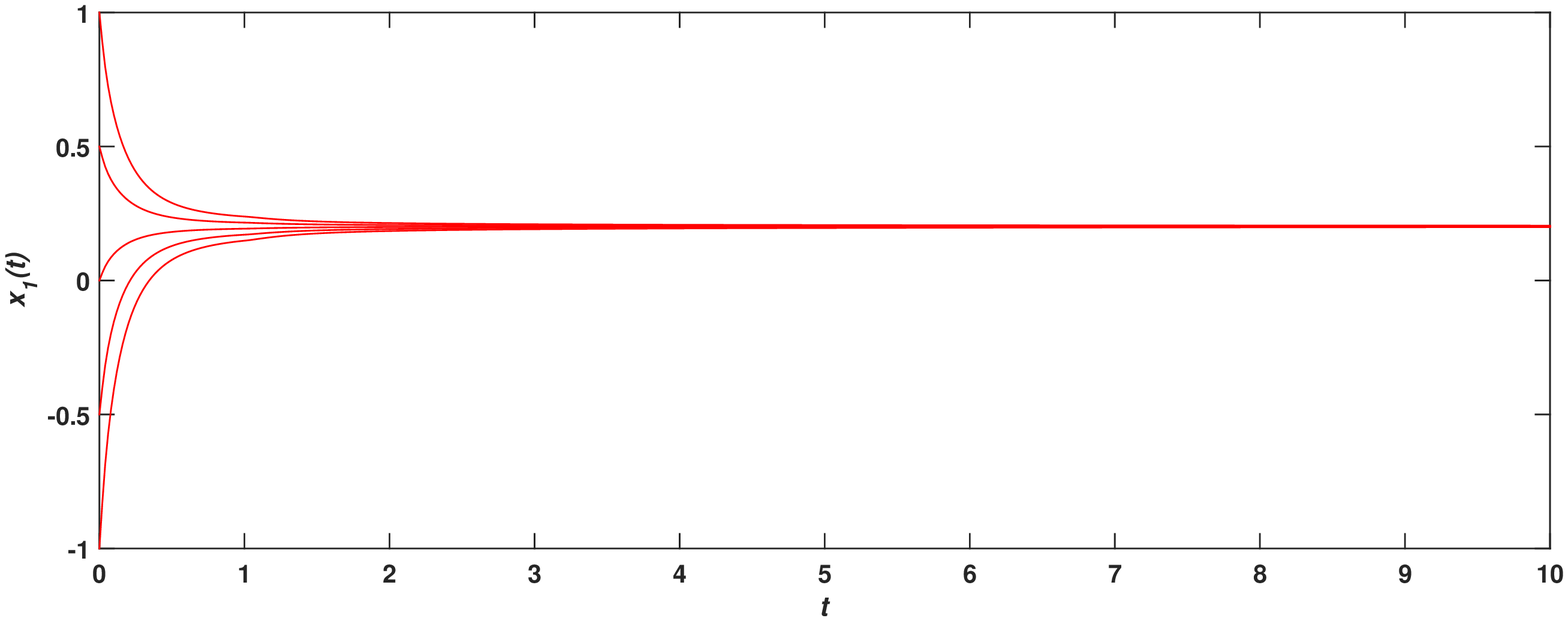}
        \qquad
           \includegraphics[width=8cm, height=4cm]{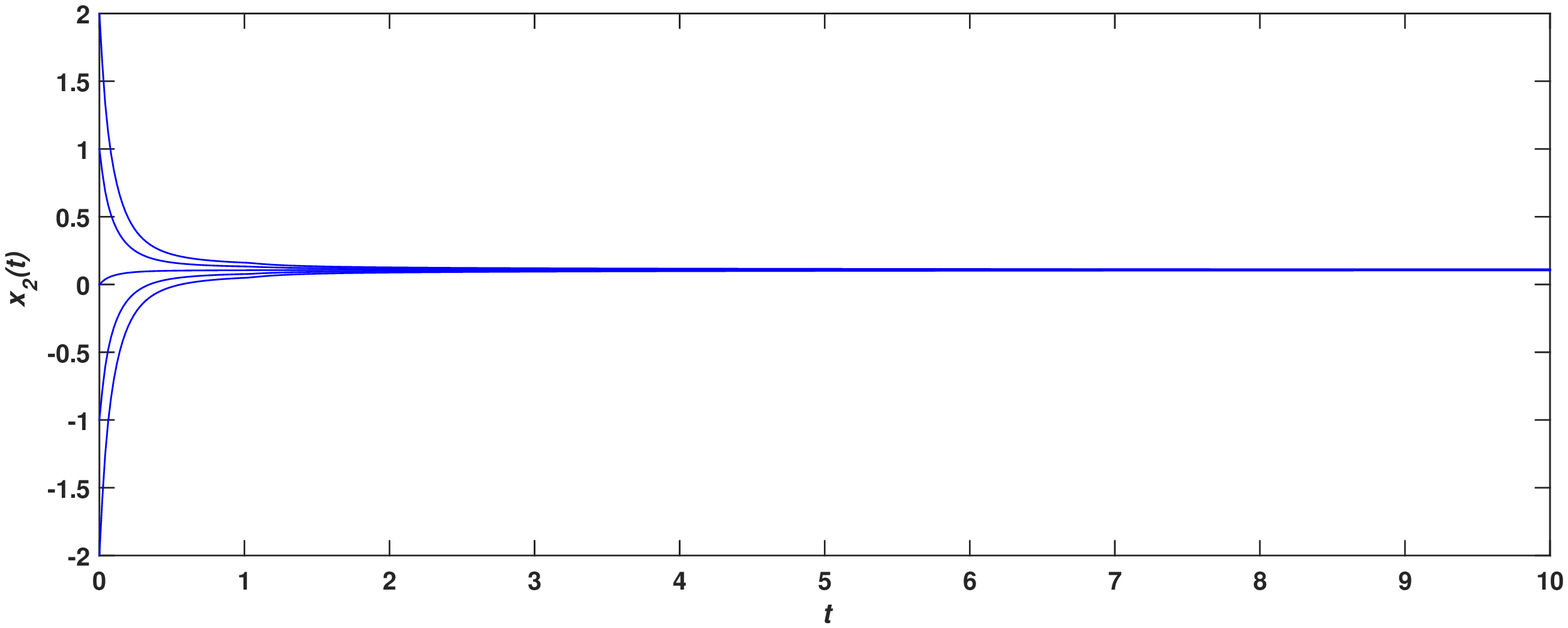}\\
              \includegraphics[width=8cm, height=4cm]{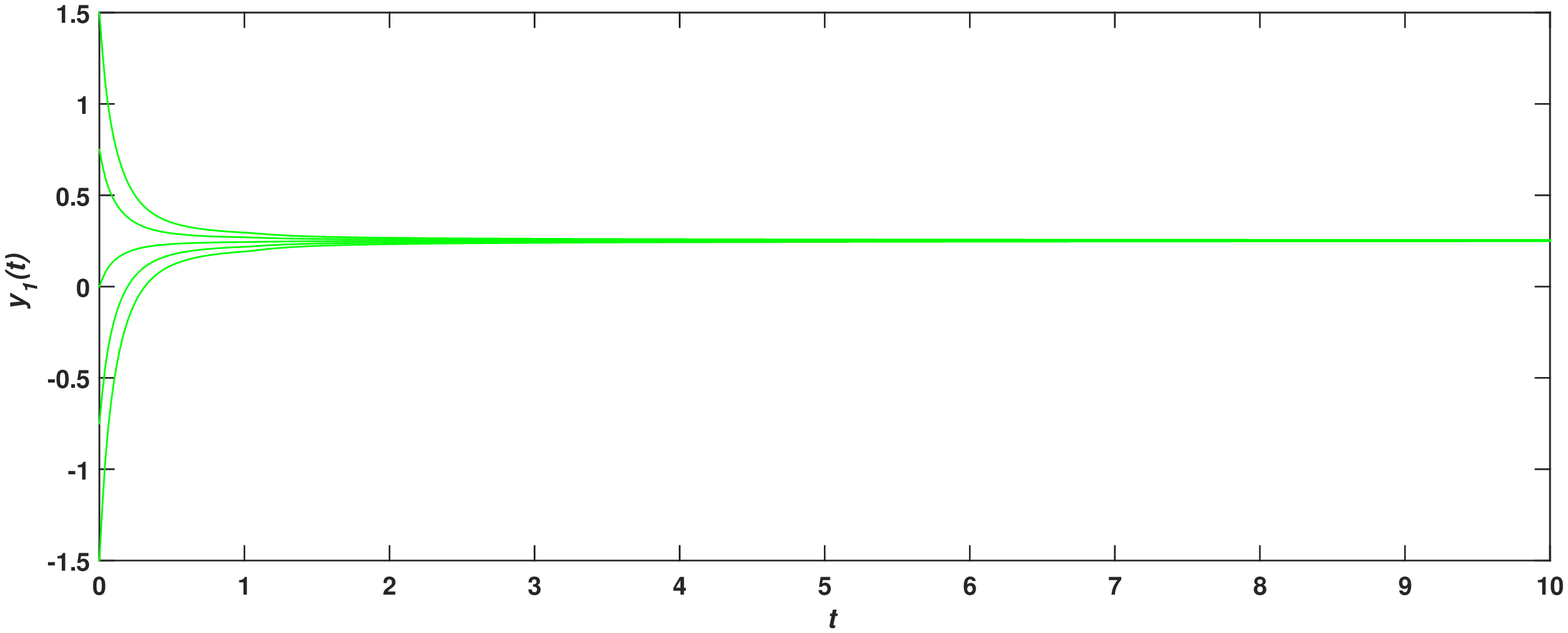}
        \qquad
           \includegraphics[width=8cm, height=4cm]{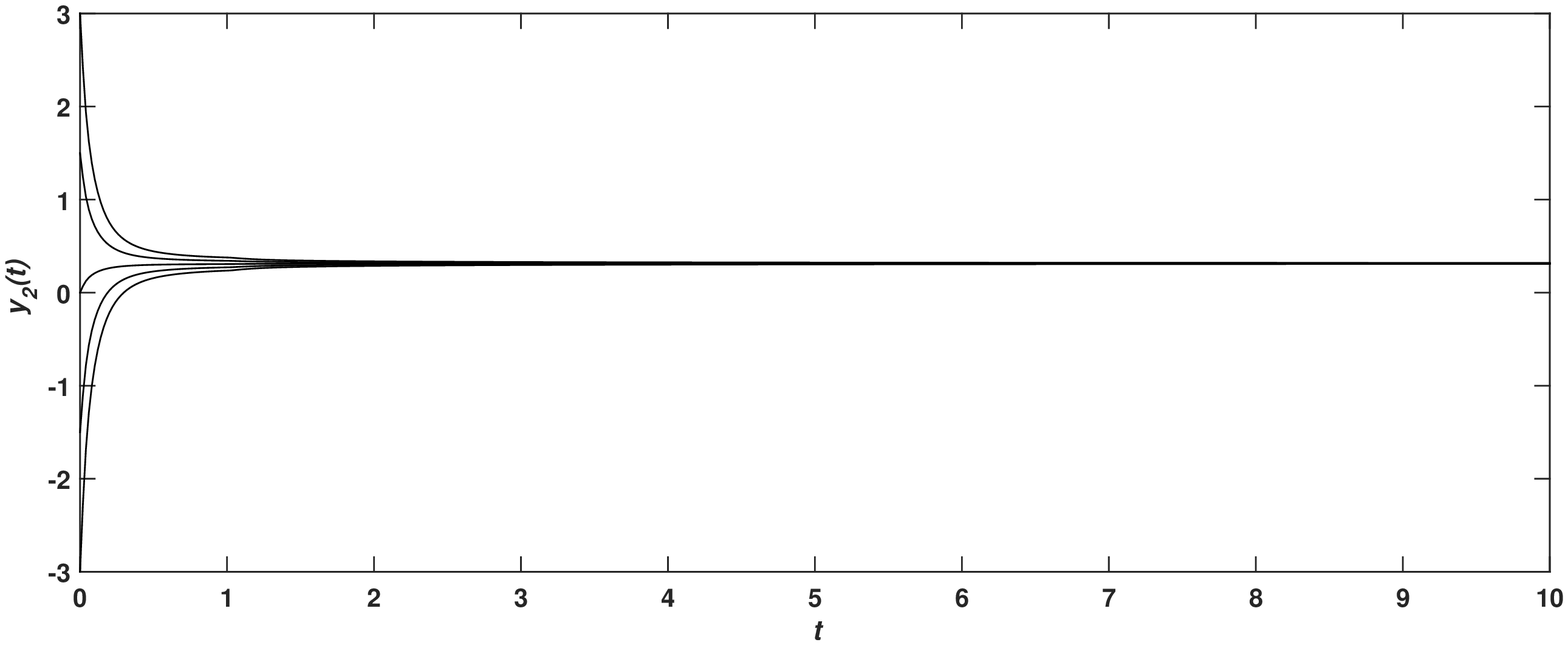}
\caption{Convergence  of   solutions of   system (\ref{examp1}) to the stationary state for various data.
}
\label{fig2}
\end{figure}
\newpage
\begin{figure} [h!]\centering
              \includegraphics[width=8cm, height=4cm]{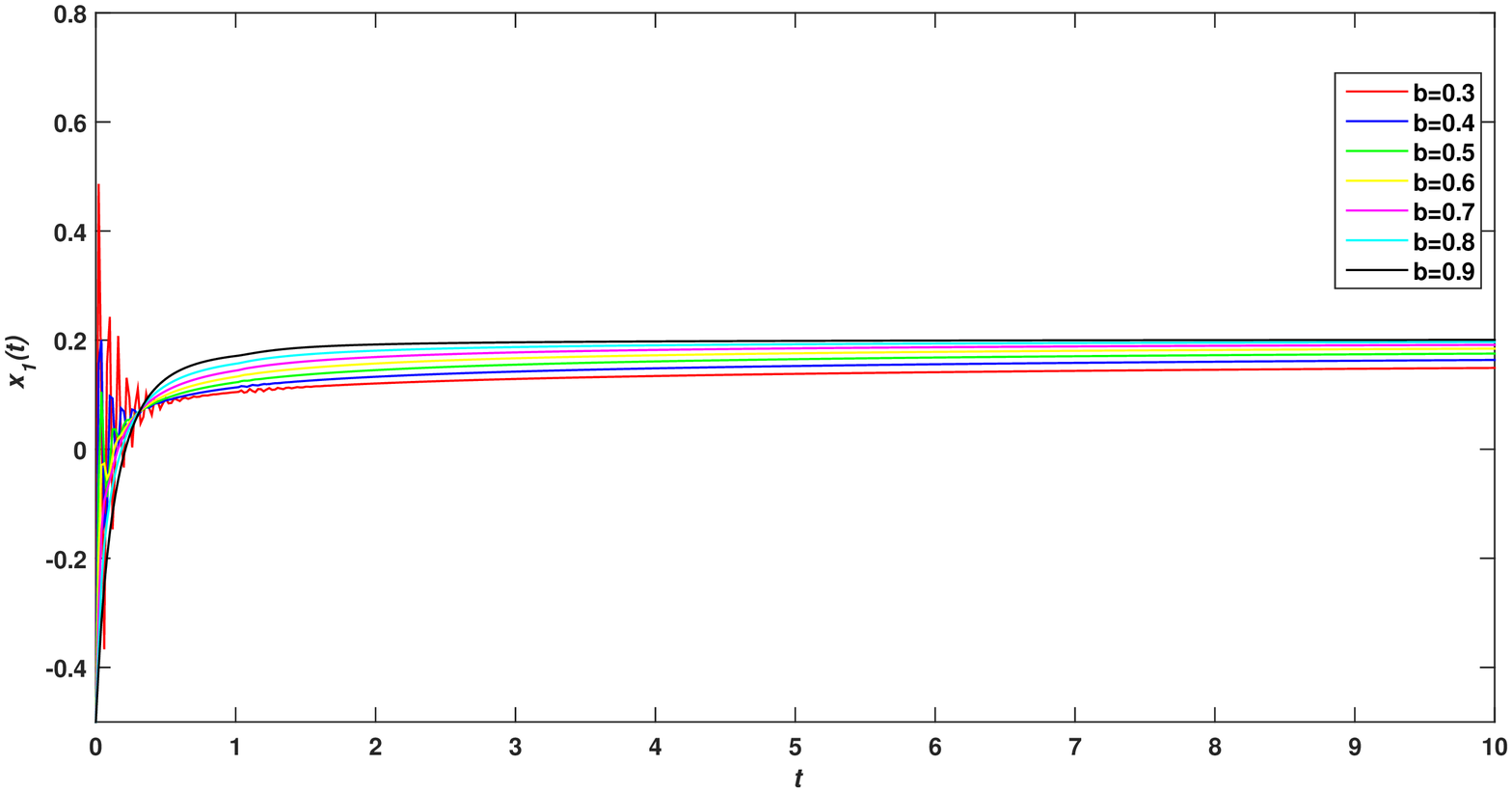}
        \qquad
           \includegraphics[width=8cm, height=4cm]{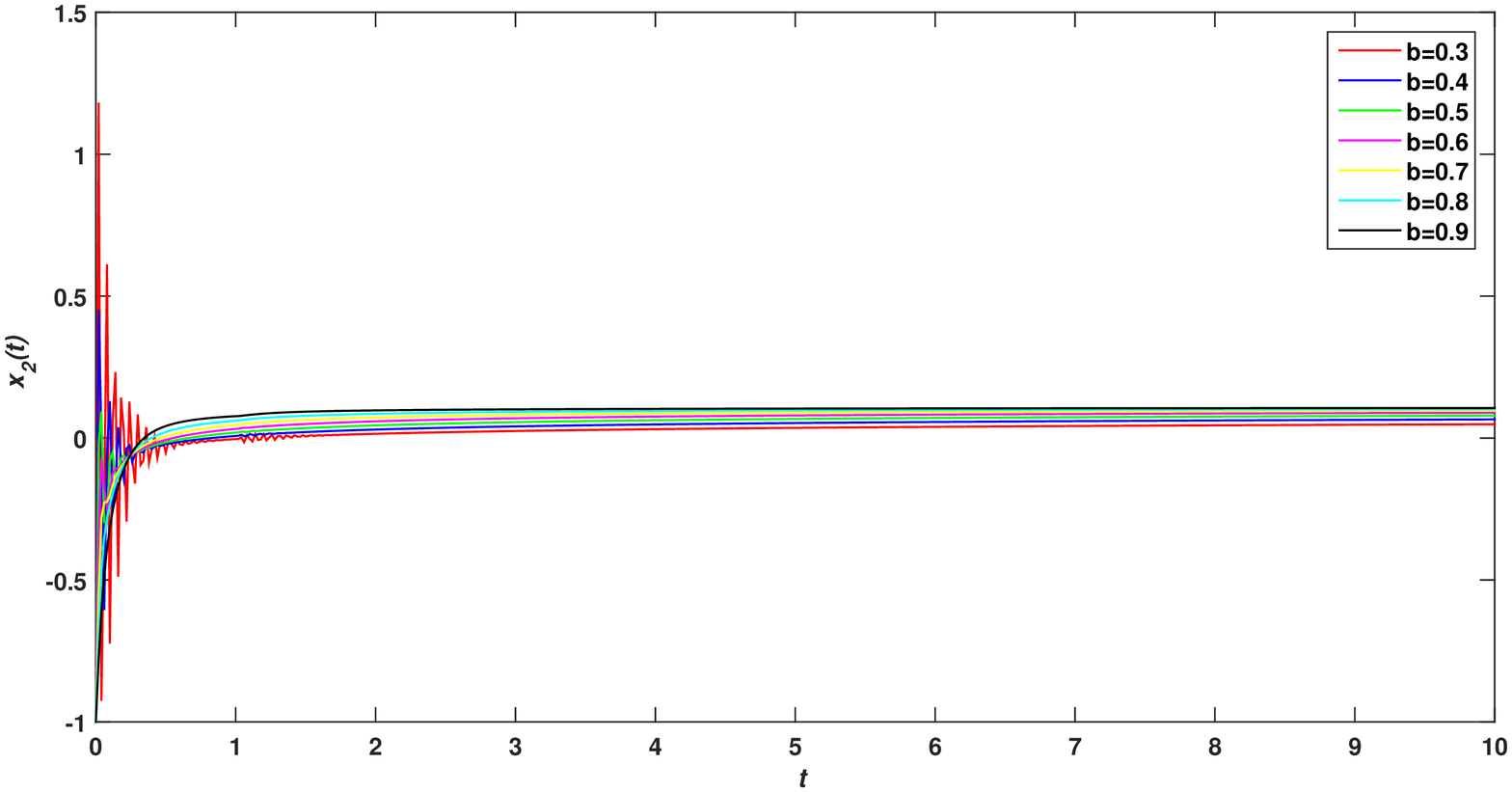}
\\
              \includegraphics[width=8cm, height=4cm]{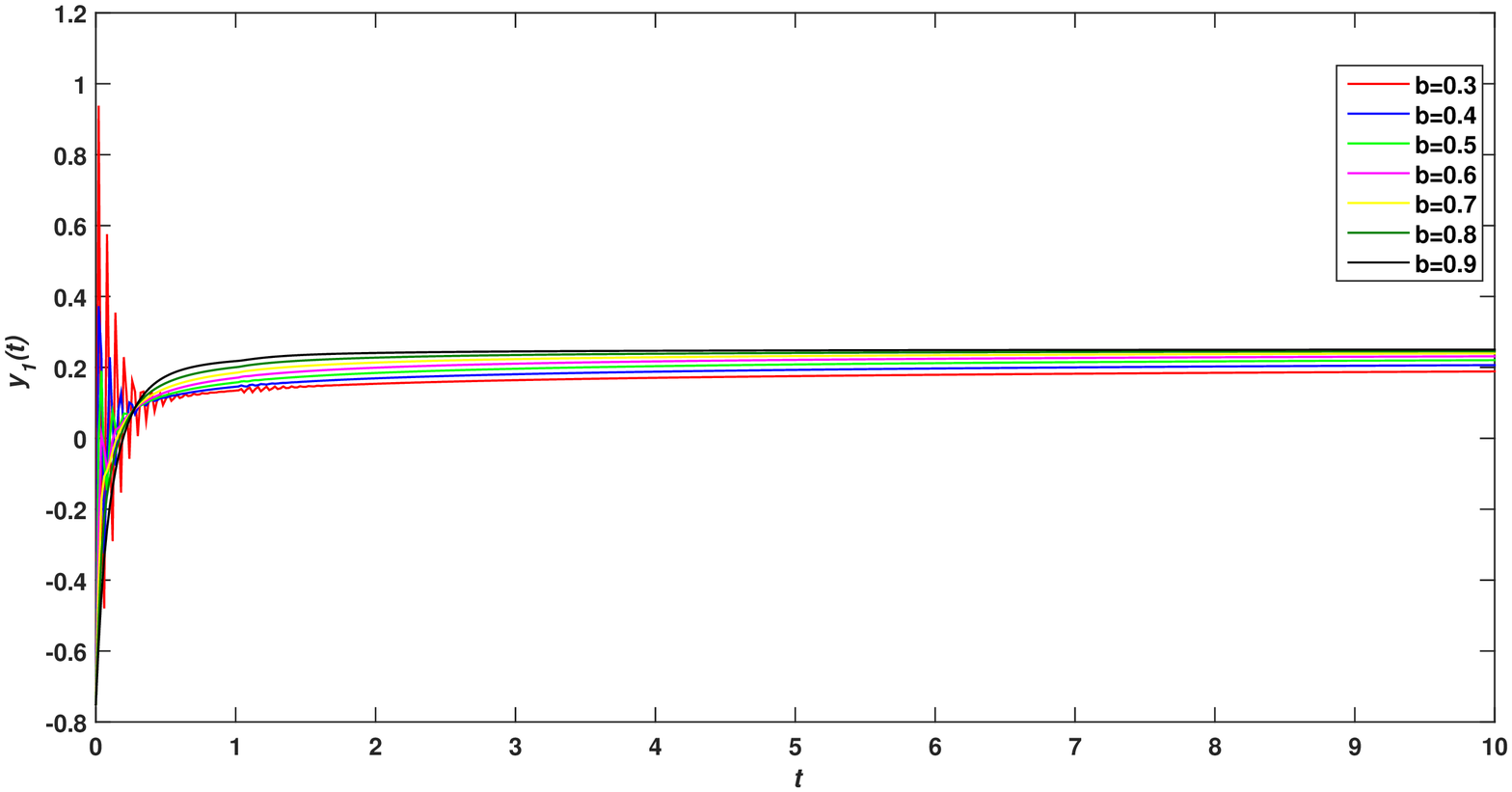}
        \qquad
           \includegraphics[width=8cm, height=4cm]{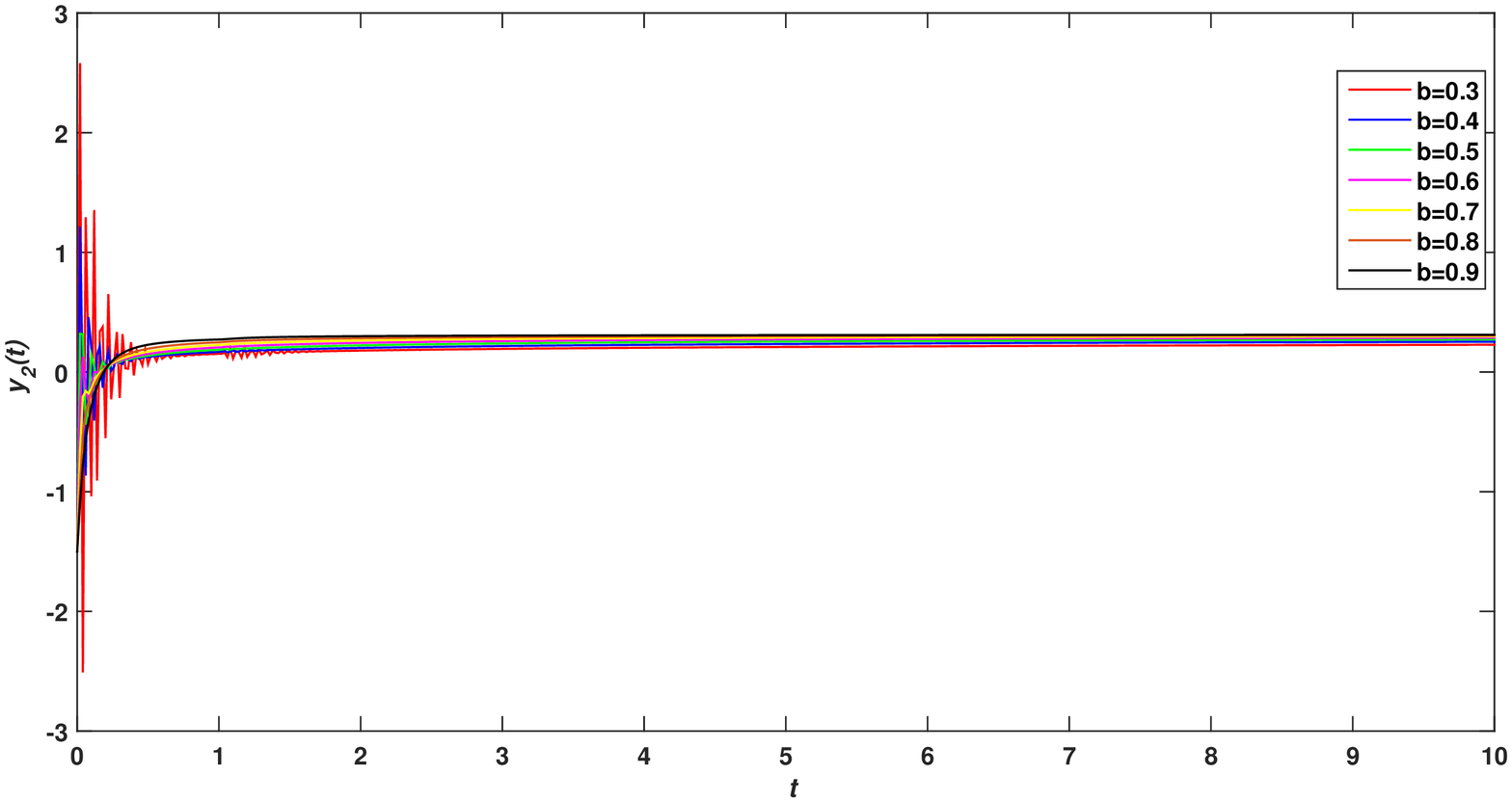}
\caption{Convergence of  solutions of  system (\ref{examp1}) to the stationary state  for different values of $b$.
}
\label{fig3}
\end{figure}
%Note that $(p_{1}(t), p_{2}(t), q_{1}(t), q_{2}(t))$ are the solutions of the error system without control.

%\begin{figure} [!h]\centering
%
%              \includegraphics[width=7cm, height=4cm]{figurebenc1.eps}}}
%
%\def\figurename{\textbf{Fig.}}
%\caption{Trajectories of the state of the error system without control.
%}
%\label{fig+subfig}
%\end{figure}
%\begin{figure} [!h]\centering
%    \subfigure{%
%    \rotatebox{0}{
%              \includegraphics[width=7cm, height=4cm]{figure10.eps}}}
%        \hspace{0.2in}
%    \subfigure{%
%    \rotatebox{0}{
%           \includegraphics[width=7cm, height=4cm]{figure11.eps}}}
%\def\figurename{\textbf{Fig.}}
%\caption{Decay  of  $(p_{1}(t), e_{1}(t))$ and $(p_{2}(t), e_{2}(t))$ to the equilibrium point.
%}
%\label{fig+subfig}
%\end{figure}
%
%
%\begin{figure} [!h]\centering
%    \subfigure{%
%    \rotatebox{0}{
%              \includegraphics[width=7cm, height=4cm]{figuer12.eps}}}
%        \hspace{0.2in}
%    \subfigure{%
%    \rotatebox{0}{
%           \includegraphics[width=7cm, height=4cm]{figure14.eps}}}
%\def\figurename{\textbf{Fig.}}
%\caption{Decay  of  $(q_{1}(t), \bar{e}_{1}(t))$ and $(q_{2}(t), \bar{e}_{2}(t))$ to the equilibrium point.
%}
%\label{fig+subfig}
%\end{figure}
\begin{figure}[h!]\centering
              \includegraphics[width=8cm, height=4cm]{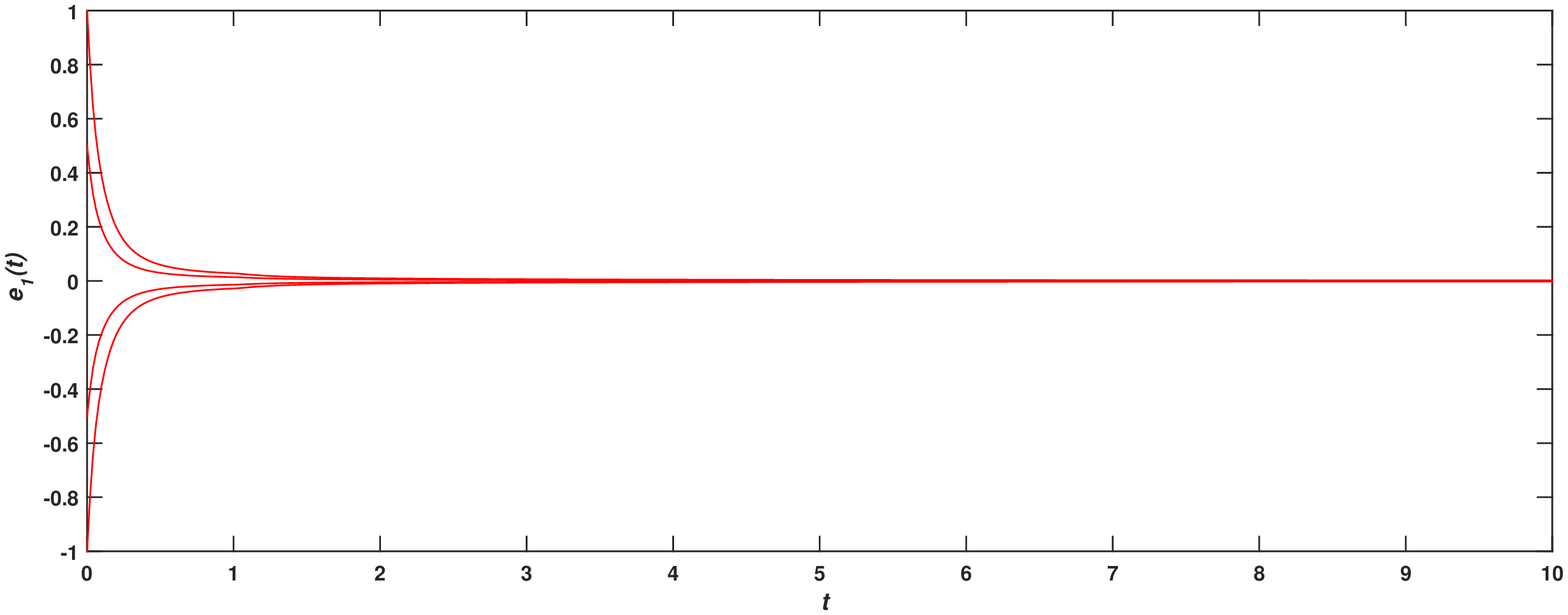}
        \qquad
           \includegraphics[width=8cm, height=4cm]{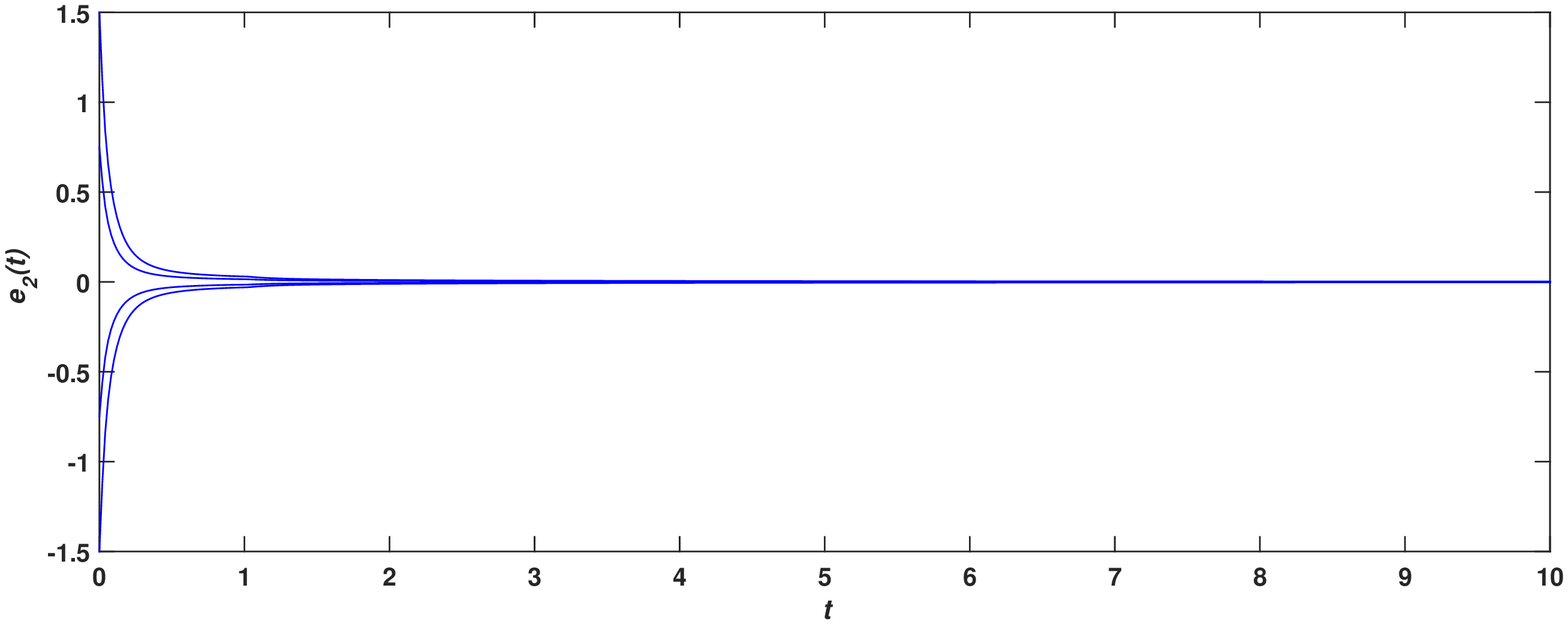}
\\
              \includegraphics[width=8cm, height=4cm]{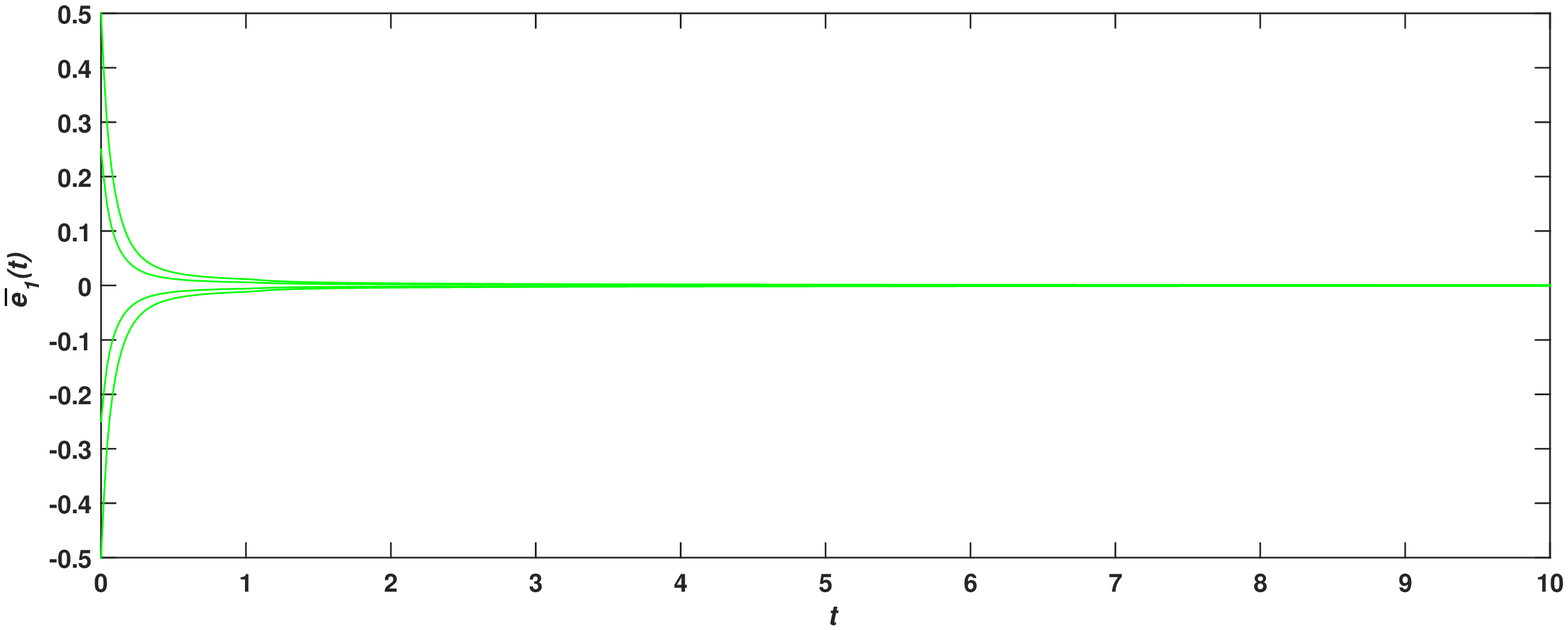}
        \qquad
           \includegraphics[width=8cm, height=4cm]{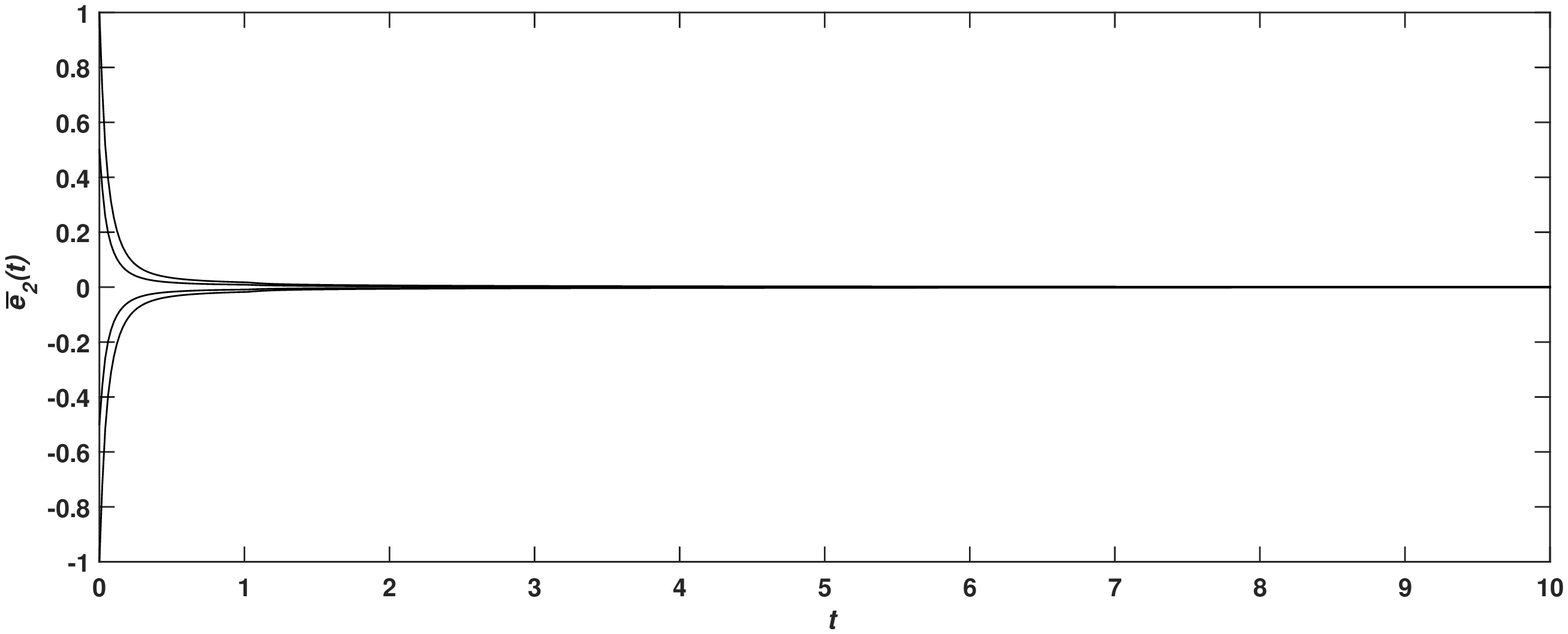}

\caption{Convergence  of  solutions of system (\ref{examp3}) to the stationary state  for various data.
}
\label{fig4}
\end{figure}
%\newpage
\begin{figure} [h!]\centering
              \includegraphics[width=8cm, height=4cm]{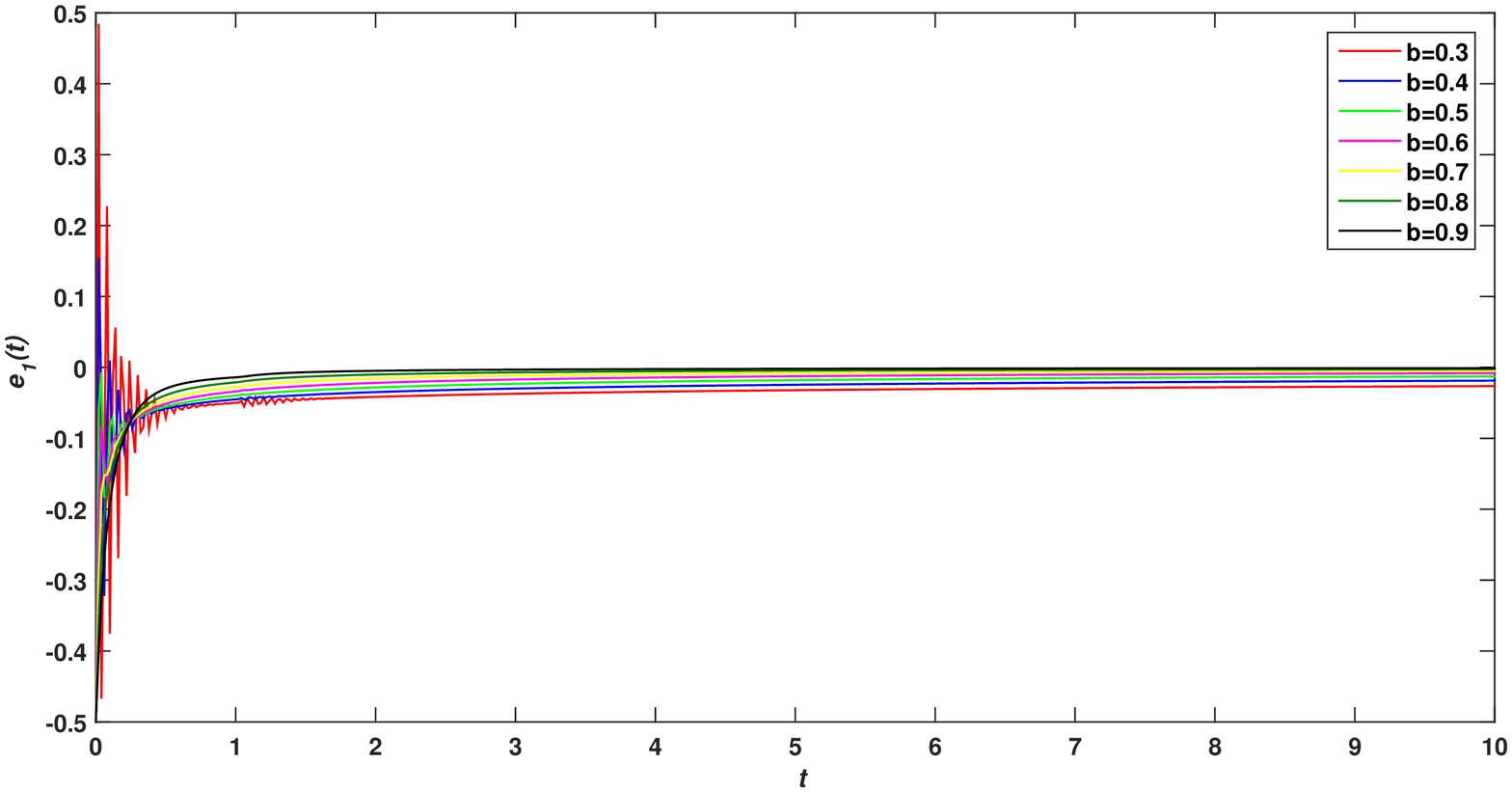}
        \qquad
           \includegraphics[width=8cm, height=4cm]{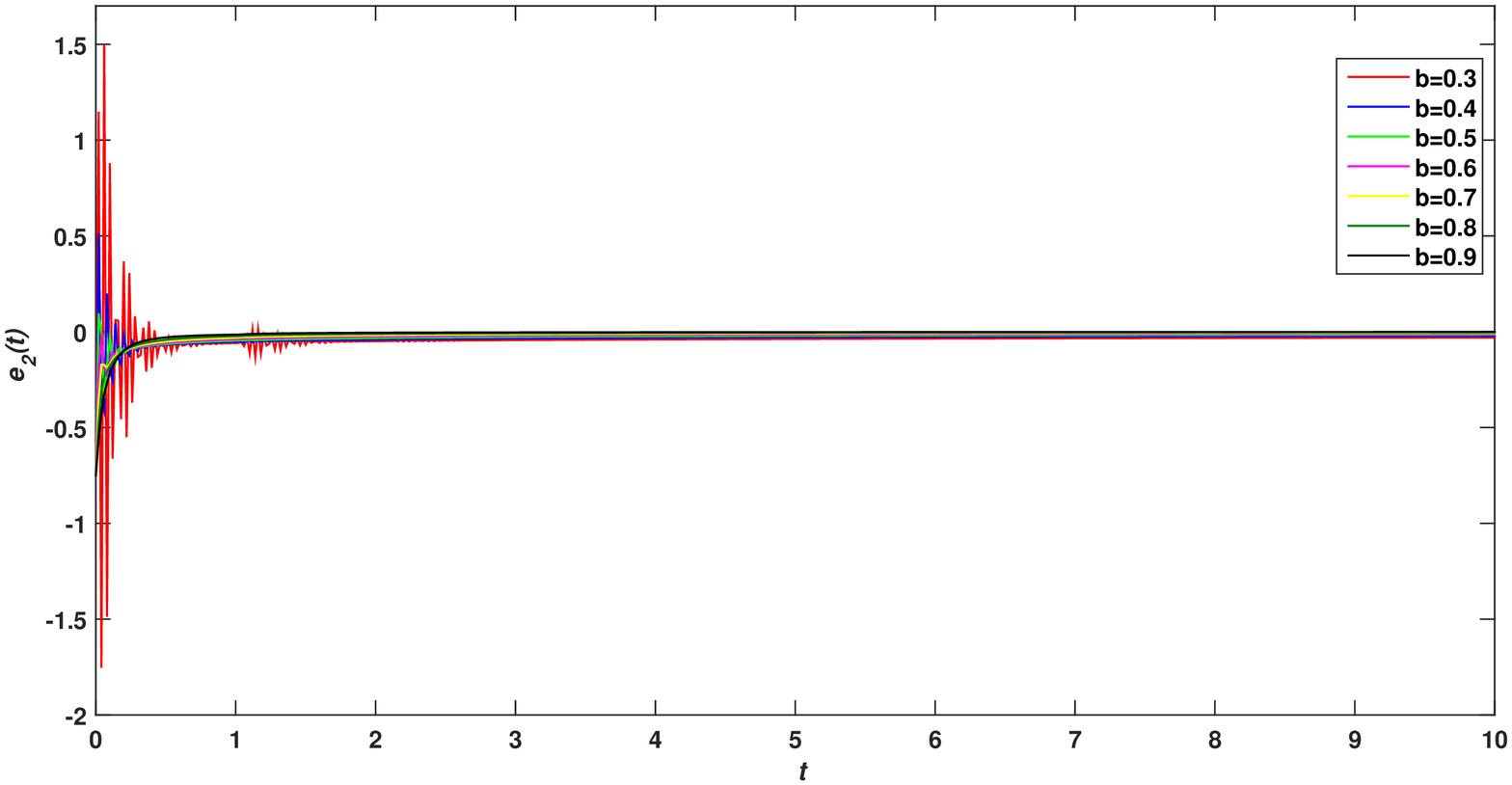}
           \\
              \includegraphics[width=8cm, height=4cm]{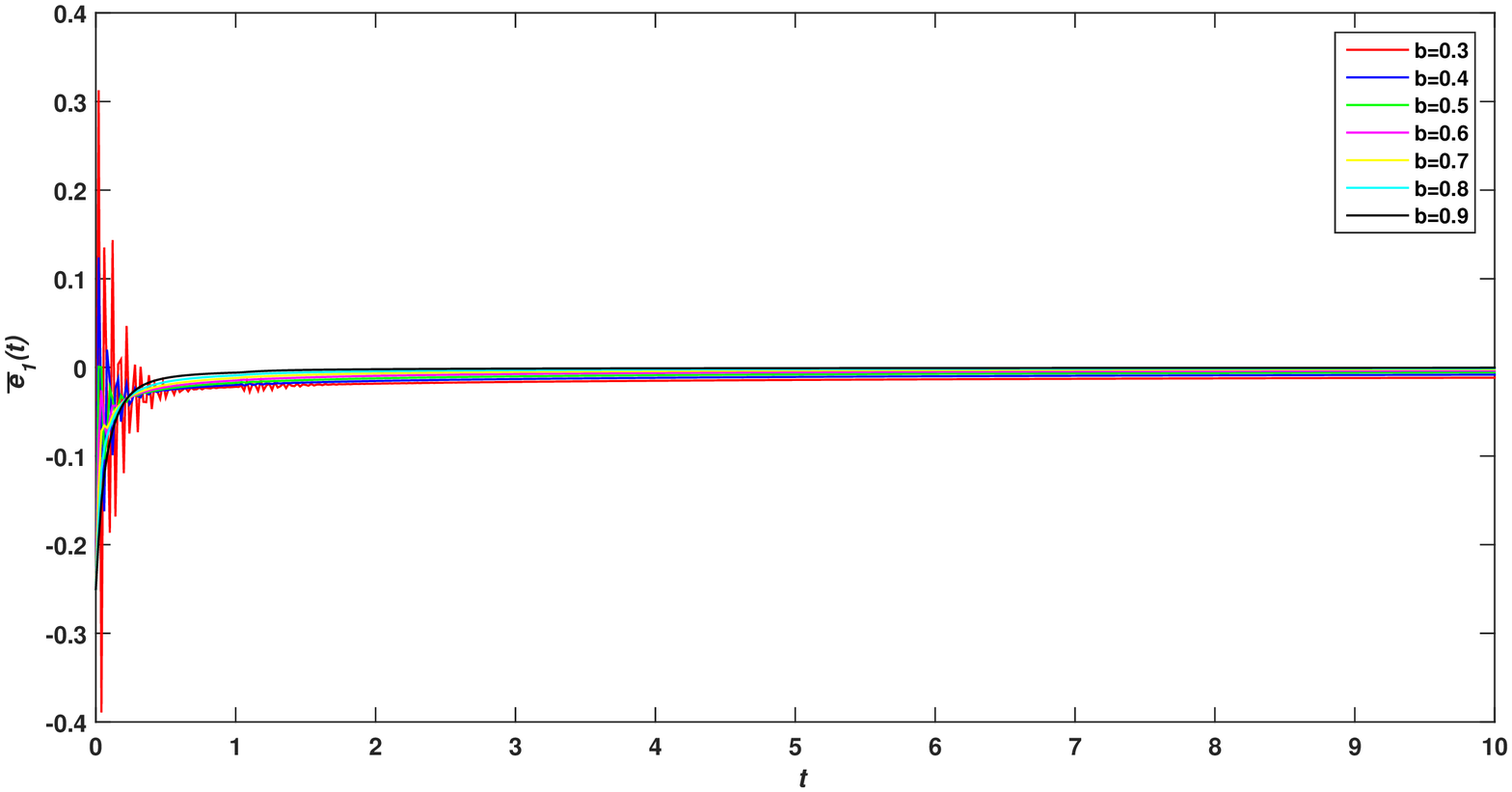}
        \qquad
           \includegraphics[width=8cm, height=4cm]{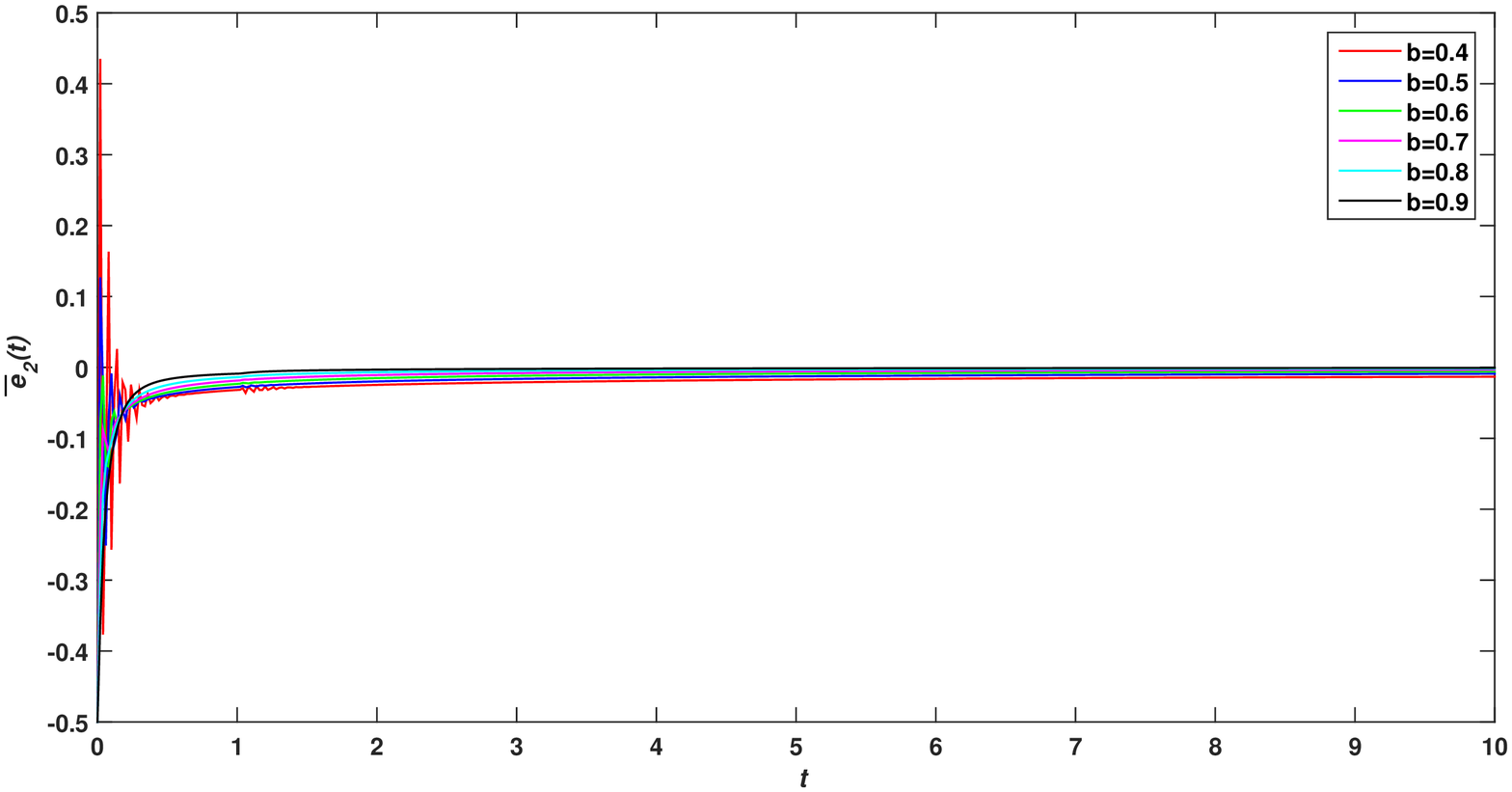}
\caption{Convergence  of  solutions of system (\ref{examp3})  to the stationary state  for different values of $b$.
}
\label{fig5}
\end{figure}
\begin{figure} [tbp]\centering
              \includegraphics[width=.75\textwidth]{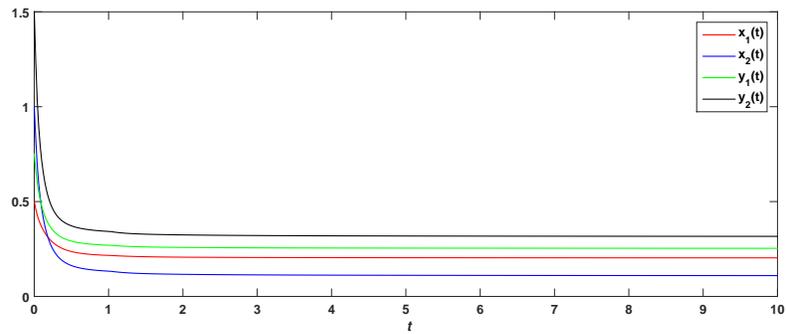}
\caption{Decay  of the solutions  of system (\ref{examp1}) to the stationary state.
}
\label{fig6}
\end{figure}
\begin{figure} [h!]\centering
              \includegraphics[width=8cm, height=4cm]{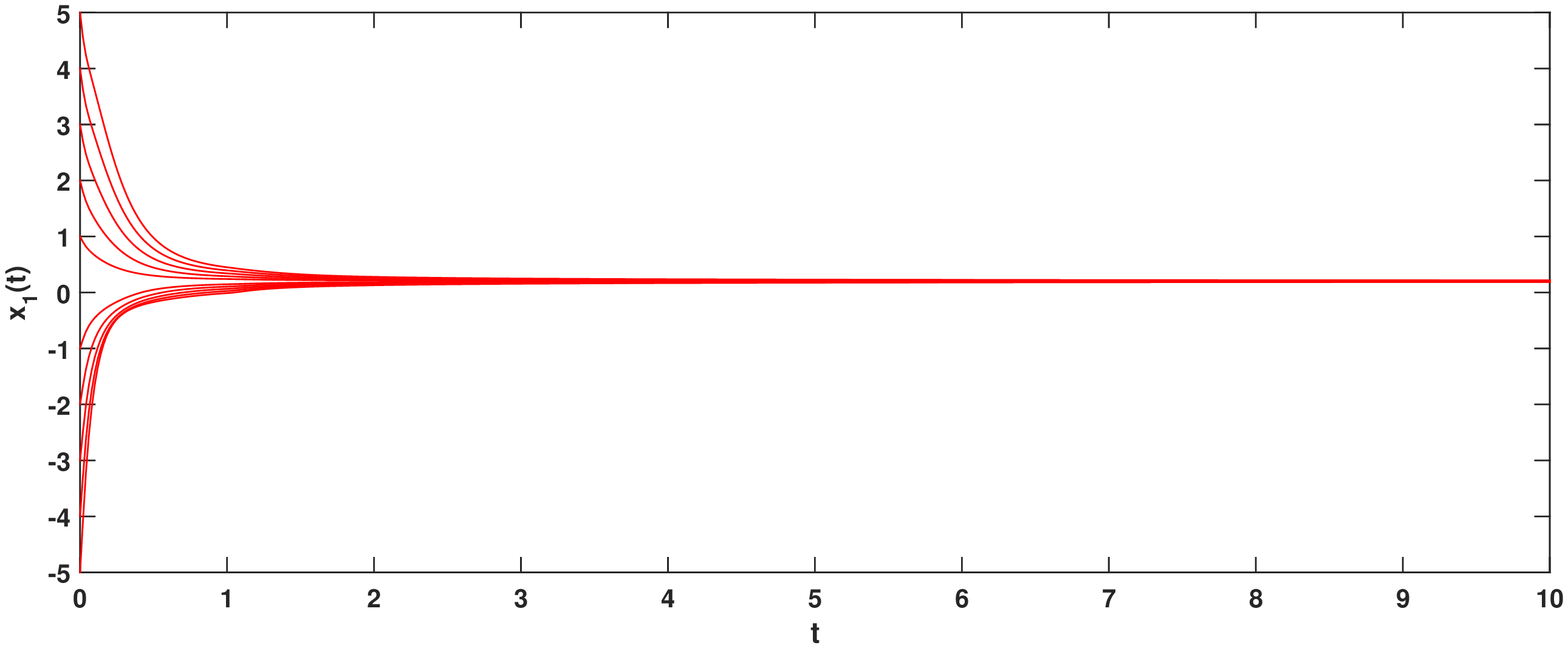}
        \qquad
           \includegraphics[width=8cm, height=4cm]{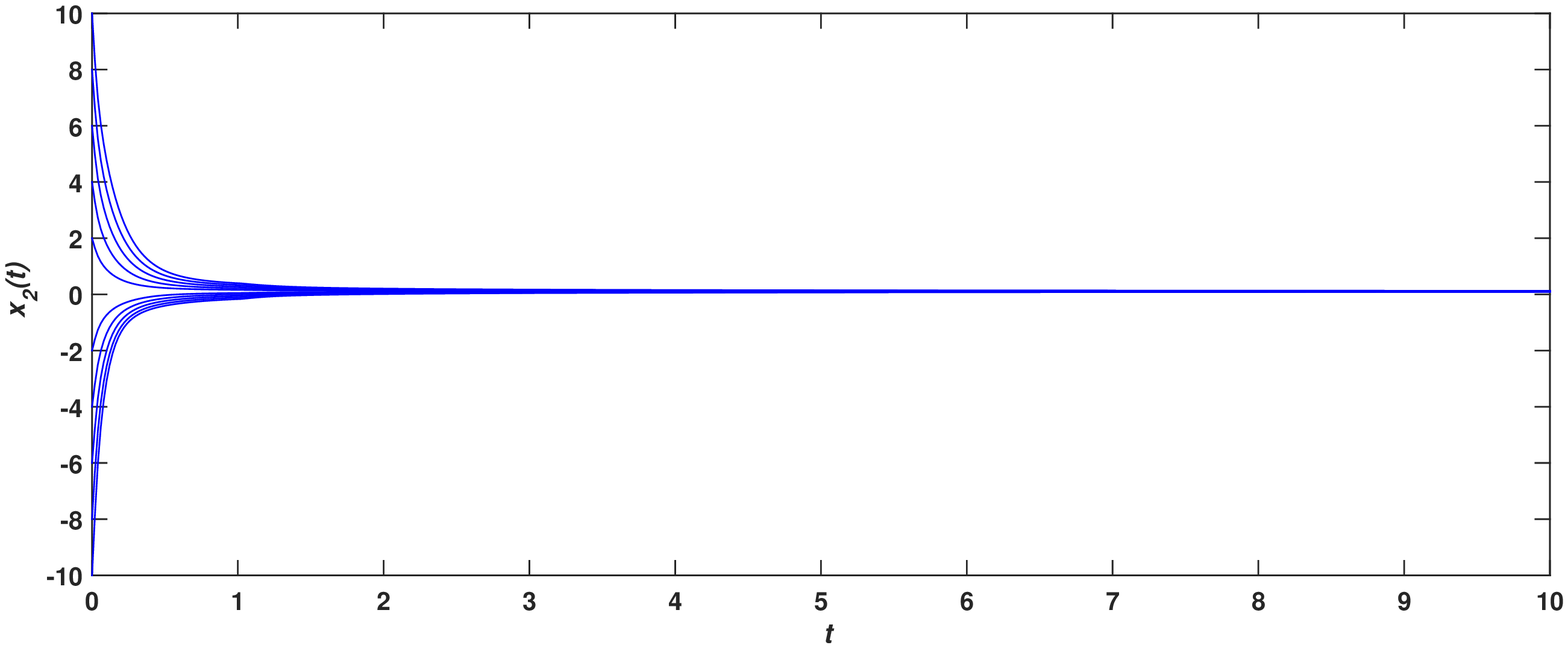}
\\
              \includegraphics[width=8cm, height=4cm]{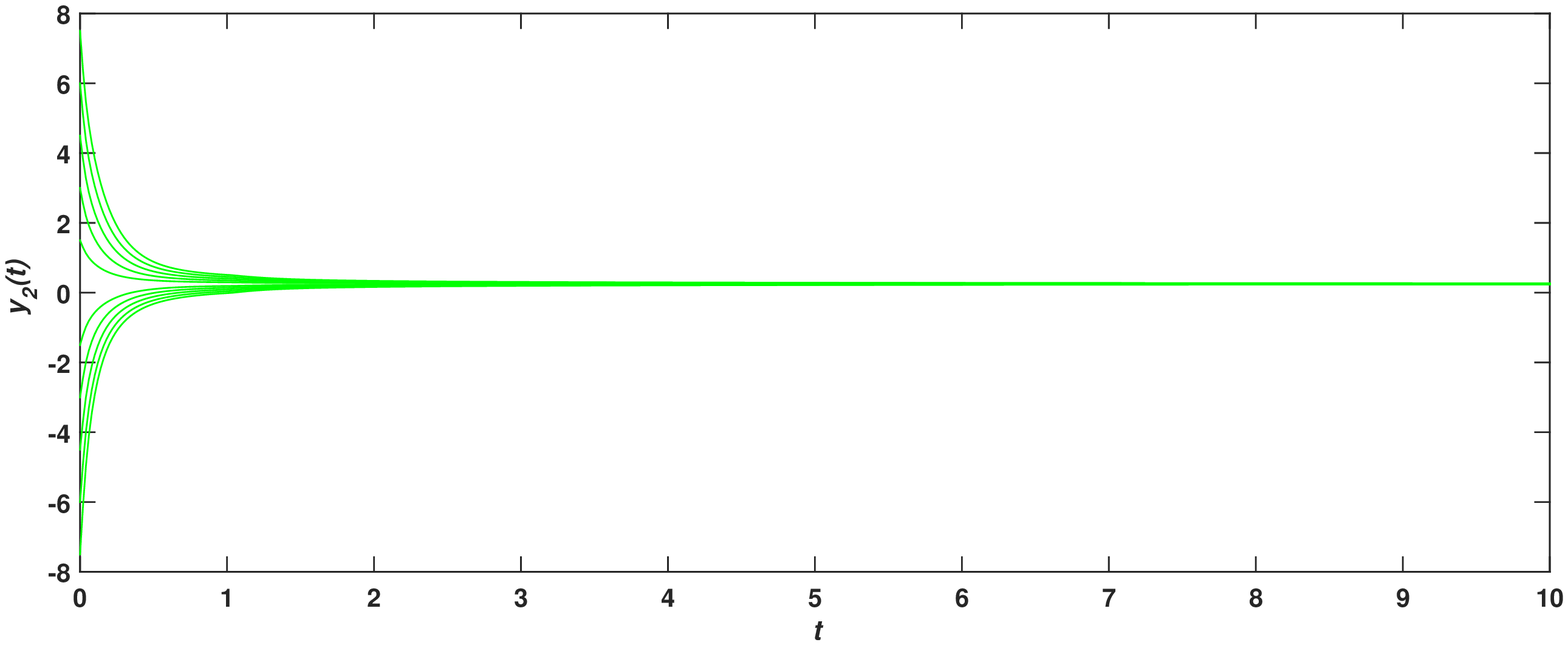}
        \qquad
           \includegraphics[width=8cm, height=4cm]{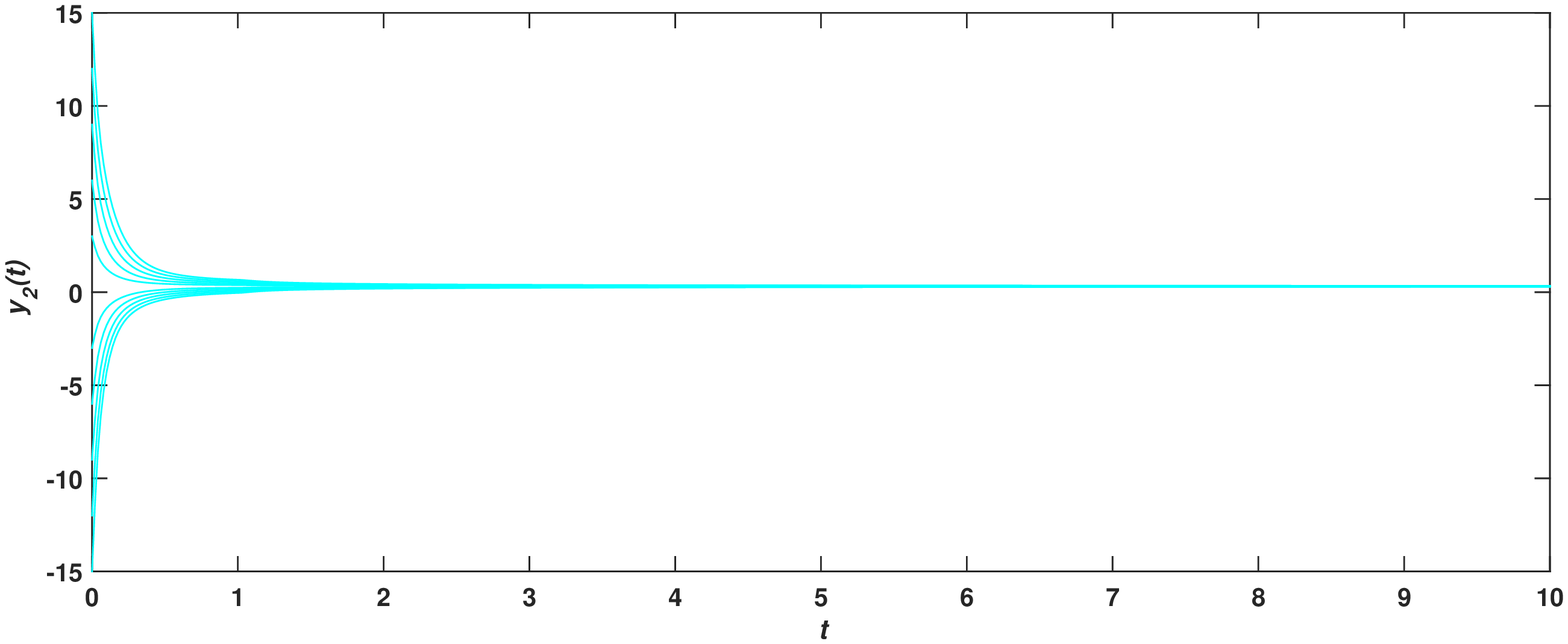}
\caption{Convergence  of   solutions  of  system (\ref{examp1}) to the stationary state  for various data.
}
\label{fig7}
\end{figure}
\newpage
\begin{figure} [h!]\centering
              \includegraphics[width=8cm, height=4cm]{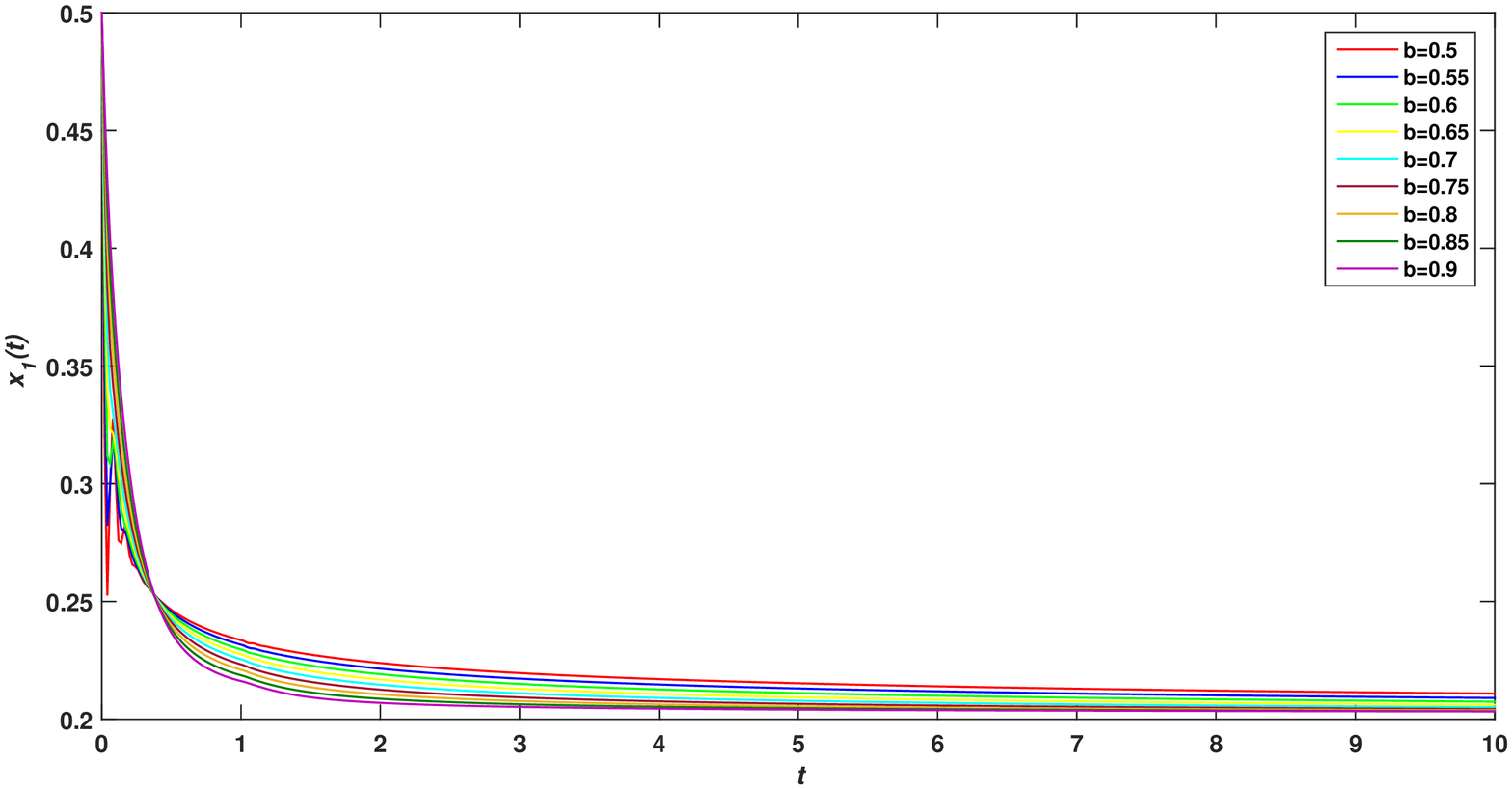}
        \qquad
           \includegraphics[width=8cm, height=4cm]{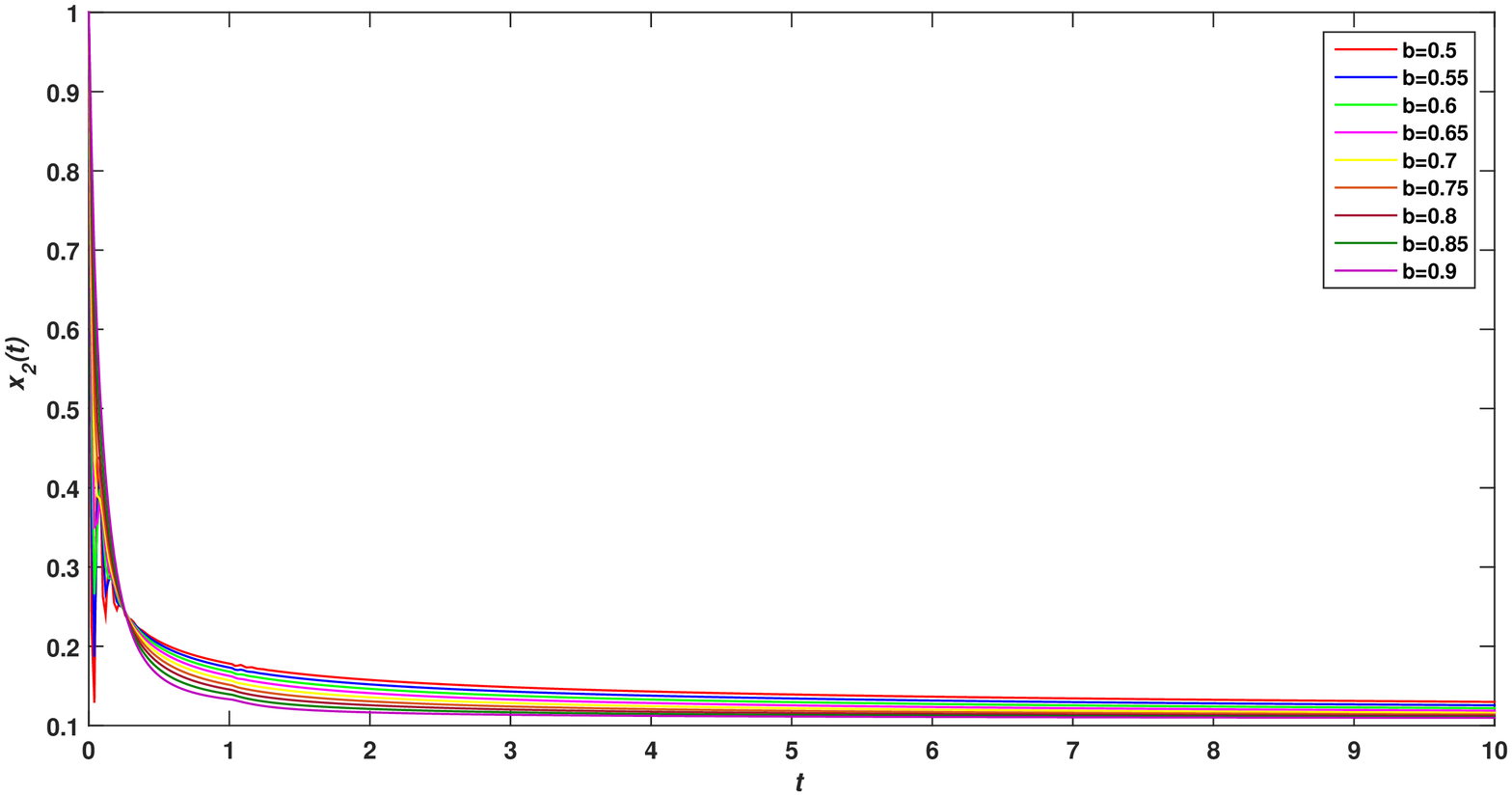}
\\
              \includegraphics[width=8cm, height=4cm]{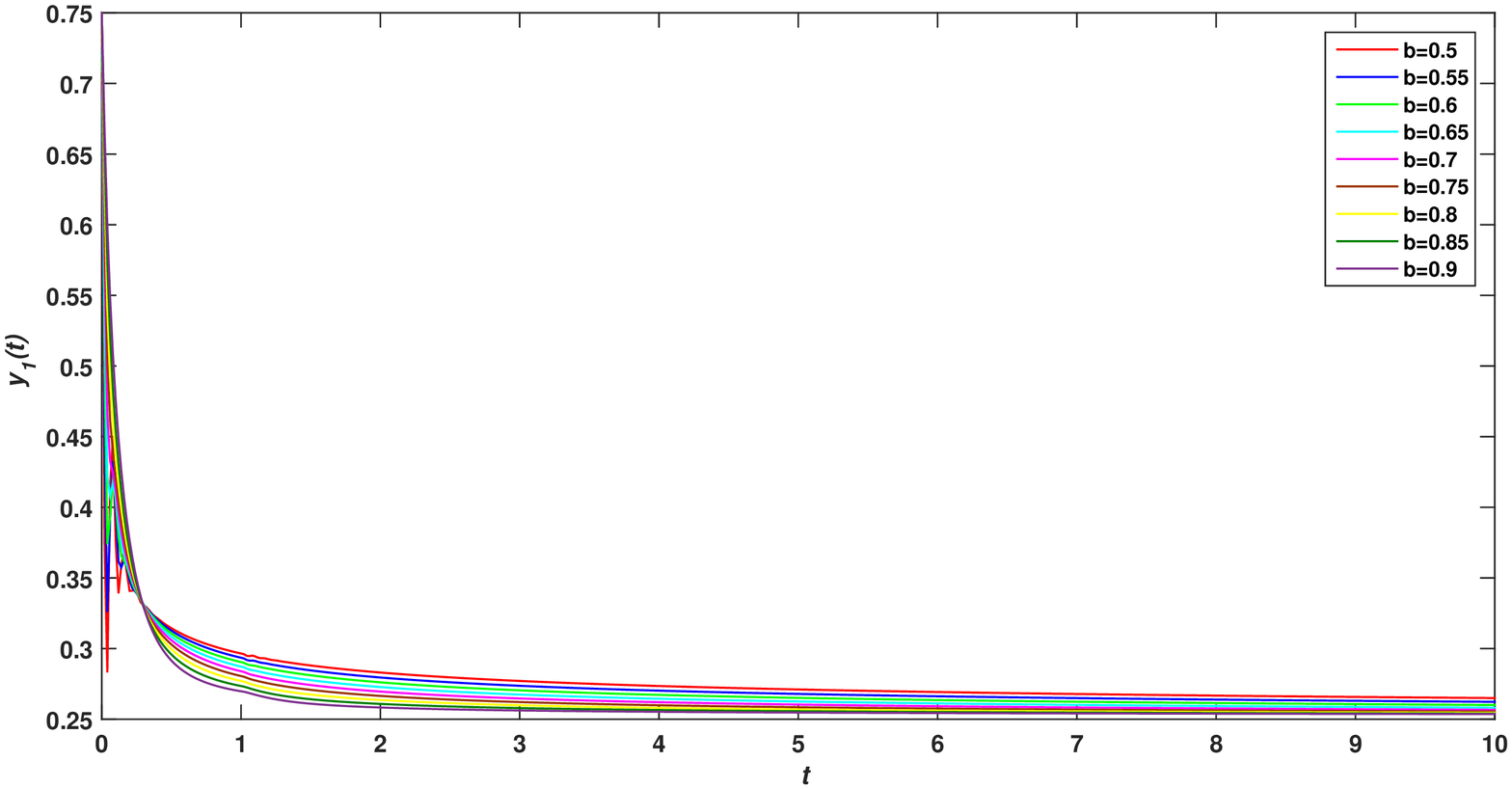}
        \qquad
           \includegraphics[width=8cm, height=4cm]{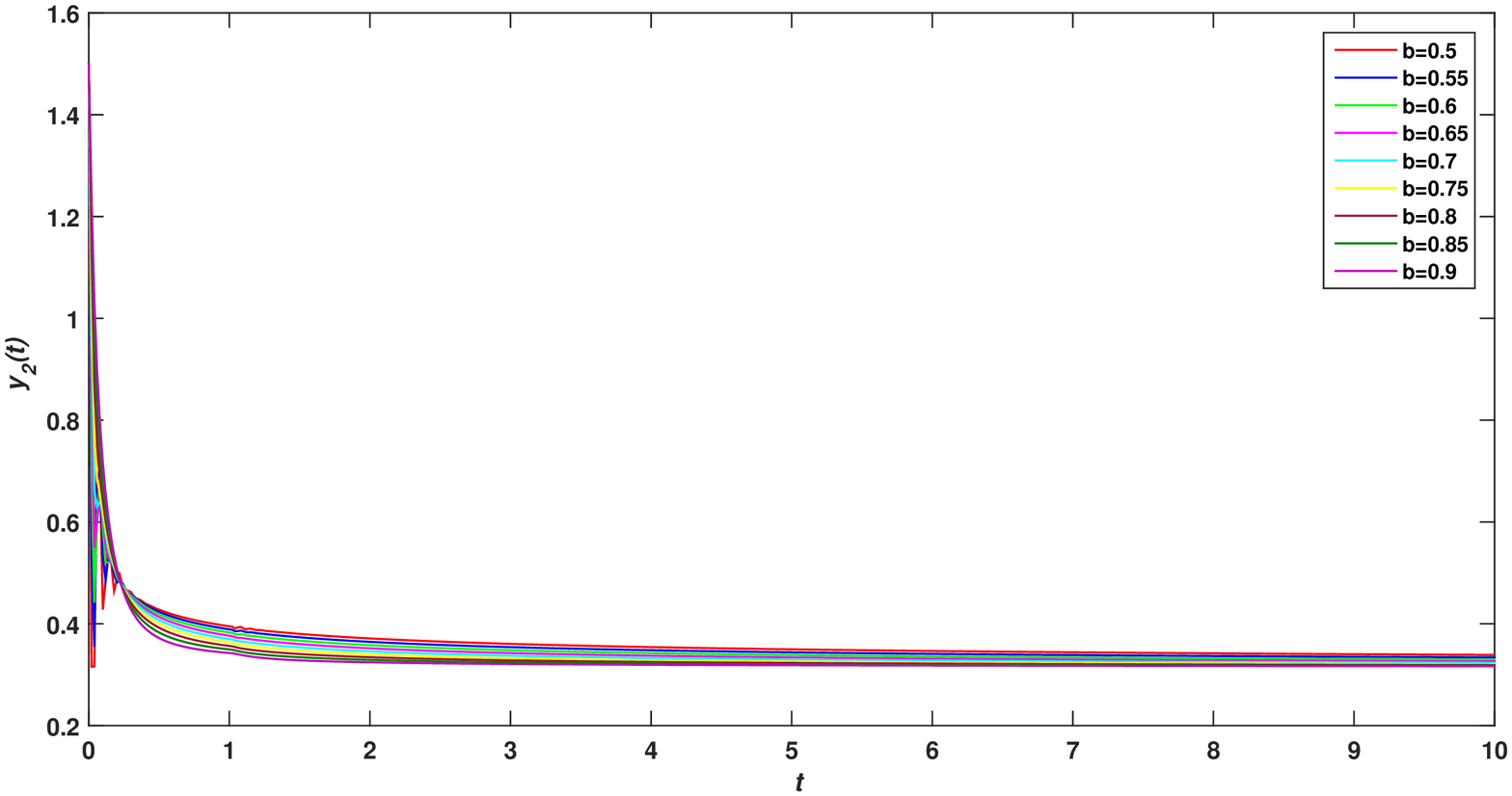}
\caption{Decay of  solutions of  system (\ref{examp1}) to the stationary state for different values of $b$.
}
\label{fig8}
\end{figure}
%%%%%%%%%%%%%%%%%%%%%%%%%%%%%%%%%%%%%%%%ERROR
%\begin{figure} [tbp]\centering
%              \includegraphics[width=.75\textwidth]{figureub11.eps}
%        \qquad
%           \includegraphics[width=.75\textwidth]{figureub12.eps}
%\\
%              \includegraphics[width=.75\textwidth]{figureub13.eps}
%        \qquad
%           \includegraphics[width=.75\textwidth]{figureub114.eps}
%\caption{Decay  of  $(q_{1}(t), \bar{e}_{1}(t))$ and $(q_{2}(t), \bar{e}_{2}(t))$ to the equilibrium point.
%}
%\label{fig+subfig}
%\end{figure}
\begin{figure} [h!]\centering
              \includegraphics[width=8cm, height=4cm]{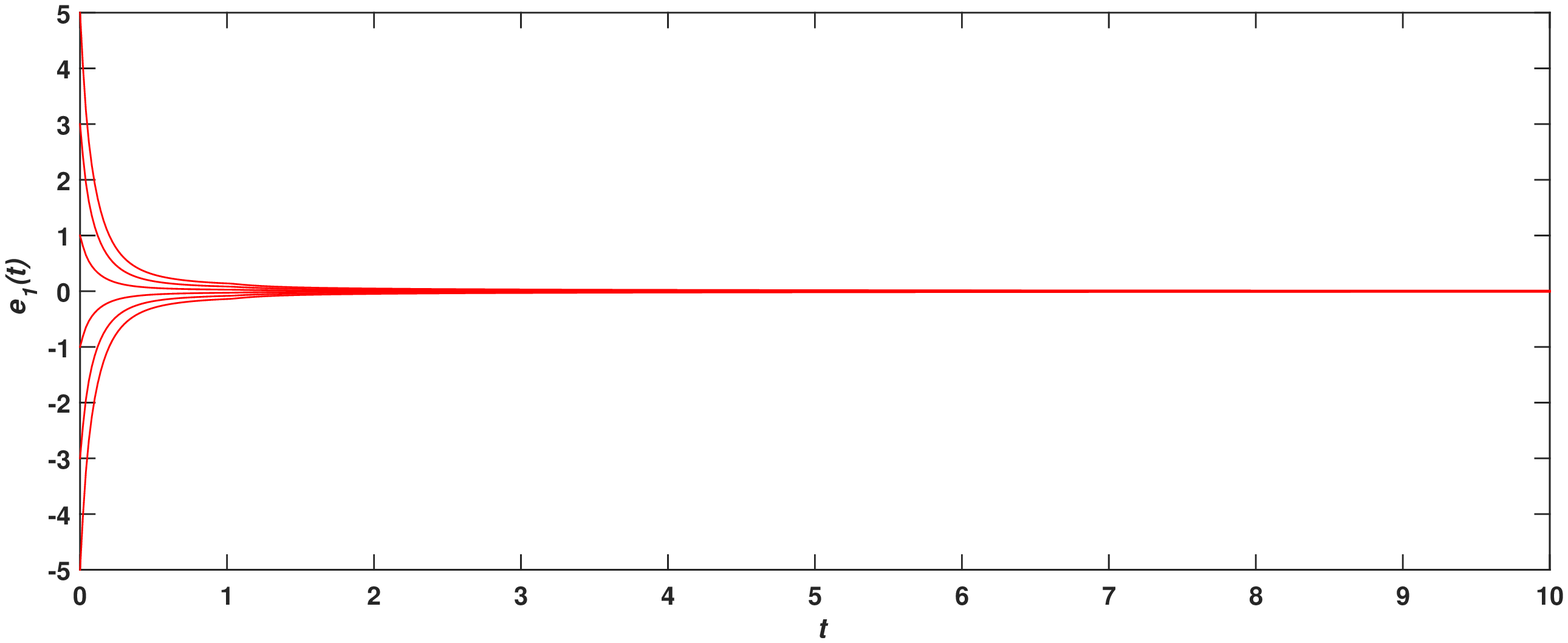}
        \qquad
           \includegraphics[width=8cm, height=4cm]{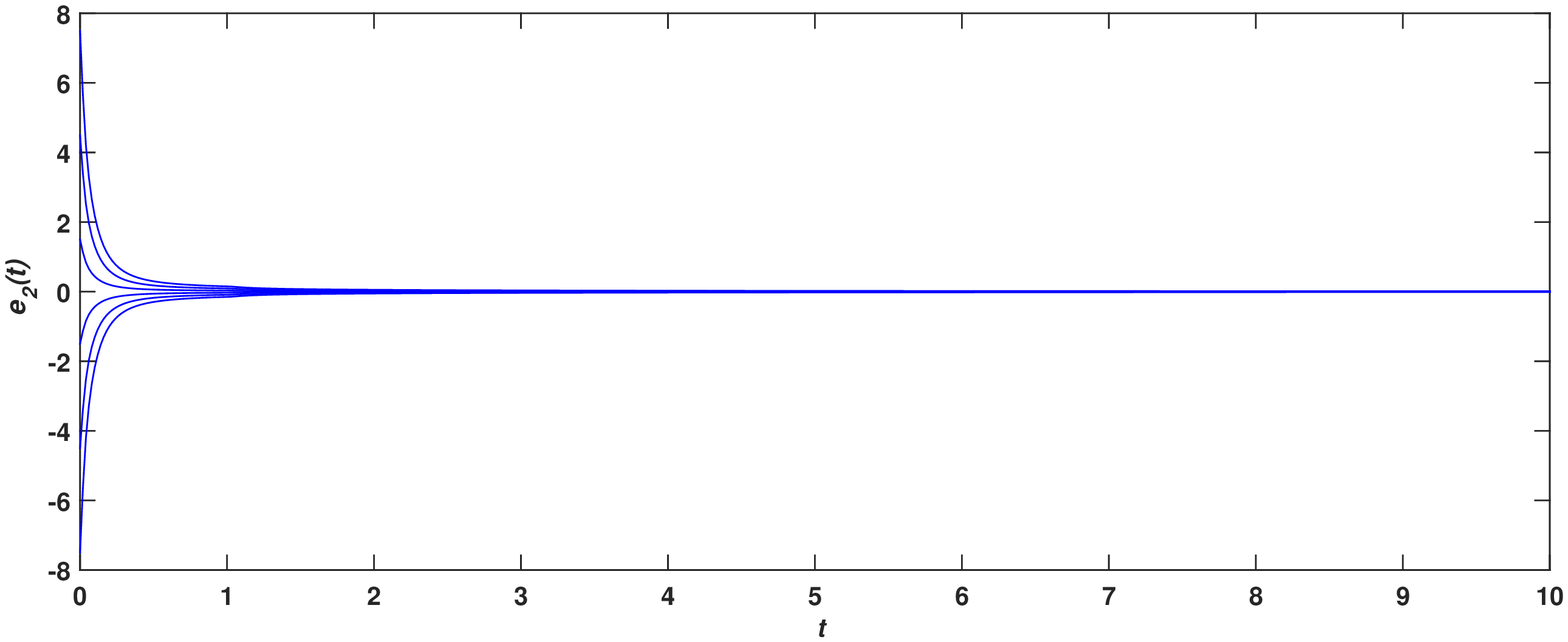}
\\
              \includegraphics[width=8cm, height=4cm]{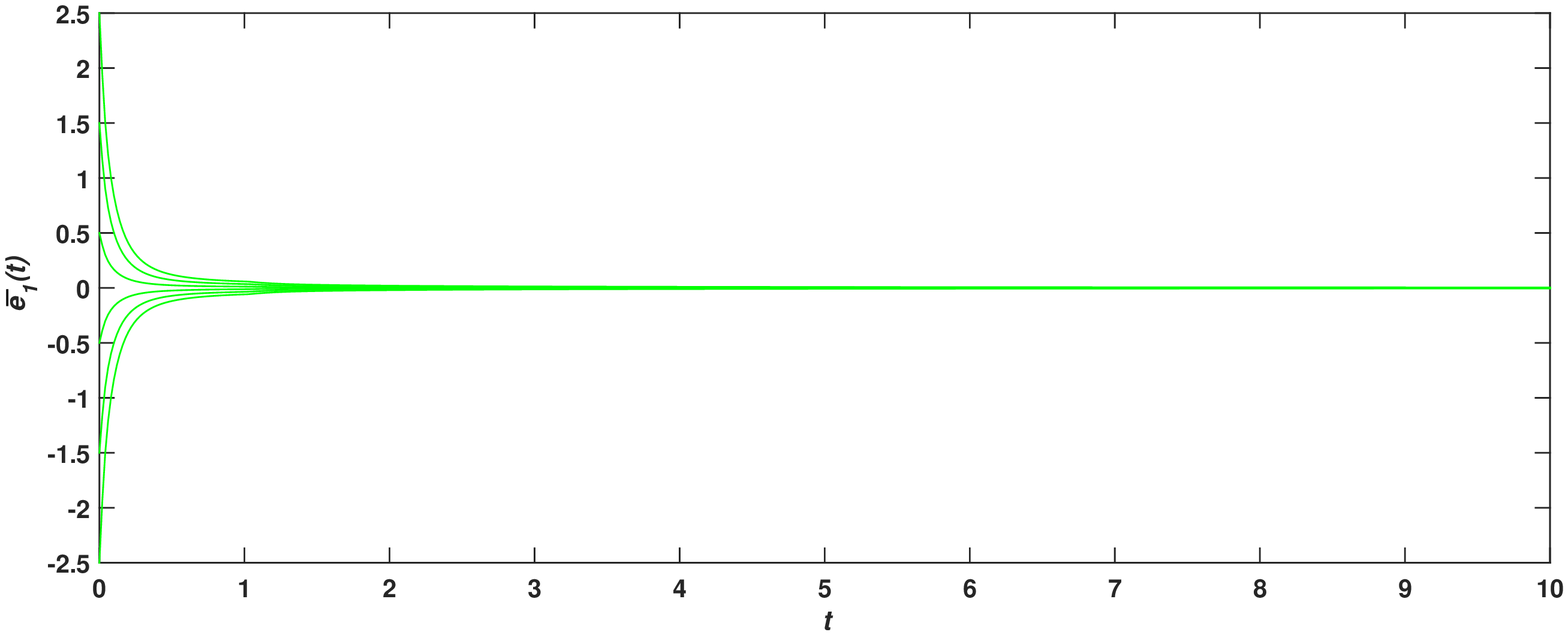}
        \qquad
           \includegraphics[width=8cm, height=4cm]{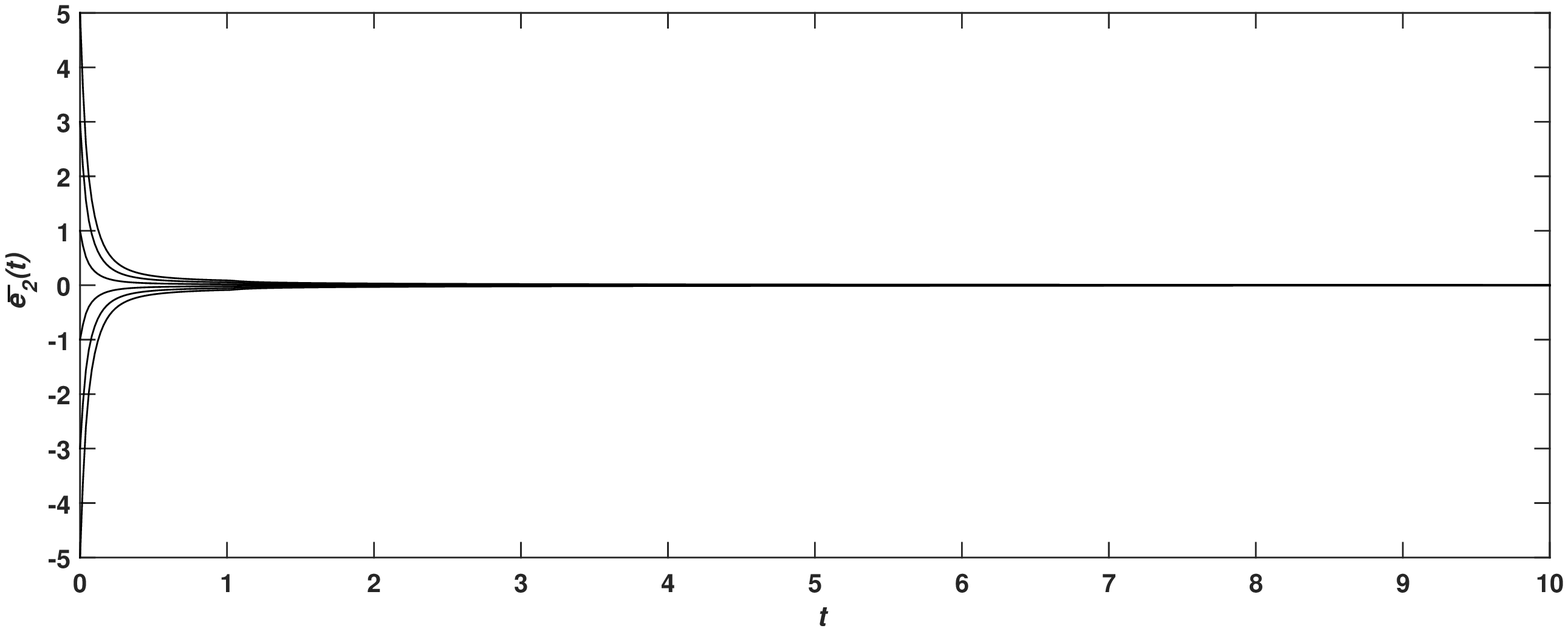}
\caption{Convergence  of  solutions of system (\ref{examp3}) to the stationary state  for various  data.
}
\label{fig9}
\end{figure}
%\newpage
\begin{figure} [h!]\centering
              \includegraphics[width=8cm, height=4cm]{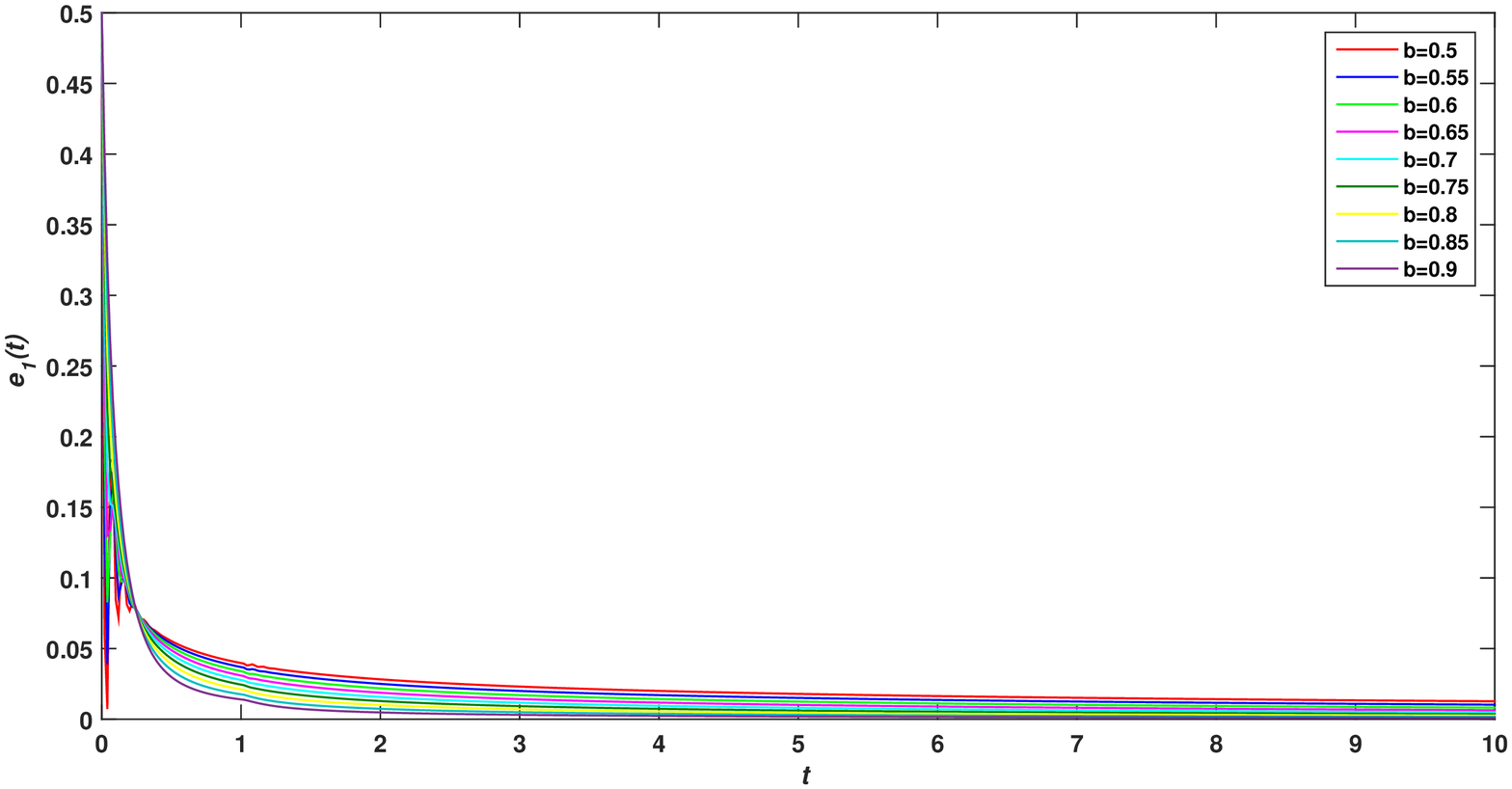}
        \qquad
           \includegraphics[width=8cm, height=4cm]{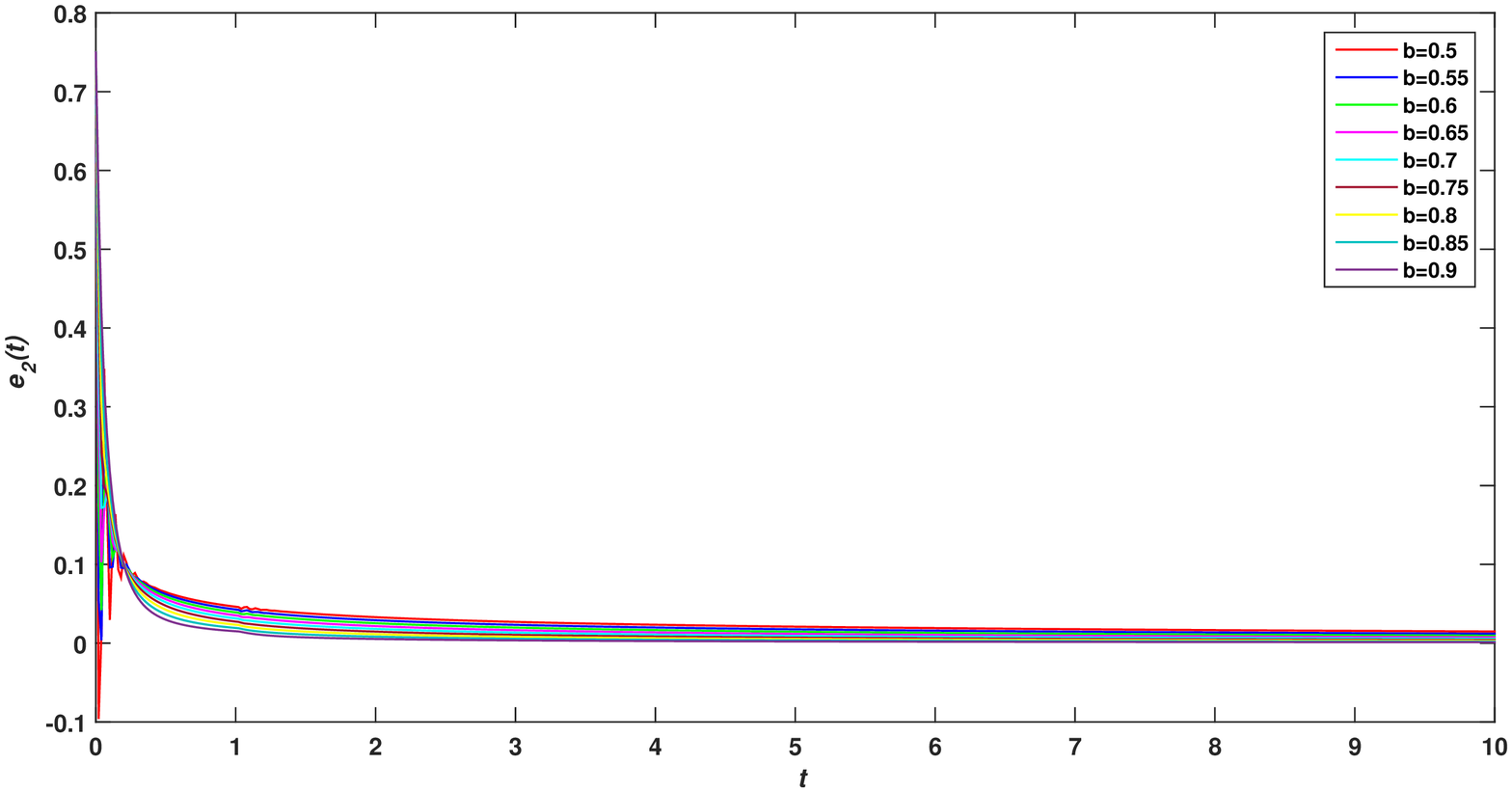}
\\
              \includegraphics[width=8cm, height=4cm]{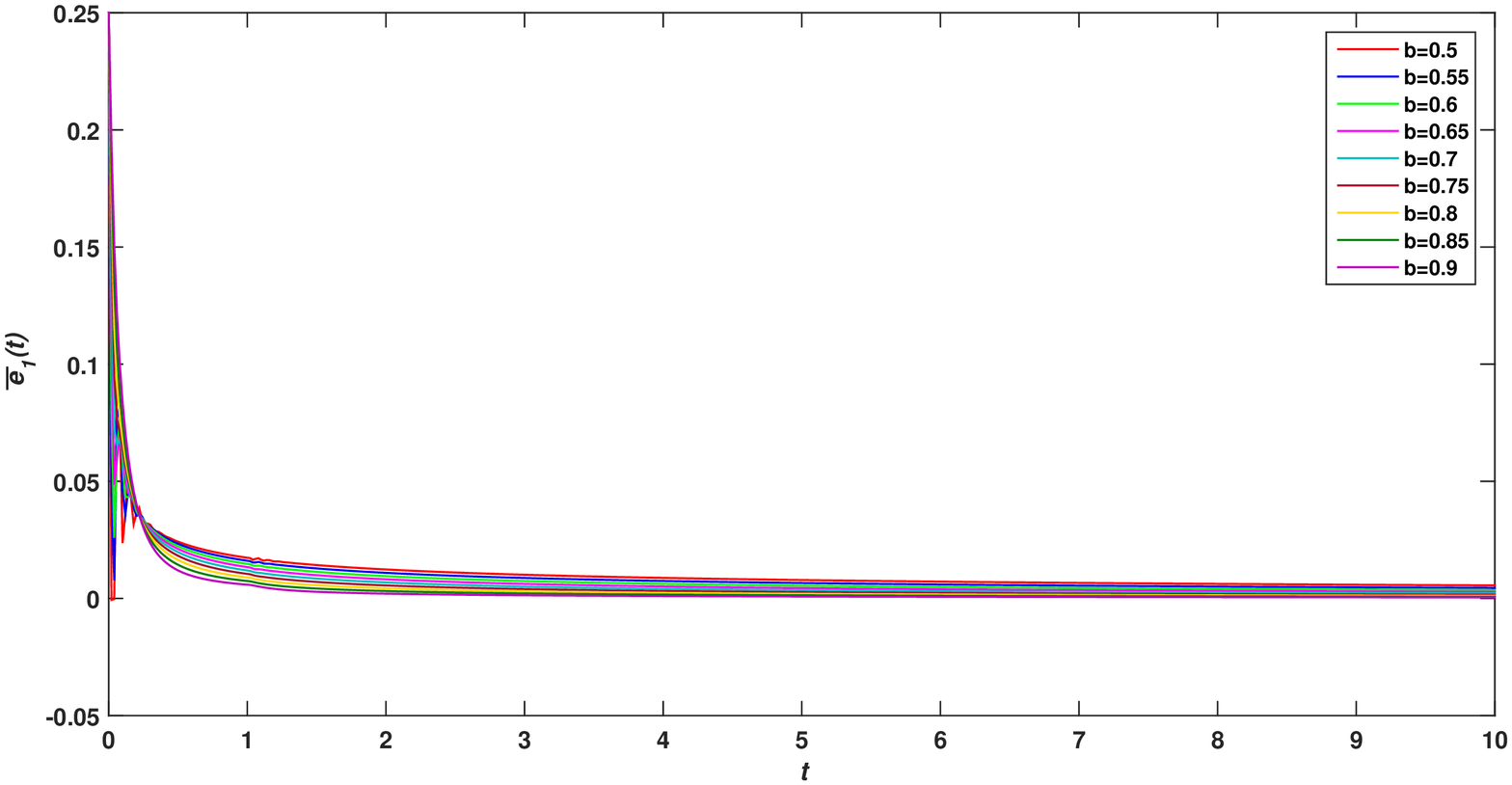}
        \qquad
           \includegraphics[width=8cm, height=4cm]{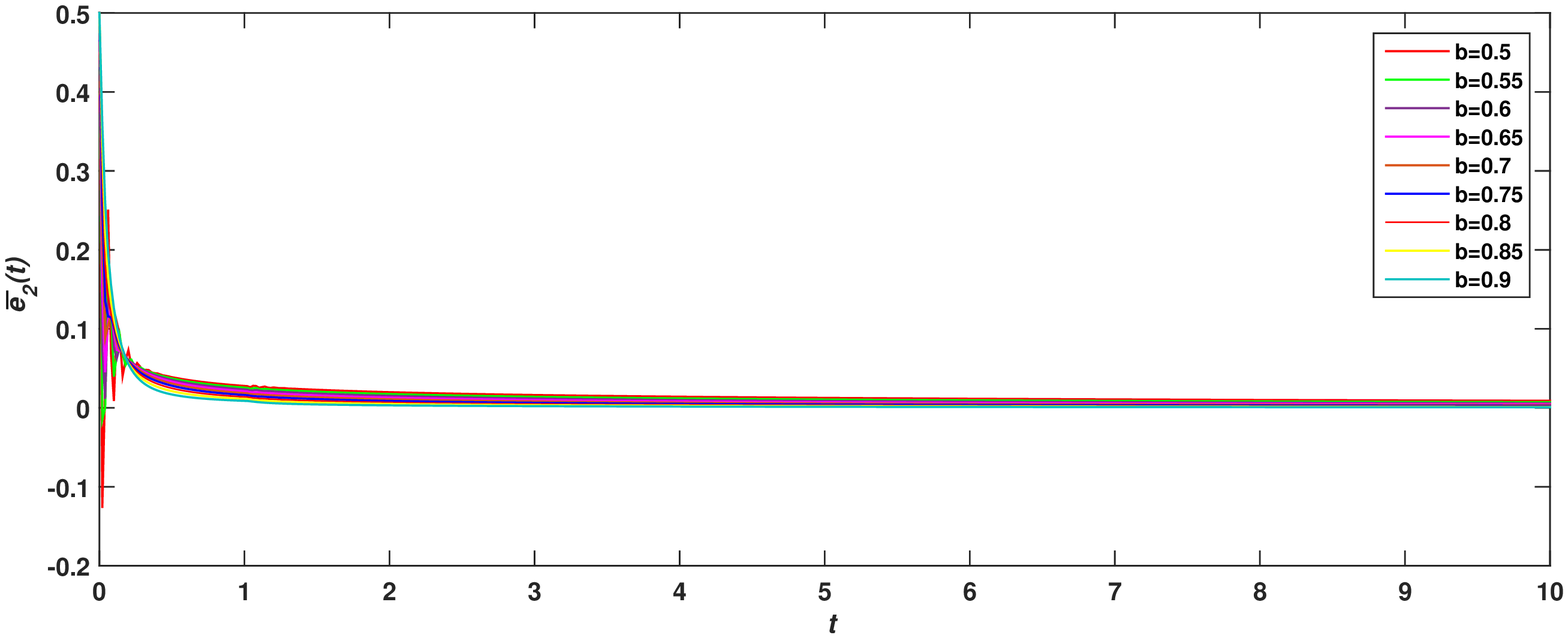}
\caption{Decay  of  solutions of system (\ref{examp3}) to the stationary state  for different values of $b$.
}
\label{fig10}
\end{figure}

\begin{thebibliography}{99}
\bibitem{A25} Y. Cao and C. Bai, Finite-time stability of fractional-order BAM neural networks with distributed delay, Abstr. Appl. Anal. (2014) 634803.

\bibitem{A22}   Y. Cao and  C. Bai, Existence and stability analysis of fractional order BAM neural networks with a time delay, Appl. Math. \textbf{6} (12) (2015)  2057-2068.

    \bibitem{A33} M. Concezzi and R. Spigler, Some analytical and numerical properties of the Mittag-Leffler functions, Fract. Calc. Appl. Anal. \textbf{18} (2015) 64-94.


\bibitem{A8} A. Dembo, O. Farotimi and T. Kailath, High-order absolutely stable neural networks, IEEE Trans. Circ. Syst. \textbf{38} (1) (1991) 57-65.
\bibitem{A21} Z. Ding,  Y. Shen and L. Wang, Leimin, Global Mittag-Leffler synchronization of fractional-order neural networks with discontinuous activations, Neural Networks. \textbf{73} (2016) 77-85.



\bibitem{A11} C. Feng and R. Plamondon, On the stability analysis of delayed neural networks systems, Neural networks. \textbf{14}(9) (2001) 1181-1188.


\bibitem{AA3} A. A. Kilbas,  H. M. Srivastava and J. J. Trujillo, Theory and applications of fractional differential equations, Elsevier, Amsterdam,  (2006).

\bibitem{A9} E. B. Kosmatopoulos,  M. M. Polycarpou, M. A.  Christodoulou and P. A. Ioannou, High-order neural network structures for identification of dynamical systems, IEEE Trans. Neural Networks. \textbf{6} (2) (1995) 422-431.


\bibitem{AA2} V. V. Kulish and J. L. Lage, Application of fractional calculus to fluid mechanics, J. Fluids Eng. \textbf{124} (3) (2002) 803-806.

\bibitem{AA30}  Y. Li, Y. Q. Chen and I. Podlubny, Stability of fractional-order nonlinear dynamic systems: Lyapunov direct method and generalized Mittag--Leffler stability, Comput. Math. Appl. \textbf{59} (5) (2015) 1810-1821.


\bibitem{A12} B. Liu, Global exponential stability for BAM neural networks with time-varying delays in the leakage terms, Nonlinear Anal.: Real World Appl \textbf{14} (1) (2013) 559-566.


\bibitem{A13}  B. Liu, Pseudo almost periodic solutions for neutral type CNNs with continuously distributed leakage delays, Neurocomputing. \textbf{148} (2015) 445-454.

\bibitem{A32} F.  Mainardi,  On some properties of the Mittag-Leffler function $E_{\alpha} (-t^{\alpha})$, completely monotone for $t> 0$ with $0<\alpha< 1$, Discrete Contin. Dyn. Syst. Ser. B \textbf{19} (2014) 2267–2278. E-print:http://arxiv.org/abs/1305.0161.


\bibitem{A30} M. W. Michalski, Derivatives of noninteger order and their applications, Dissertationes Mathematicae, Polska Akademia Nauk, Institut Matematyczny, Warsaw (1993).

\bibitem{A24} Q. Pan and B. Chen, Global uniform asymptotic stability and synchronization for fractional-order BAM neural networks with time delays,  4th International Conference on Information, Cybernetics and Computational Social Systems (ICCSS), IEEE. (2017) 464-469.


\bibitem{A15} L. M. Pecora and  T. L. Carroll, Synchronization in chaotic systems, Phys. Rev. Lett. \textbf{64}(8) (1990) 821-824.

\bibitem{A31}   J.  Qiu and Q. Cheng, Global exponential stability of high-order Hopfield neural networks with time delays, H. Wang et al. (Eds.): The Sixth ISNN, AISC 56, Springer-Verlag, Berlin, Heidelberg (2009) 39-47.

\bibitem{A34} T. Simon, Comparing Fr{\'e}chet and positive stable laws, Electron. J. Probab. \textbf{19} (16) (2014)  25.

\bibitem{AA1} E. Soczkiewicz, Application of fractional calculus in the theory of viscoelasticity, Mol. Quantum. Acoust. \textbf{23} (2002) 397-404.

\bibitem{A27} I. Stamova, G. Stamov, S. Simeonov and A. Ivanov, Mittag-Leffler stability of impulsive fractional-order bi-directional associative memory neural networks with time-varying delays, Trans. Inst. Meas. Control. \textbf{40} (10) (2018) 3068-3077.


\bibitem{A16}  S. H. Strogatz and I. Stewart, Coupled oscillators and biological synchronization, Sci. Am. \textbf{269}(6) (1993) 102-109.


\bibitem{A35} N-E. Tatar, Stability and synchronization of a fractional neutral higher-order neural network system, Int. J. Nonlin. Sci. Num. \textbf{1} (2020).



\bibitem{A26} F. Wang, Y. Yang, X. Xu and L. Li, Global asymptotic stability of impulsive fractional-order BAM neural networks with time delay, Neural. Comput. Appl. \textbf{28} (2) (2017) 345-352.



\bibitem{A28} A. Wu, Z.  Zeng and X. Song, Global Mittag--Leffler stabilization of fractional-order bidirectional associative memory neural networks, Neurocomputing. \textbf{177} (2016) 489-496.

\bibitem{A10} B. Xu,  X. Liu and X. Liao,   Global asymptotic stability of high-order Hopfield type neural networks with time delays, Comput. Math. Appl. \textbf{45} (10-11) (2003) 1729-1737.



\bibitem{A17} T. Yang and L. O. Chua, Impulsive stabilization for control and synchronization of chaotic systems: theory and application to secure communication, IEEE. Trans. Circ. Syst. Fund. Theor. Appl. \textbf{44}(10) (1997) 976-988.

\bibitem{A23}    Z. Yang,  X. Tang and J. Zhang, Global stabilization of a class of fractional-order delayed bidirectional associative memory neural networks, Turk. J. Electr. Eng. Co. \textbf{27} (5) (2019) 3442-3453.


\bibitem{A29} R. Ye, X. Liu, H.  Zhang and J. Cao, Global Mittag-Leffler synchronization for fractional-order BAM neural networks with impulses and multiple variable delays via delayed-feedback control strategy,  Neural Process. Lett. \textbf{49} (1) (2019) 1-18.


\bibitem{A18}  J. Yu, C. Hu, H. Jiang and X. Fan,  Projective synchronization for fractional neural networks, Neural Networks. \textbf{49} (2014) 87-95.

\bibitem{A19} S. Zhang, Y. Yu and H. Wang, Mittag-Leffler stability of fractional-order Hopfield neural networks, Nonlinear Anal. Hybrid Syst. \textbf{16} (2015) 104-121.

\bibitem{A20} S. Zhang,Y. Yu and Q. Wang, Stability analysis of fractional-order Hopfield neural networks
with discontinuous activation functions, Neurocomputing. \textbf{71} (2016) 1075-1084.



\bibitem{A14} C. Zhao and Z. Wang, Exponential convergence of a SICNN with leakage delays and continuously distributed delays of neutral type, Neural Process. Lett. \textbf{41} (2) (2015) 239-247.


\end{thebibliography}
\end{document}